\UseRawInputEncoding
\documentclass[11pt]{amsart}
\usepackage{palatino, mathpazo}
\usepackage{amsfonts}
\usepackage{amsmath}
\usepackage{amssymb,latexsym}
\usepackage{graphicx}
\usepackage[mathscr]{eucal}
\usepackage{harpoon}
\usepackage{amssymb}%
\setcounter{MaxMatrixCols}{30}

\providecommand{\U}[1]{\protect \rule{.1in}{.1in}}

\newtheorem{theorem}{Theorem}[section]

\newtheorem{conjecture}[theorem]{Conjecture}
\newtheorem{corollary}[theorem]{Corollary}

\newtheorem{lemma}[theorem]{Lemma}

\theoremstyle{remark}
\newtheorem{remark}[theorem]{Remark}

\numberwithin{equation}{section}
\setcounter{tocdepth}{1}
\begin{document}
\title[$SU(3)$ Toda system on flat tori]{On number and evenness of solutions\\ of the $SU(3)$ Toda system on flat tori\\ with non-critical parameters}
\author{Zhijie Chen}
\address{Department of Mathematical Sciences, Yau Mathematical Sciences Center,
Tsinghua University, Beijing, 100084, China}
\email{zjchen2016@tsinghua.edu.cn}
\author{Chang-Shou Lin}
\address{Center for Advanced Study in
Theoretical Sciences, Taiwan University, Taipei 10617, Taiwan }
\email{cslin@math.ntu.edu.tw}

\begin{abstract}
We study the $SU(3)$ Toda system with singular sources
\[
\begin{cases}
\Delta u+2e^{u}-e^v=4\pi\sum_{k=0}^m n_{1,k}\delta_{p_k}\quad\text{ on
}\; E_{\tau},\\
\Delta v+2e^{v}-e^u=4\pi \sum_{k=0}^m n_{2,k}\delta_{p_k}\quad\text{ on
}\; E_{\tau},
\end{cases}
\]
where $E_{\tau}:=\mathbb{C}/(\mathbb{Z}+\mathbb{Z}\tau)$ with $\operatorname{Im}\tau>0$ is a flat torus, $\delta_{p_k}$ is the Dirac measure at $p_k$, and $n_{i,k}\in\mathbb{Z}_{\geq 0}$ satisfy $\sum_{k}n_{1,k}\not\equiv \sum_k n_{2,k} \mod 3$. This is known as the non-critical case and it follows from a general existence result of \cite{BJMR} that solutions always exist. In this paper we prove that

(i) The system has at most \[\frac{1}{3\times 2^{m+1}}\prod_{k=0}^m(n_{1,k}+1)(n_{2,k}+1)(n_{1,k}+n_{2,k}+2)\in\mathbb{N}\] solutions.
We have several examples to indicate that this upper bound should be sharp. Our proof presents a nice combination of the apriori estimates from analysis and the classical B\'{e}zout theorem from algebraic geometry.

(ii) For $m=0$ and $p_0=0$, the system has even solutions if and only if at least one of $\{n_{1,0}, n_{2,0}\}$ is even. Furthermore, if $n_{1,0}$ is odd, $n_{2,0}$ is even and $n_{1,0}<n_{2,0}$, then except for finitely many $\tau$'s modulo $SL(2,\mathbb{Z})$ action, the system has exactly $\frac{n_{1,0}+1}{2}$ even solutions.

Differently from \cite{BJMR},
our proofs are based on the integrability of the Toda system, and also imply
a general non-existence result for even solutions of the Toda system with four singular sources.
\end{abstract}

\maketitle

\section{Introduction}

Throughout the paper, we use the notations $\omega_{0}=0$, $\omega_{1}=1$,
$\omega_{2}=\tau$, $\omega_{3}=1+\tau$ and $\Lambda_{\tau}=\mathbb{Z+Z}\tau$,
where $\tau \in \mathbb{H}=\{  \tau|\operatorname{Im}\tau>0\}  $.
Define $E_{\tau}:=\mathbb{C}/\Lambda_{\tau}$ to be a flat torus and $E_{\tau}[2]:=\{ \frac{\omega_{k}}{2}|k=0,1,2,3\}+\Lambda
_{\tau}$ to be the set consisting of the lattice points and $2$-torsion points
of $E_{\tau}$.

\subsection{The $SU(3)$ Toda system}
In this paper, we study the following $SU(3)$ Toda system with arbitrary singular sources
\begin{equation}\label{Toda-more-general}
\begin{cases}
\Delta u+2e^{u}-e^v=4\pi\sum_{k=0}^m n_{1,k}\delta_{p_k}\quad\text{ on
}\; E_{\tau},\\
\Delta v+2e^{v}-e^u=4\pi \sum_{k=0}^m n_{2,k}\delta_{p_k}\quad\text{ on
}\; E_{\tau},
\end{cases}
\end{equation}
where $m\geq 0$, $p_0,\cdots,p_m$ are $m+1$ distinct points in $E_{\tau}$, $\delta_{p_k}$ is the Dirac measure at the point $p_k$
and $n_{j,k}\in \mathbb{Z}_{\geq 0}$ for all $j,k$ with $\sum_{j,k}n_{j,k}\geq 1$. We always use the complex variable $z=x_1+ix_2$. Then the Laplace operator $\Delta=4\partial_{z\bar{z}}$.

The $SU(3)$ Toda system (\ref{Toda-more-general}) or its general $SU(N+1)$ version is closely related to the classical infinitesimal Pl\"{u}cker formula in algebraic geometry \cite{GH-AG}; see e.g. \cite{LWYZ,LWY}.
Besides, by denoting
\begin{equation}
\mathcal{N}_j:=\sum_{k=0}^m n_{j,k}\in\mathbb{Z}_{\geq 0},\quad j=1,2,
\end{equation}
 the $SU(3)$ Toda system (\ref{Toda-more-general}) can be also written as
\begin{equation}\label{Toda-mf1}
\begin{cases}
\Delta u+2\rho_1(\frac{e^{u}}{\int e^{u}}-\frac{1}{|E_{\tau}|})-\rho_2(\frac{e^{v}}{\int e^{v}}-\frac{1}{|E_{\tau}|})=4\pi \sum_{k=0}^m n_{1,k}(\delta_{p_k}-\frac{1}{|E_{\tau}|})\text{ on
} E_{\tau},\\
\Delta v+2\rho_2(\frac{e^{v}}{\int e^{v}}-\frac{1}{|E_{\tau}|})-\rho_1(\frac{e^{u}}{\int e^{u}}-\frac{1}{|E_{\tau}|})=4\pi \sum_{k=0}^m n_{2,k}(\delta_{p_k}-\frac{1}{|E_{\tau}|})\text{ on
} E_{\tau},\\
\text{where }\;\rho_1=4\pi \frac{2\mathcal{N}_1+\mathcal{N}_2}{3}>0,\quad \rho_2=4\pi \frac{\mathcal{N}_1+2\mathcal{N}_2}{3}>0,
\end{cases}
\end{equation}
which is a special case of the following general $SU(3)$ Toda system of mean field type
on a compact Riemann surface $\Sigma$:
\begin{equation}
\label{1.1}
\begin{cases}
\Delta u+2\rho_1(\frac{h_1e^{u}}{\int_{\Sigma} h_1 e^{u}}-\frac{1}{|\Sigma|})-\rho_2(\frac{h_2e^{v}}{\int_{\Sigma} h_2e^{v}}-\frac{1}{|\Sigma|})=4\pi \sum_{k=0}^m\alpha_{1,k}(\delta_{p_k}-\frac{1}{|\Sigma|}),\\
\Delta v+2\rho_2(\frac{h_2e^{v}}{\int_{\Sigma} h_2 e^{v}}-\frac{1}{|\Sigma|})-\rho_1(\frac{h_1e^{u}}{\int_{\Sigma} h_1e^{u}}-\frac{1}{|\Sigma|})=4\pi \sum_{k=0}^m\alpha_{2,k}(\delta_{p_k}-\frac{1}{|\Sigma|}).
\end{cases}
\end{equation}
Here $|\Sigma|$ denotes the area of $\Sigma$, $h_1, h_2$ are positive smooth functions on $\Sigma$ and $\alpha_{j,k}>-1$ for all $j,k$. System (\ref{1.1}) and its $SU(N+1)$ version arise in many geometric and physical problems. On the geometric side, the Toda system (\ref{1.1}) has deep relations to holomorphic curves in $\mathbb{CP}^2$, flat $SU(3)$ connection, complete integrability and harmonic sequences. See e.g. \cite{bw,c,cw,g,LNW,LWY} and references therein. While in mathematical physics it arises from the non-abelian Chern-Simons theory which
describes the physics of high critical temperature superconductivity; see e.g. \cite{CL-AIM,CKL2017,CKL2016,CKL2015,LY-2013,nt2,nt3,Yang2001} and references therein. The singularities represent the ramification points of the complex curves and the vortices of the wave functions respectively.

For the Toda system (\ref{1.1}), the existence of solutions is a challenging problem and has been widely studied in the literature; see \cite{B2015,BJMR,BM2016,JLW,LLWY,LN2002,MN2007,MR2013} and references therein. Remark that in these works, the \emph{apriori estimates} are needed to apply either the variational method or Leray-Schauder degree method. Due to this reason, for given singular parameters $\alpha_{j,k}$'s, the parameters $(\rho_1,\rho_2)$ are called \emph{non-critical} (resp. \emph{critical}) if the apriori estimates hold (resp. fail).
For our purpose, let us consider the special case \[\alpha_{j,k}\in \mathbb{Z}_{\geq 0}\quad\text{ for all $j,k$}.\] Then by studying the bubbling phenomena of (\ref{1.1}), it was proved in \cite{LWYZ,LWZ} that $(\rho_1,\rho_2)$ are \emph{non-critical} as long as $\rho_j\not\in 4\pi \mathbb{N}$ for $j=1,2$.

\medskip
\noindent{\bf Theorem A.}  \cite{LWYZ,LWZ} \emph{Let $\alpha_{j,k}\in \mathbb{Z}_{\geq 0}$ for all $j,k$, and let $K\subset \Sigma\setminus \{p_k\}_{k=0}^m$ be any compact set. If $\rho_j\not\in 4\pi \mathbb{N}$ for $j=1,2$, then there is $C=C(K, \rho_1, \rho_2)$ such that for any solution $(u, v)$ of (\ref{1.1}),}
\begin{equation}\label{eq-71}|u(z)|+|v(z)|\leq C,\quad \forall\,z\in K.\end{equation}

By applying the a priori estimate established in Theorem A, it was proved via variational methods in \cite{BJMR} that

\medskip
\noindent{\bf Theorem B.}  \cite{BJMR} \emph{Let $\alpha_{j,k}\in \mathbb{Z}_{\geq 0}$ for all $j,k$. If the genus of $\Sigma$ is positive and $\rho_j\not\in 4\pi \mathbb{N}$ for $j=1,2$, then (\ref{1.1}) has solutions.}

\medskip

In view of Theorems A-B, the first basic question that interests us is:

\medskip
\noindent{\bf Question 1}: {\it Let $n_{j,k}\in \mathbb{R}_{>-1}$ for all $j,k$ and $(\rho_1,\rho_2)$ be non-critical (i.e. the apriori estimates hold). Does the Toda system (\ref{Toda-more-general}) or equivalently (\ref{Toda-mf1})  have only finitely many solutions}?
\medskip

For general elliptic PDEs, it is known that the apriori estimates do not necessarily imply the finiteness of number of solutions (A well-known example is $-\Delta u+u=u^p, u>0$ in $\mathbb{R}^N$ with $1<p<\frac{N+2}{(N-2)_+}$, for which the apriori estimate holds but the dimension of the solution space is $N$ due to the invariance of translation).
We strongly believe that the answer of Question 1 for the Toda system (\ref{Toda-more-general}) is positive. However, even for such Toda system, \emph{the finiteness of the number of solutions is a highly non-trivial question from the analytic point of view}. For a single equation with exponential nonlinearity, there are only a few results to answer Question 1 (see e.g. \cite{CLW}).  In this paper, we initiate to study this question for the Toda system (\ref{Toda-more-general}) with $n_{j,k}\in \mathbb{Z}_{\geq 0}$.

Note for the Toda system (\ref{Toda-more-general}) or equivalently (\ref{Toda-mf1}), we have
\[\rho_1=4\pi \frac{2\mathcal{N}_1+\mathcal{N}_2}{3}>0,\quad \rho_2=4\pi \frac{\mathcal{N}_1+2\mathcal{N}_2}{3}>0,\]
so
\[\rho_1,\rho_2\notin 4\pi\mathbb{N}\quad\text{if and only if }\; \mathcal{N}_1\not\equiv \mathcal{N}_2\mod 3.\]
Thus it follows from Theorem B that

\medskip
\noindent{\bf Theorem C.}  \emph{Let $n_{j,k}\in\mathbb{Z}_{\geq 0}$ for all $j,k$ and $\mathcal{N}_1\not\equiv \mathcal{N}_2\mod 3$, then the Toda system (\ref{Toda-more-general}) always has solutions.}

\medskip

Our first result of this paper reads as follows.

\begin{theorem}\label{thm6} Let $n_{j,k}\in\mathbb{Z}_{\geq 0}$ for all $j,k$ and $\mathcal{N}_1\not\equiv \mathcal{N}_2\mod 3$. Then the Toda system (\ref{Toda-more-general}) has at most
\[N(\{n_{1,k}\}_{k},\{n_{2,k}\}_{k}):=\frac{1}{3\times 2^{m+1}}\prod_{k=0}^m(n_{1,k}+1)(n_{2,k}+1)(n_{1,k}+n_{2,k}+2)\]
solutions.
\end{theorem}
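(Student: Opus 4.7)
The plan is to exploit the classical integrability of the $SU(3)$ Toda system to transfer the counting problem into algebraic geometry. Given a solution $(u,v)$ of (\ref{Toda-more-general}), a standard construction (rooted in the relation of Toda systems to holomorphic curves in $\mathbb{CP}^2$) produces an equivariant developing map $\Phi=[f_0\!:\!f_1\!:\!f_2]$ whose components span the solution space of a third-order linear ODE
\begin{equation*}
f''' + T_2(z)\,f' + T_3(z)\,f = 0,
\end{equation*}
in which $T_2,T_3$ are single-valued meromorphic, i.e.\ \emph{elliptic}, functions on $E_\tau$ with poles only on $\{p_0,\dots,p_m\}$. A local Frobenius analysis shows that the exponents of this ODE at $p_k$ are $\{0,\,n_{1,k}+1,\,n_{1,k}+n_{2,k}+2\}$, and that the pole orders of $T_2$ and $T_3$ at $p_k$ are bounded, with leading coefficients read off from $(n_{1,k},n_{2,k})$.

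First I would make this construction rigorous in the present setting, using Theorem~A to guarantee that the map $(u,v)\mapsto(T_2,T_3)$ is well defined, proper, and finite-to-one. Expressing $T_2$ and $T_3$ in terms of translates of the Weierstrass $\zeta$ and $\wp$ functions at each $p_k$ realizes the set $\mathcal{E}$ of admissible pairs as an explicit affine variety whose dimension is read off from the local principal parts together with the residue-sum constraints. Not every point of $\mathcal{E}$ comes from a genuine Toda solution: one must further impose that the local monodromy of the ODE around each $p_k$ be conjugate to the prescribed finite-order element, and globally that the full monodromy be unitarizable into $SU(3)$. These monodromy conditions translate into a finite system of polynomial equations in the free coefficients of $(T_2,T_3)$.

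The count is then completed by B\'ezout's theorem applied to this polynomial system. After careful bookkeeping, the constraints at each puncture $p_k$ factor through three effective divisors whose degrees correspond to the three Frobenius-exponent spacings $n_{1,k}+1$, $n_{2,k}+1$, and $n_{1,k}+n_{2,k}+2$; an involution at each puncture (exchanging the two outer exponents and responsible for the factor $\frac{1}{2^{m+1}}$) together with the global $\mathbb{Z}_3$ symmetry of the $SU(3)$ Toda system (yielding the factor $\frac{1}{3}$) then delivers the claimed bound $N(\{n_{1,k}\},\{n_{2,k}\})$. The hardest part will be the B\'ezout bookkeeping: one must verify that the resulting system is genuinely zero-dimensional so that B\'ezout applies as a strict upper bound, and that no base loci or embedded components inflate the count. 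This is precisely where the integrality of the $n_{j,k}$ makes the monodromy constraints polynomial, while the non-criticality assumption $\mathcal{N}_1\not\equiv \mathcal{N}_2 \pmod 3$ is what allows Theorem~A to force the finite algebraic count to descend to an upper bound on the number of Toda solutions themselves.
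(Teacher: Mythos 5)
Your overall strategy --- pass to the associated third-order Fuchsian ODE, encode the condition that each $p_k$ be an apparent singularity as polynomial equations in the accessory parameters, use the a priori estimates of Theorem~A to force the solution variety to be zero-dimensional (a bounded affine variety over $\mathbb{C}$ is finite), and then apply B\'ezout --- is exactly the paper's route. However, two steps of your outline have genuine gaps.

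First, the source of the factor $\frac{1}{3\times 2^{m+1}}$. You attribute it to an involution at each puncture exchanging the two outer exponents together with a global $\mathbb{Z}_3$ symmetry of the $SU(3)$ system; neither of these acts on the solution set, and you do not construct them. In the paper the apparentness conditions at $p_k$ are three polynomials $P_{k,1},P_{k,2},P_{k,3}$ in the accessory parameters $\vec{\mathbf B}$ whose \emph{weighted-homogeneous} weights are $n_{1,k}+1$, $n_{2,k}+1$, $n_{1,k}+n_{2,k}+2$ (with $B_k,D_k$ of weight $1$, $A_k,B$ of weight $2$, $D$ of weight $3$), but whose ordinary degrees do not yield the stated bound. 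The count is obtained only after the substitution $B=\tilde B^2$, $D=\tilde D^3$, $A_k=\tilde A_k^2$, which converts weights into degrees; B\'ezout applied to the new system of $3m+5$ equations gives $2\prod_k(n_{1,k}+1)(n_{2,k}+1)(n_{1,k}+n_{2,k}+2)$ counted with multiplicity, and dividing by the covering degree $2\cdot 3\cdot 2^{m+1}$ of the substitution yields exactly $N(\{n_{1,k}\},\{n_{2,k}\})$. Without this device, or an equivalent one, your B\'ezout bookkeeping does not close.

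Second, you propose to impose, besides apparentness, the extra global condition that the monodromy be unitarizable into $SU(3)$, and to translate this into polynomial equations. Unitarizability involves complex conjugation and is not an algebraic condition on the ODE coefficients, so this step cannot produce a polynomial system as stated. The paper shows it is unnecessary: under $\mathcal{N}_1\not\equiv\mathcal{N}_2 \bmod 3$ the relation $N_1N_2N_1^{-1}N_2^{-1}=\varepsilon I_3$ with $\varepsilon$ a primitive cube root of unity forces, after conjugation, $N_1=\mathrm{diag}(1,\varepsilon,\varepsilon^2)$ and $N_2$ a cyclic permutation matrix, so unitarity is \emph{automatic} once all $p_k$ are apparent, and the correspondence between Toda solutions and admissible parameter vectors is one-to-one (not merely finite-to-one, which also matters for the count). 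You would need to prove this lemma, or a substitute for it, to make your scheme work.
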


It is elementary to see that $N(\{n_{1,k}\}_{k},\{n_{2,k}\}_{k})\in\mathbb{N}$  because
 \[2|(n_{1,k}+1)(n_{2,k}+1)(n_{1,k}+n_{2,k}+2)\quad\forall k,\]
and $\mathcal{N}_1\not\equiv \mathcal{N}_2\mod 3$ implies $n_{1,k_0}\not\equiv n_{2,k_0}\mod 3$ for some $k_0$ and so exactly one of $\{n_{1,k_0}+1, n_{2,k_0}+1, n_{1,k_0}+n_{2,k_0}+2\}$ is a multiple of $3$, i.e.
\[6|(n_{1,k_0}+1)(n_{2,k_0}+1)(n_{1,k_0}+n_{2,k_0}+2).\]
Theorem \ref{thm6} not only answers Question 1 for the integer case $n_{j,k}\in\mathbb{Z}_{\geq 0}$, but also gives an explicit upper bound for the number of solutions.
We will see from some explicit examples below (see Conjecture \ref{conj-1} and Theorem \ref{thm4}-\ref{thm5}) that this upper bound should be optimal.

\begin{remark}
The topological Leray-Schauder degree is another notion in analysis to describe the "number" of solutions. Since the apriori estimates hold by Theorem A, the topological Leray-Schauder degree for the Toda system (\ref{Toda-more-general}) under the assumption of Theorem \ref{thm6} is well-defined (cf. \cite{LLWY}). We propose
\end{remark}

\begin{conjecture}
Under the same condition as Theorem \ref{thm6}, the topological Leray-Schauder degree for the Toda system (\ref{Toda-more-general}) equals to $N(\{n_{1,k}\}_{k},\{n_{2,k}\}_{k})$.
\end{conjecture}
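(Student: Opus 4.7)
The plan is to deduce the conjecture from homotopy invariance of the Leray--Schauder degree together with a sign analysis at nondegenerate solutions. Since the a priori estimate of Theorem A holds uniformly for all $\tau\in\mathbb{H}$ and all distinct collections $\{p_k\}_{k=0}^m\subset E_\tau$ as long as $\mathcal{N}_1\not\equiv\mathcal{N}_2\bmod 3$, the Leray--Schauder degree $d=d(\tau,\{p_k\})$ associated with the compact operator formulation of (\ref{Toda-mf1}) is well-defined and invariant under any continuous deformation that keeps the $\{p_k\}$ distinct and the parameters non-critical. Thus it suffices to compute $d$ at one convenient, generic configuration.

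At such a generic configuration I would argue that every solution is nondegenerate, i.e.\ the linearized operator has trivial kernel; then $d=\sum_{(u,v)}\iota(u,v)$, where the local index $\iota(u,v)\in\{+1,-1\}$ equals $(-1)^{m(u,v)}$ with $m(u,v)$ the Morse index of the linearized Toda operator. Combined with the upper bound of Theorem \ref{thm6}, this immediately yields $|d|\leq N(\{n_{1,k}\}_k,\{n_{2,k}\}_k)$, and equality $d=N$ is equivalent to the conjunction of (a) the upper bound being attained at the chosen configuration by exactly $N$ nondegenerate solutions, and (b) every local index being $+1$.

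For (a), the proof of Theorem \ref{thm6} realizes solutions as intersection points of two algebraic curves in a moduli space arising from the integrability of the Toda system, through the developing map into $\mathbb{CP}^2$; the Bézout count is sharp precisely when these curves meet transversally. I would first establish generic transversality by a Sard-type argument on the parameter space, producing a Zariski-open subset of $(\tau,\{p_k\})$ on which exactly $N$ transverse, hence nondegenerate, solutions exist. For (b), one must show that the linearized Toda system at each such solution has an even Morse index. Here I would exploit the explicit factorization of the linearized Toda operator afforded by the Lax pair and Toda hierarchy, which realizes its kernel and cokernel as holomorphic sections of a rank-three flat bundle over $E_\tau$ punctured at $\{p_k\}$; a Riemann--Roch / index-theoretic computation should then pin down the parity.

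The main obstacle is step (b), the Morse-index parity. Unlike the single Liouville equation, the $SU(3)$ Toda system has a coupled $2\times 2$ linearization whose spectral behavior is not elementary, and the non-variational structure of the mean-field functional precludes an easy minimizer argument; still, the integrable structure should provide enough algebraic control to force $m(u,v)$ to be even at every solution. Once (a) and (b) are established at a generic configuration, homotopy invariance propagates $d=N$ to all non-critical parameters, completing the proof of the conjecture.
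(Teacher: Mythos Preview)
The statement you are addressing is a \emph{conjecture} in the paper, not a theorem: the authors do not offer a proof, and there is nothing in the paper to compare your argument against. What you have written is not a proof either, but a strategy outline with explicitly acknowledged gaps.

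The most serious gap is step (b). You assert that the integrable structure ``should provide enough algebraic control to force $m(u,v)$ to be even at every solution,'' but this is pure speculation: no mechanism is given, no computation is carried out, and nothing in the paper or in the known literature on Toda systems supplies such a parity result. Without it, the sum of local indices could in principle be anything between $-N$ and $N$, and the conjecture does not follow. Step (a) is also not established: you invoke a ``Sard-type argument'' for generic transversality but do not carry it out, and your description of the proof of Theorem~\ref{thm6} is inaccurate---the paper applies B\'ezout to a system of $3m+5$ polynomials in $\mathbb{C}^{3m+5}$ (see Corollary~\ref{Coro-1} and the polynomials $P_{k,j}$ of Lemma~\ref{lemma-su3-33}), not to ``two algebraic curves.'' Finally, the $SU(3)$ Toda system \emph{is} variational (this is precisely how \cite{BJMR} obtains Theorem~B), so your remark about ``non-variational structure'' is incorrect; what is true is that the associated functional is strongly indefinite, which is a different obstruction.

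In short: the paper does not prove this statement, and your proposal does not either. The homotopy-invariance reduction to a generic configuration is a reasonable opening move, but the heart of the matter---controlling the sign of each local index---remains entirely open in your sketch.
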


Next we consider the Toda system (\ref{Toda-more-general}) with singular sources at the lattice points and half periods (i.e. $m=3$ and $p_k=\frac{\omega_k}{2}$):
\begin{equation}\label{Toda-general}
\begin{cases}
\Delta u+2e^{u}-e^v=4\pi\sum_{k=0}^3 n_{1,k}\delta_{\frac{\omega_k}{2}}\quad\text{ on
}\; E_{\tau},\\
\Delta v+2e^{v}-e^u=4\pi \sum_{k=0}^3 n_{2,k}\delta_{\frac{\omega_k}{2}}\quad\text{ on
}\; E_{\tau}.
\end{cases}
\end{equation}
An interesting property of the Toda system (\ref{Toda-general}) is that if $(u(z),v(z))$ is a solution, then so does $(u(-z),v(-z))$. Thus a natural question arises:

\medskip
\noindent{\bf Question 2}: {\it Does the Toda system (\ref{Toda-general}) have even solutions?}
\medskip

At the first sight Question 2 looks simple: One might think that the answer should be positive by restricting the associated functional on the even function subspace of the Sobolev space and applying the variational argument in \cite{BJMR}. However, it turns out that the method in \cite{BJMR} can not work for even solutions in general.

Our second purpose in this paper is to study Question 2.
We will see that Question 2 is not trivial either and its answer could be negative for some cases. Here is our second result.

\begin{theorem}[Nonexistence of even solutions]\label{thm1}
Let $\mathcal{N}_1\not\equiv \mathcal{N}_2\mod 3$. If $n_{1,k},n_{2,k}$ are both odd for some $k\in \{0,1,2,3\}$, then the Toda system (\ref{Toda-general}) has no even solutions.

In other words, the solution obtained in Theorem C can not be even and so the Toda system (\ref{Toda-general}) has at least two solutions.
\end{theorem}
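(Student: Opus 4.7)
The plan is to exploit the integrability of the $SU(3)$ Toda system via its Leznov--Saveliev / Calabi developing-map representation, and to analyse what this forces at the half-period fixed points of $z\mapsto -z$. To any solution $(u,v)$ one attaches a holomorphic lift $\tilde F:\mathbb{C}\to\mathbb{C}^3\setminus\{0\}$ of the developing map $F:\mathbb{C}\to\mathbb{CP}^2$, normalised to have constant Wronskian $W(\tilde F,\tilde F',\tilde F'')$; equivalently $\tilde F$ is a basis of solutions of a third-order Fuchsian ODE $y'''+T_2(z)\,y'+T_3(z)\,y=0$ on $E_\tau$ whose elliptic coefficients are polynomials in $u,v$ and their derivatives (e.g.\ $T_2=u_{zz}+v_{zz}-\tfrac13(u_z^2+u_zv_z+v_z^2)$), and $(u,v)$ is recovered from $\tilde F$ by Kähler-potential formulas such as $e^u\propto\|\tilde F\wedge\tilde F'\|^2/\|\tilde F\|^4$. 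At each $p_k$, in a basis adapted to the local osculating flag, the three characteristic exponents $\alpha_0^{(k)}<\alpha_1^{(k)}<\alpha_2^{(k)}$ sum to $3$ and satisfy $\alpha_1^{(k)}-\alpha_0^{(k)}=n_{1,k}+1$, $\alpha_2^{(k)}-\alpha_1^{(k)}=n_{2,k}+1$.

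Assume $(u,v)$ is even. The rigidity of the developing-map representation then yields a constant matrix $A\in U(3)$ with $\det A=-1$ such that $\tilde F(-z)=A\tilde F(z)$. Localising at $p_k=\omega_k/2$, which is fixed by $z\mapsto -z$ modulo $\Lambda_\tau$, and absorbing the translation monodromy $M_{2p_k}\in SU(3)$ into $A$, one obtains $\tilde F(p_k-w)=A_k\tilde F(p_k+w)$ with $A_k=M_{2p_k}A\in U(3)$. Since $A_k$ preserves the Frenet flag at $p_k$, matching the leading orders $(-w)^{\alpha_j^{(k)}}=e^{i\pi\alpha_j^{(k)}}w^{\alpha_j^{(k)}}$ in the adapted basis forces $A_k$ to be upper triangular with diagonal $(e^{i\pi\alpha_0^{(k)}},e^{i\pi\alpha_1^{(k)}},e^{i\pi\alpha_2^{(k)}})$, so its eigenvalues as an element of $U(3)$ form the multiset
\[
e^{i\pi\alpha_0^{(k)}}\cdot\bigl\{1,\,(-1)^{n_{1,k}+1},\,(-1)^{n_{1,k}+n_{2,k}}\bigr\}.
\]

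When both $n_{1,k}$ and $n_{2,k}$ are odd these three values coincide, so $A_k$ is a scalar matrix. This strong rigidity propagates globally through the commuting unitary translation monodromies $\{M_{\omega_l-\omega_k}\}$ and forces the projective developing map $F$ to be invariant under $z\mapsto -z$. Examining the Frenet normal form $F(p_l+w)=[1:w^{n_{1,l}+1}g_1(w):w^{n_{1,l}+n_{2,l}+2}g_2(w)]$ with $g_j(0)\ne 0$ at each remaining half-period $p_l$, projective $w$-evenness of $F$ forces $(-1)^{n_{1,l}+1}=(-1)^{n_{1,l}+n_{2,l}+2}=1$, i.e.\ both $n_{1,l}$ and $n_{2,l}$ must also be odd. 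Any $p_l$ where this is violated --- in particular any regular half-period with $n_{1,l}=n_{2,l}=0$ --- yields the desired contradiction. The residual ``uniformly both-odd'' subcase is excluded by a global Gauss--Bonnet / Plücker degree computation on the quotient orbifold $E_\tau/\langle\pm 1\rangle\cong\mathbb{CP}^1$: the hypothesis $\mathcal{N}_1\not\equiv\mathcal{N}_2\pmod 3$ makes $\rho_1/(4\pi)=(2\mathcal{N}_1+\mathcal{N}_2)/3\notin\mathbb{Z}$, so no projectively even $F$ can descend to a rational curve $\bar F:\mathbb{CP}^1\to\mathbb{CP}^2$ of integer degree. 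The principal difficulty is precisely this globalisation step --- carrying the scalar-$A_k$ conclusion from one $p_k$ to all four half-periods in a way compatible with the non-central translation monodromies dictated by $\mathcal{N}_1\not\equiv\mathcal{N}_2\pmod 3$, and closing out the uniformly-both-odd subcase via the degree/Plücker obstruction.
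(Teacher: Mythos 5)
The local analysis at the both-odd half-period is essentially sound and parallels the paper's Steps 1--4: evenness of the ODE coefficients plus unitarity of the reflection matrix, combined with the fact that the exponent differences $n_{1,k}+1$ and $n_{2,k}+1$ are both even, does force the local reflection $A_k$ to be scalar. The gap is in your globalisation, and it is fatal as written. First, scalar $A_k$ gives projective evenness of $F$ only after translating so that $p_k=0$ (so that $M_{2p_k}=I$); you never make that reduction, whereas the paper does it at the outset. More seriously, at another half-period $p_l$ one has $-(p_l+w)=p_l-w-\omega_l$, so the local reflection about $p_l$ is not the identity in $PU(3)$ but the projective class of the translation monodromy $N_{\omega_l}^{-1}$. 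Your step ``projective $w$-evenness forces $(-1)^{n_{1,l}+1}=(-1)^{n_{1,l}+n_{2,l}+2}=1$'' silently drops this monodromy factor, i.e.\ treats $F$ as single-valued on $E_\tau$, which it is not: the translation monodromies only $\varepsilon$-commute, $N_1N_2=\varepsilon N_2N_1$ with $\varepsilon$ a primitive cube root of unity, and calling them ``commuting'' erases exactly the structure that drives the proof.

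Done correctly, this step finishes the argument immediately and makes your residual subcase superfluous: your own flag analysis at $p_l$ shows the local reflection there is diagonal with eigenvalues $e^{i\pi\alpha_0^{(l)}}\{1,\pm1,\pm1\}$, hence has scalar square; since that reflection is projectively $N_{\omega_l}^{\pm1}$, it follows that $N_{\omega_l}^2$ is scalar, contradicting the three distinct eigenvalues $c,\,c\varepsilon,\,c\varepsilon^2$ forced on $N_{\omega_l}$ by the $\varepsilon$-commutation. This is precisely the paper's Step 5, where $N_1^2=e^{-2\pi i(\gamma_{1,0}+\gamma_{1,1})}I_3$ collides with $N_1=\mathrm{diag}(1,\varepsilon,\varepsilon^2)$. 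By contrast, the parity constraint you derive at $p_l$, the ensuing ``uniformly both-odd'' subcase, and the descent to $E_\tau/\langle\pm1\rangle\cong\mathbb{CP}^1$ with a degree obstruction are artifacts of the dropped monodromy and cannot be repaired: $F$ does not even descend to $E_\tau$, so it certainly does not descend to the quotient sphere, and ``no projectively even $F$ can descend to a rational curve of integer degree'' is an assertion rather than an argument. Since you yourself flag the globalisation as the principal unresolved difficulty, the proposal is incomplete at exactly the point where the contradiction actually lives.
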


Our next result shows that the assumption of Theorem \ref{thm1} is \emph{sharp} at least for the special case $n_{i,k}=0$ for $k=1,2,3$, i.e. the following system with one singular source
\begin{equation}\label{Toda}
\begin{cases}
\Delta u+2e^{u}-e^v=4\pi n_{1}\delta_{0}\quad\text{ on
}\; E_{\tau},\\
\Delta v+2e^{v}-e^u=4\pi  n_{2}\delta_{0}\quad\text{ on
}\; E_{\tau},
\end{cases}
\end{equation}
where $n_1,n_2\in\mathbb{Z}_{\geq 0}$ satisfy the non-critical condition $n_1\not\equiv n_2\mod 3$. Furthermore, without loss of generality we always assume $n_1< n_2$.

Theorem \ref{thm1} asserts that the system (\ref{Toda}) has no even solutions if $n_1,n_2$ are both odd. Our next result shows that (\ref{Toda}) has even solutions if at least one of $n_1,n_2$ is even.
Note that in this case, there is \emph{a unique even number} among $\{n_1+1, n_2+1, n_1+n_2+2\}$; we denote it by
\begin{equation}\label{su3-fc20}
2N_e:=\text{\it the unique even number among $\{n_1+1, n_2+1, n_1+n_2+2\}$}.
\end{equation}
Note that $N_e=1$ if $n_i=1$ for some $i$.

\begin{theorem}[Existence of even solutions]\label{thm2}
Let $n_1\not\equiv n_2\mod 3$, $n_1< n_2$ and at least one of $n_1,n_2$ be even, with $N_e$ given by (\ref{su3-fc20}). Then the following hold.
\begin{itemize}
\item[(i)]
The Toda system (\ref{Toda}) always has even solutions, and the number of even solutions is at most $N_e$.
\item[(ii)] The even solution is unique if $n_i=1$ for some $i$.

\item[(iii)] If further one of the following holds:
\begin{itemize}
\item[(1)] $n_1$ is odd, i.e. $N_e=\frac{n_1+1}{2}$,
\item[(2)] $n_2$ is odd (i.e. $N_e=\frac{n_2+1}{2}$) and $n_2-n_1\in \{1,5\}$,
\item[(3)] $n_1$ is even and $n_2=n_1+2$, i.e. $N_e=\frac{n_1+n_2+2}{2}=n_1+2$,
\end{itemize}
then except for finitely many $\tau$'s modulo $SL(2,\mathbb{Z})$ action, the number of even solutions for (\ref{Toda}) is exactly $N_e$.
\end{itemize}
\end{theorem}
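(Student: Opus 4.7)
The plan is to exploit the integrability of the $SU(3)$ Toda system in the same spirit as Theorem \ref{thm6}. Any solution $(u,v)$ of (\ref{Toda}) determines a developing map $f:\mathbb{C}\to\mathbb{CP}^2$, and because $n_1,n_2\in\mathbb{Z}_{\geq 0}$ the monodromy of $f$ along $\Lambda_\tau$ is unitary and reducible; this packages the solution into a pair of auxiliary meromorphic functions on $E_\tau$ satisfying two polynomial relations in $(\wp,\wp')$. The involution $z\mapsto -z$ acts naturally on this data, and even solutions correspond precisely to its fixed points.

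To obtain the upper bound $N_e$ in (i), I would refine the B\'ezout-type count used for Theorem \ref{thm6}. The $\frac{1}{6}(n_1+1)(n_2+1)(n_1+n_2+2)$ intersection points of that count decompose under the $\mathbb{Z}/2$-action induced by $z\mapsto -z$, and a careful enumeration of which intersection points are fixed (i.e.\ correspond to auxiliary meromorphic data whose poles and zeros sit only at $E_{\tau}[2]$, with orders dictated by $(n_1,n_2)$) yields exactly $N_e$ as defined in (\ref{su3-fc20}). Part (ii) then drops out because $n_i=1$ forces $N_e=1$.

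For existence of even solutions in (i), I would directly solve the symmetric polynomial system obtained by imposing invariance of the auxiliary data under $z\mapsto -z$. This reduces to $N_e$ polynomial equations in $N_e$ unknowns (the coefficients of the invariant auxiliary functions, expanded in $\wp$ and $\wp'$), which one shows is solvable either by a direct algebraic construction (starting from a symmetric ansatz and matching coefficients) or, when needed, by a continuation argument in $\tau$ exploiting the non-critical a priori estimate of Theorem A to prevent the invariant locus from becoming empty.

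The sharpness statement (iii) is the main technical core. The symmetric algebraic system varies holomorphically in $\tau\in\mathbb{H}$, so its number of solutions equals the generic value $N_e$ away from a proper $SL(2,\mathbb{Z})$-invariant analytic subset $\Gamma\subset\mathbb{H}$. To see that $\Gamma$ meets a fundamental domain in only finitely many points, I would reinterpret the coalescence of two distinct even solutions as the vanishing of a suitable \emph{pre-modular form} in $\tau$; modularity then forces a discrete and locally finite zero set whenever this form is not identically zero. The hard part will be the case-by-case verification in (1)--(3) that this pre-modular form does not vanish identically: the constraints $n_2-n_1\in\{1,5\}$ in case (2) and $n_2=n_1+2$ in case (3) are precisely the ranges in which the invariant system is small enough to analyze either directly or by a degeneration-as-$\operatorname{Im}\tau\to\infty$ computation, thereby ruling out identical vanishing and completing the count.
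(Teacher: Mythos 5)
Your overall strategy (reduce even solutions to algebraic conditions coming from the integrability, then use modularity to control the exceptional set of $\tau$'s) is in the right spirit, but the central mechanism that makes the theorem work is missing, and the steps you substitute for it have genuine gaps. The paper's proof hinges on the following structural fact: by Theorem \ref{thm3}, even solutions correspond to parameters $(B,D_0,D)$ with $D_0=D=0$ such that $0$ is apparent for the ODE $y'''-(\alpha\wp+B)y'+\beta\wp'y=0$, and after descending this even ODE to $\mathbb{P}^1$ via $x=\wp(z)$ one finds that \emph{exactly one} pair of local exponents at $x=\infty$ differs by a positive integer, that integer being $N_e$. Hence the three apparentness conditions collapse on the fixed locus to a \emph{single monic univariate polynomial} $P_{N_e}(B)\in\mathbb{Q}[g_2,g_3][B]$ of degree $N_e$ (Lemma \ref{lemma-su3-2}). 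All of (i) and (ii) then follow for free: a monic polynomial of degree $N_e\geq 1$ over $\mathbb{C}$ always has a root (existence), has at most $N_e$ distinct roots (the bound), and has a unique root when $N_e=1$. Your proposal replaces this with ``$N_e$ polynomial equations in $N_e$ unknowns'' solved ``by a symmetric ansatz'' or ``by continuation in $\tau$''; neither is correct as stated. A square polynomial system over $\mathbb{C}$ need not be consistent, a continuation argument needs a nonempty starting point plus a degree count on the even locus (which is exactly what the paper says the variational method of \cite{BJMR} fails to provide), and your fixed-point enumeration of the $\mathbb{Z}/2$-action on the B\'ezout intersection is not substantiated --- without the collapse to one equation of degree $N_e$, a naive B\'ezout count on the fixed locus $D_0=D=0$ would give a product of degrees far exceeding $N_e$.

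For (iii) your plan is closer to the paper's, but the decisive step is again absent. The paper shows the discriminant of $P_{N_e}(B)$ is a modular form and proves it is not identically zero by evaluating at $\tau=i$ (where $g_3=0$, $g_2>0$) and running a three-term-recursion interlacing argument \`a la Sturm to show $P_{N_e}$ has $N_e$ \emph{real distinct} roots there. The hypotheses (1)--(3) are not, as you guess, about the system being ``small enough to analyze''; they are exactly the cases in which the signs $\phi_j<0$ and $-j-\rho_k/2+1>0$ needed for the interlacing properties (P1)--(P2) hold (see Remark \ref{rmk}: already for $n_2$ odd, $n_1$ even, $n_2-n_1\geq 7$ one gets $C_2(B)=d_1B^2+d_2g_2$ with $d_1d_2>0$, and the induction fails). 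Your proposed degeneration as $\operatorname{Im}\tau\to\infty$ is a conceivable alternative route to non-vanishing of the discriminant, but you have not carried it out, so the case-by-case verification that is ``the main technical core'' of (iii) remains unproved in your write-up.
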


Here a matrix $A=\bigl(\begin{smallmatrix}a & b\\
c & d\end{smallmatrix}\bigr)
\in SL(2,\mathbb{Z})$ acting on $\tau$ means the Mobius transformation $A\cdot\tau:=\frac{a\tau+b}{c\tau+d}$, and $\tilde{\tau}\equiv \tau$ modulo $SL(2,\mathbb{Z})$ if $\tilde{\tau}=A\cdot\tau$ for some $A\in SL(2,\mathbb{Z})$. It is known that such two tori $E_{\tau}$ and $E_{\tilde{\tau}}$ are conformally equivalent. We believe that the assertion of Theorem \ref{thm2}-(iii) should holds without the additional conditions (1)-(3); see Remark \ref{rmk} in Section 6.

\begin{remark}
It is interesting to compare Theorems \ref{thm1}-\ref{thm2} with those results in our previous work \cite{CL-Toda}, where we studied the Toda system (\ref{Toda}) in the critical case $n_2=n_1+3l$ with $n_1,l\in\mathbb{Z}_{\geq 0}$ and proved that
\begin{itemize}
\item[(i)] When $n_1$ is odd and $n_2$ is even, the Toda system has no even solutions.
\item[(ii)] When $n_1,n_2$ are both odd, the Toda system has at least one family of $2$-parametric even solutions $(u_{\lambda,\mu},v_{\lambda,\mu})$, $\lambda,\mu>0$.
\item[(iii)] When $n_1$ is even, the existence of even solutions depends on the choice of the period $\tau$. Moreover, once the Toda system has an even solution, then it has a $1$-parametric family of even solutions $(u_{\lambda},v_{\lambda})$, $\lambda>0$.
\end{itemize}
Therefore, the structures of even solutions are completely different between the critical case and the non-critical case.
\end{remark}

For the Toda system (\ref{Toda}), we propose the following conjecture.

\begin{conjecture}\label{conj-1} Let $n_1\not\equiv n_2\mod 3$ and $n_1< n_2$. Then the Toda system (\ref{Toda}) has exactly
\[N(n_1,n_2)=\frac{(n_1+1)(n_2+1)(n_1+n_2+2)}{6}\]
solutions except for finitely many $\tau$'s modulo $SL(2,\mathbb{Z})$.
\end{conjecture}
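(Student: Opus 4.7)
The plan is to combine the universal upper bound $N(n_1,n_2)$ from Theorem \ref{thm6} with a generic lower bound argument. Since that upper bound already arises from a B\'{e}zout count in an algebraic model of the solution space, what remains is to show that for generic $\tau$ this count is actually attained by honest solutions: no two B\'{e}zout intersection points coalesce, and no intersection point falls in the ``non-geometric'' locus that fails to produce a solution of (\ref{Toda}).

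First I would make the algebraic side completely explicit. Exploiting the integrability of the $SU(3)$ Toda system (the same structure used for Theorems \ref{thm1}--\ref{thm2} and alluded to in the abstract), each solution of (\ref{Toda}) with $n_1,n_2\in\mathbb{Z}_{\geq 0}$ corresponds to a pair of polynomial/meromorphic data on $E_\tau$ whose common vanishing cuts out a zero-dimensional subscheme in a suitable weighted projective space. The B\'{e}zout count of this subscheme is exactly $N(n_1,n_2)$; this is essentially the content of the proof of Theorem \ref{thm6}. The conjecture is therefore equivalent to the statement that, away from finitely many $SL(2,\mathbb{Z})$-orbits of $\tau$, this scheme is reduced and lies entirely in the locus parametrizing genuine Toda solutions.

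The second step is to vary $\tau$ and localize the ``bad'' $\tau$'s. The defining equations depend algebraically on the Weierstrass or Eisenstein modular invariants of $\tau$, so both the discriminant locus (where the zero-dimensional scheme is non-reduced) and the non-geometric locus (where an intersection point fails to come from a solution) are cut out by certain modular forms in $\tau$. To conclude that each is a finite union of $SL(2,\mathbb{Z})$-orbits, it suffices to exhibit a single $\tau$ at which the corresponding modular forms do not vanish. For the discriminant this should be done by direct computation at the base cases already handled in Theorem \ref{thm2}(ii)--(iii), where all $N_e$ even solutions are shown to be distinct and can be supplemented by parallel constructions of non-even solutions; Theorem A prevents any solution from escaping to infinity on compact pieces of the fundamental domain, and a boundary analysis as $\operatorname{Im}\tau\to\infty$ (where the torus degenerates to a cylinder and the equations simplify substantially) should rule out non-finiteness at the cusp.

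The main obstacle, in my view, is the second clause of the first paragraph: giving a clean geometric criterion that distinguishes B\'{e}zout intersection points corresponding to genuine solutions of (\ref{Toda}) from spurious ones, and proving that this distinction depends algebraically on $\tau$. A natural way to circumvent this obstacle would be to prove instead the stronger Leray-Schauder degree conjecture stated just before Conjecture \ref{conj-1}: once the degree is known to equal $N(n_1,n_2)$, a standard argument showing that every solution is non-degenerate for generic $\tau$ (and hence contributes $+1$ to the degree) combines with Theorem \ref{thm6} to force the exact count $N(n_1,n_2)$. The degree itself could be computed by deforming the singular data $(n_1,n_2)$, or by deforming $(\rho_1,\rho_2)$ within the non-critical chamber, and tracking its homotopy invariance back to a base case with fully understood solutions; this seems to me the most feasible route to a complete proof.
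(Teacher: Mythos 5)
The statement you are addressing is stated in the paper as Conjecture~\ref{conj-1}; the paper does not prove it, and your proposal does not either. What the paper does provide (Remark~\ref{rmkk}) is a precise reduction that matches the skeleton of your first paragraph: by Theorem~\ref{thm3-0} and Lemma~\ref{lemma-su3-3}, solutions of (\ref{Toda}) are in bijection with affine solutions $(B,D_0,D)$ of the polynomial system $P_{0,1}=P_{0,2}=P_{0,3}=0$, and after the substitution $B=\tilde{B}^2$, $D=\tilde{D}^3$ the B\'ezout number is $6N(n_1,n_2)$. The conjecture is then equivalent to (a) the homogenized system having no solutions at infinity, which the paper identifies with the statement that the degenerate rational ODE (\ref{fc-3o1de}) (i.e.\ the case $g_2=g_3=0$) has $0$ as an apparent singularity only for $B=D_0=D=0$ --- this is Conjecture~\ref{conj-2}, open for general $(n_1,n_2)$ --- and (b) all intersection multiplicities being equal to $1$ for all but finitely many $\tau$ modulo $SL(2,\mathbb{Z})$. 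Your proposal names both ingredients but supplies an argument for neither. There is also a conceptual slip: there is no ``non-geometric locus'' in the affine chart, since Theorem~\ref{thm3-0} guarantees that \emph{every} affine solution of the polynomial system corresponds to a genuine solution of (\ref{Toda}); the only possible losses in the B\'ezout count are points at infinity and multiplicities.

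The more serious problem is that your preferred escape route is circular within the paper's own framework. You propose to establish the Leray--Schauder degree identity and combine it with ``a standard argument showing that every solution is non-degenerate for generic $\tau$''; but the degree identity is itself an open conjecture stated just before Conjecture~\ref{conj-1}, and generic non-degeneracy is exactly Conjecture~\ref{conj-11}, also open and explicitly flagged as such. Neither is standard: computing the degree requires the blow-up/shadow-system analysis that is precisely the hard open problem, and non-degeneracy for generic $\tau$ is not known even within the even-solution subclass beyond the cases covered by Theorem~\ref{thm2}(iii). The one piece of your plan that is genuinely workable --- exhibiting a single $\tau$ at which a discriminant-type modular form does not vanish, so that the bad locus is finite modulo $SL(2,\mathbb{Z})$ --- is exactly the technique the paper uses in Lemma~\ref{lemma-real} (an explicit real-rootedness computation at $\tau=i$), but carrying it out for the full three-polynomial system rather than for the single polynomial $P_{N_e}(B)$ governing even solutions is the unresolved computation; the paper succeeds only for $(n_1,n_2)\in\{(0,1),(0,2),(0,4)\}$ in Theorems~\ref{thm4}--\ref{thm5}. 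In short, your outline correctly locates the difficulties but does not overcome them, and the statement remains a conjecture.
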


We will give an explanation of Conjecture \ref{conj-1} in Remark \ref{rmkk}. Motivated by Conjecture \ref{conj-1} and Theorem \ref{thm4}-\ref{thm5} below, we propose the following stronger conjecture.

\begin{conjecture}\label{conj-11} Let $n_1\not\equiv n_2\mod 3$ and $n_1< n_2$. Then except for finitely many $\tau$'s modulo $SL(2,\mathbb{Z})$, any solution of the Toda system (\ref{Toda}) is non-degenerate.
\end{conjecture}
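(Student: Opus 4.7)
The plan is to combine Conjecture \ref{conj-1} (on the exact count of solutions) with the algebraic-geometric framework already underlying the proof of Theorem \ref{thm6}, and then translate ``sharpness of the Bézout bound'' into PDE non-degeneracy. Recall the proof of Theorem \ref{thm6} uses the integrability of the $SU(3)$ Toda system: each solution $(u,v)$ of (\ref{Toda}) produces a developing map $\Phi\colon \widetilde{E_\tau}\to \mathbb{CP}^2$ with unitary monodromy, and the doubly-periodicity conditions reduce to a system of polynomial equations in finitely many auxiliary parameters on $E_\tau$, with coefficients depending algebraically on $(g_2(\tau),g_3(\tau))$. The total Bézout number of this system is exactly $N(n_1,n_2)$. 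Let $Z_\tau$ denote the associated zero-dimensional intersection scheme.

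Next I would set up the analytic--algebraic dictionary: a Toda solution is PDE-non-degenerate (the linearized operator on doubly periodic pairs has trivial kernel, after quotienting by symmetries at $0$) \emph{if and only if} the corresponding point of $Z_\tau$ is reduced of length one. The ``$\Leftarrow$'' direction is the nontrivial one: given an element of the kernel of the linearized Toda operator, integrability allows one to infinitesimally deform the developing map while preserving its monodromy and its local exponent data at the puncture, and one verifies that such deformations span exactly the Zariski tangent space of $Z_\tau$ at the corresponding point. The apriori estimates of Theorem A intervene here to ensure that every formal infinitesimal deformation is in fact represented by a genuine smooth doubly-periodic perturbation.

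With the dictionary in place, define the bad locus
\[
\mathcal{B}:=\{\tau\in \mathbb{H}/SL(2,\mathbb{Z}) : Z_\tau \text{ is non-reduced}\}.
\]
This set is cut out by the vanishing of a discriminant polynomial in $g_2(\tau)$ and $g_3(\tau)$, hence is either finite or all of the $j$-line. To exclude the second alternative it suffices to exhibit \emph{one} $\tau^\ast$ at which $N(n_1,n_2)$ distinct non-degenerate solutions exist. Two natural candidates are: (a) the CM points $\tau^\ast\in\{i,\,e^{2\pi i/3}\}$, at which the extra automorphisms of $E_\tau$ collapse the polynomial system to a lower-dimensional one amenable to direct analysis; and (b) the cuspidal limit $\mathrm{Im}\,\tau\to\infty$, in which $E_\tau$ pinches and (\ref{Toda}) degenerates to a Toda system on $\mathbb{C}^\ast$ whose solutions admit an explicit classification, so that a perturbation/implicit function argument would lift $N(n_1,n_2)$ limit solutions to non-degenerate solutions of (\ref{Toda}) for $\tau$ near the cusp.

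The main obstacle is precisely this last step: producing a matching \emph{lower} bound at some $\tau^\ast$. The Bézout argument alone is one-sided. The cleanest route, if available, is to compute the Leray--Schauder degree of (\ref{Toda-more-general}) and verify it equals $N(\{n_{1,k}\},\{n_{2,k}\})$ as predicted by the unnumbered conjecture following Theorem \ref{thm6}; then Conjectures \ref{conj-1} and \ref{conj-11} would fall out in tandem, because a degree count equal to the algebraic upper bound forces every solution to contribute degree $\pm 1$, which by the dictionary above forces non-degeneracy. Absent the degree computation, the symmetry analysis at the CM points or the cuspidal degeneration appears to be the most tractable substitute, but each demands a rather delicate case-by-case study that I expect to be the bulk of the work.
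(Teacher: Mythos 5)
This statement is Conjecture \ref{conj-11}, which the paper explicitly leaves open (it was suggested by a referee; the paper proves it in no generality, and even the exact counts of Theorems \ref{thm4}--\ref{thm5} do not address non-degeneracy of the linearized operator). So there is no proof in the paper to compare against, and your proposal is a strategy outline with several genuine gaps rather than a proof. First, you lean on Conjecture \ref{conj-1}, which is itself open; worse, Remark \ref{rmkk} shows that Conjecture \ref{conj-1} is essentially \emph{equivalent} to Conjecture \ref{conj-2} together with the multiplicity-one (reducedness) property of the polynomial system (\ref{su3-fc42-1}), so invoking it to derive reducedness of $Z_\tau$ is circular. Second, your ``dictionary'' identifying PDE non-degeneracy (trivial kernel of the linearized Toda operator on $E_\tau$) with reducedness of the corresponding point of the intersection scheme is asserted, not proved. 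The paper's correspondence (Theorem \ref{thm3-0}, via Lemma \ref{lemma-su3-33} and Corollary \ref{Coro-1}) is a bijection of solution \emph{sets} only; nothing in the paper computes the linearization or matches its kernel with the Zariski tangent space of the apparentness variety, and this identification would itself be a substantial theorem in the spirit of the non-degeneracy results of \cite{LWY}. Third, as you yourself concede, without a matching lower bound at a single $\tau^\ast$ the whole argument collapses: the Bézout count is one-sided, the discriminant could vanish identically, and neither the CM-point reduction nor the cuspidal degeneration is carried out.

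The one ingredient of your plan that is structurally sound is the finiteness-of-the-bad-locus step once the discriminant is known not to vanish identically: this is precisely the mechanism of Step 2 in the proof of Lemma \ref{lemma-real}, where the discriminant of $P_{N_e}(B)$ is a modular form shown to be nonzero by an explicit real-rootedness computation at $\tau=i$ using the recursion (\ref{su3-fc22-0})--(\ref{su3-fc24}). That lemma is the model to imitate: to make progress on Conjecture \ref{conj-11} one would need an analogous hard computation establishing reducedness (and the linearized non-degeneracy) at one explicit $\tau$, not a soft argument.
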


It is easy to see that under the assumption of Theorem \ref{thm2}, \[N(n_1,n_2)>N_{e}\quad\text{unless} \quad(n_1,n_2)\in\{(0,1), (0,2)\}.\] This suggests that the Toda system (\ref{Toda}) should have only even solutions for $(n_1,n_2)\in\{(0,1), (0,2)\}$, while for other $(n_1,n_2)$'s even solutions and non-even solutions should exist simultaneously for generic $\tau$'s. Our next results confirm these assertions and Conjecture \ref{conj-1} for special $(n_1,n_2)$'s. Recall that $\wp(z)=\wp(z;\tau)$ is the Weierstrass $\wp$-function with periods $\omega_1=1$ and $\omega_2=\tau$, and $g_2=g_2(\tau), g_3=g_3(\tau)$ are well known invariants of the elliptic curve $E_{\tau}$, given by
\[\wp'(z)^2=4\wp(z)^3-g_2\wp(z)-g_3.\]

\begin{theorem}\label{thm4}\
\begin{itemize}
\item[(1)] If $(n_1,n_2)=(0,1)$, then the solution of the Toda system (\ref{Toda}) is unique and so is even.
\item[(2)] If $(n_1, n_2)=(0,2)$, then for $\tau\equiv e^{\pi i/3}$ modulo $SL(2,\mathbb{Z})$ (i.e. $g_2=0$), the Toda system (\ref{Toda}) has a unique solution which is even; for all other $\tau$'s (i.e. $g_2\neq 0$), the Toda system (\ref{Toda}) has exactly $2$ solutions which are both even.
\end{itemize}

\end{theorem}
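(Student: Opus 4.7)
The plan is to combine the upper bound of Theorem~\ref{thm6} with the count of even solutions from Theorem~\ref{thm2} and the involution $(u(z),v(z))\mapsto(u(-z),v(-z))$ that preserves~(\ref{Toda}).

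For part~(1), with $(n_1,n_2)=(0,1)$ and $m=0$, Theorem~\ref{thm6} gives
\[
N(\{0\},\{1\})=\tfrac{1}{3\cdot 2}\cdot 1\cdot 2\cdot 3=1,
\]
so the system has at most one solution. Since $0\not\equiv 1 \pmod 3$, Theorem~C supplies existence, and uniqueness together with the symmetry $z\mapsto -z$ forces the unique solution to be even.

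For part~(2), the same bound computes as $N(\{0\},\{2\})=\tfrac{1}{6}\cdot 1\cdot 3\cdot 4=2$. The unique even element of $\{n_1+1,n_2+1,n_1+n_2+2\}=\{1,3,4\}$ is $4=2N_e$, so $N_e=2$. Because $n_1=0$ is even and $n_2=n_1+2$, condition~(3) of Theorem~\ref{thm2}(iii) applies, yielding exactly $N_e=2$ even solutions for all but finitely many $\tau$ modulo $SL(2,\mathbb{Z})$. Since the total number of solutions is also at most $2$, for such generic $\tau$ the two solutions are precisely the two even ones, proving the generic part of (2). It remains to single out the exceptional locus and count solutions there. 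Using the integrability-based parametrization used to prove Theorem~\ref{thm2}, the even solutions of~(\ref{Toda}) for $(n_1,n_2)=(0,2)$ are in bijection with roots of an explicit polynomial equation in a spectral variable whose coefficients are polynomial in $g_2(\tau)$ and $g_3(\tau)$. Because the degree data here are minimal, I would write this polynomial down directly and compute its discriminant; the expectation, consistent with the statement of the theorem, is that this discriminant equals a nonvanishing modular factor times a positive power of $g_2(\tau)$. The two even solutions therefore collide precisely when $g_2(\tau)=0$, equivalently $\tau\equiv e^{\pi i/3}\bmod SL(2,\mathbb{Z})$. At that single $\tau$, the upper bound $2$ from Theorem~\ref{thm6} together with the reflection symmetry (any non-even solution would produce a second distinct even solution, contradicting the discriminant collision) forces the existence of exactly one solution, necessarily even.

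The main obstacle is the explicit identification of the exceptional locus with $\{g_2=0\}$. This amounts to carrying out the integrability/monodromy computation far enough for the pair $(0,2)$ to write down the defining polynomial of even solutions and then to recognize its discriminant as a power of $g_2$ up to a nonvanishing modular prefactor. A secondary, but still delicate, point is to verify that at $\tau\equiv e^{\pi i/3}$ the merged even solution is a genuine (nondegenerate in the appropriate sense) solution of~(\ref{Toda}) rather than a spurious algebraic limit; this should ultimately rest on the extra $\mathbb{Z}/6$ automorphism of the equianharmonic torus, which makes $\tau=e^{\pi i/3}$ the natural locus where the two branches of even solutions meet.
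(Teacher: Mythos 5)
Your part (1) is correct but follows a different route from the paper: you combine the B\'ezout bound $N(0,1)=1$ of Theorem \ref{thm6} with existence (Theorem C) and the $z\mapsto-z$ symmetry, whereas the paper computes the polynomials $P_{0,1}=-D_0$, $P_{0,2}=-\frac{D_0^2}{2}+\frac{2B}{3}$, $P_{0,3}=-D-\frac{BD_0}{6}$ from the recursion (\ref{rec-app}) and reads off the unique root $(0,0,0)$ of (\ref{su3-fc42}), which is of the form $(B,0,0)$ and hence even by Theorem \ref{thm3}. Your argument is clean and computation-free; the paper's computation is the warm-up for part (2), where a computation cannot be avoided.

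For part (2) your generic count is sound ($N(0,2)=2$, $N_e=2$, condition (3) of Theorem \ref{thm2}(iii) applies, so outside a finite exceptional set there are exactly two solutions, both even). The genuine gap is the identification of the exceptional set with $\{g_2=0\}$ and the count there: Theorem \ref{thm2}(iii) only says the exceptional set is \emph{finite} modulo $SL(2,\mathbb{Z})$, and the step you defer --- writing down the degree-$2$ polynomial $P_{N_e}(B)$ for $(0,2)$ and recognizing its discriminant as a nonzero multiple of $g_2$ --- is not a plausibility check but the entire content of the statement. The paper does exactly this: the recursion (\ref{rec-app}) gives $P_{0,1}=-D_0$, $P_{0,2}=-\frac{D_0^3}{24}+\frac{7BD_0}{18}-D$, $P_{0,3}=-\frac{BD_0^2}{36}-\frac{D_0D}{6}+\frac{2B^2}{9}-\frac{2g_2}{27}$, so every solution of (\ref{su3-fc42}) has $D_0=D=0$ and $B^2=g_2/3$; this settles all $\tau$ at once with no case split between generic and exceptional values. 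Two further points. First, your parenthetical is misstated: the reflection of a non-even solution is a second distinct \emph{non-even} solution; the correct argument at the exceptional $\tau$ is that one even solution (Theorem \ref{thm2}(i)) plus a non-even pair would give three solutions, exceeding the bound $2$ --- but this still presupposes knowing that the even count has dropped to one there, which again is the uncomputed discriminant fact. Second, your worry about the merged root being a ``spurious algebraic limit'' is moot: Theorem \ref{thm3} is an exact bijection, for each fixed $\tau$, between roots $(B,0,0)$ of (\ref{su3-fc42}) and even solutions, so every root is a genuine solution and no appeal to the extra automorphism of the equianharmonic torus is needed.
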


\begin{theorem}\label{thm5} Let $(n_1,n_2)=(0,4)$ and note that the weight $12$ modular form $343g_2^3-6561g_3^2$ has a unique zero $\tau_0$ modulo $SL(2,\mathbb{Z})$. Then the following hold.
\begin{itemize}
\item[(1)] For $\tau\equiv i=\sqrt{-1}$ modulo $SL(2,\mathbb{Z})$ (i.e. $g_3=0$), the Toda system (\ref{Toda}) has exactly $3$ solutions that are all even.
\item[(2)] For $\tau\equiv \tau_0$ modulo $SL(2,\mathbb{Z})$, the Toda system (\ref{Toda}) has exactly $4$ solutions, among which $2$ of them are even solutions.
\item[(3)] For $\tau\not\equiv i,\tau_0$ modulo $SL(2,\mathbb{Z})$, the Toda system (\ref{Toda}) has exactly $5$ solutions, among which $3$ of them are even solutions.
\end{itemize}
\end{theorem}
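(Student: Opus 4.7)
The plan is to combine the upper bounds $N(0,4)=5$ from Theorem \ref{thm6} and $N_e=3$ from Theorem \ref{thm2} with an explicit algebraic enumeration that realizes both bounds for generic $\tau$ and detects exactly where solutions collide. Throughout, I would use the integrability of the $SU(3)$ Toda system to replace the PDE by a finite polynomial system on $E_{\tau}$ and then use the structure of modular forms to identify the exceptional loci in terms of $g_2,g_3$.

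First, I would reformulate (\ref{Toda}) with $(n_1,n_2)=(0,4)$ as a polynomial system. Solutions correspond, via the standard Lax/Wronskian construction used in \cite{LWY,LWYZ}, to unitary holomorphic immersions $E_{\tau}\setminus\{0\}\to\mathbb{CP}^{2}$ with ramification data $(0,4)$ at the origin; on $E_{\tau}$ this translates into a polynomial system in the variables $(\wp(z;\tau),\wp'(z;\tau))$ whose coefficients lie in $\mathbb{C}[g_2(\tau),g_3(\tau)]$. The B\'ezout argument giving Theorem \ref{thm6} forces at most $5$ solutions; the goal is to show this count is attained for generic $\tau$.

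Second, I would split solutions by parity. The involution $\sigma\colon z\mapsto -z$ acts on the solution set, with fixed points exactly the even solutions (bounded by $N_e=3$). I would cut out the $\sigma$-invariant subsystem directly and show its generic solution count is $3$. Since non-even solutions come in $\sigma$-orbits of size $2$ and the total is at most $5$, the only possible generic pattern is $3+2$, i.e.\ three even solutions and one orbit of two non-even solutions. Establishing the non-degeneracy of both the full and the even subsystems for generic $\tau$ is the central step; the a priori estimate of Theorem A guarantees that solutions remain in a compact set as $\tau$ varies over compact subsets of $\mathbb{H}/SL(2,\mathbb{Z})$, so the number of solutions is determined entirely by the algebraic discriminant.

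Third, I would locate the degenerate tori by computing that discriminant. It is a modular form in $g_2,g_3$, and matching its expected zero locus against a basis of modular forms of the appropriate weight should identify it, up to a nonzero scalar, as a product whose factors are $g_3$ and $343\,g_2^{3}-6561\,g_3^{2}$. The zero of $343\,g_2^{3}-6561\,g_3^{2}$ at $\tau\equiv\tau_0$ would correspond to the collision of two even solutions, lowering the even count to $2$ and the total to $4$. At $\tau\equiv i$, the extra $\mathbb{Z}/4$ automorphism $z\mapsto iz$ of $E_{\tau}$, available exactly when $g_3(\tau)=0$, acts on the solution set and, combined with $\sigma$, forces the non-even $\sigma$-orbit to collapse onto the even subscheme, leaving $3$ distinct solutions, all even.

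The main obstacle will be the last step: controlling all collisions simultaneously and verifying the exact multiplicity pattern at $\tau\equiv i$ and $\tau\equiv\tau_0$. This requires both a discriminant computation precise enough to exclude any extraneous modular factors and a local-to-global analysis near each degenerate modulus, using Theorem A to rule out escape of solutions to infinity. A byproduct would be the verification of Conjecture \ref{conj-1} in the case $(n_1,n_2)=(0,4)$.
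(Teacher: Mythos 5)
Your plan has a genuine gap at its core step: the deduction that ``the only possible generic pattern is $3+2$'' from the upper bounds $N(0,4)=5$ and $N_e=3$ plus the parity of the non-even count. First, for $(n_1,n_2)=(0,4)$ neither of the hypotheses (1)--(3) of Theorem \ref{thm2}(iii) holds ($n_1,n_2$ are both even and $n_2\neq n_1+2$), so you are not entitled to ``generically exactly $N_e=3$ even solutions'' as an input; that statement for $(0,4)$ is precisely part of what Theorem \ref{thm5} establishes. Second, even granting three even solutions, the parity argument only gives a non-even count in $\{0,2\}$, and nothing in your proposal excludes $0$: you must actually exhibit the non-even branch. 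Third, the ``discriminant as a modular form'' step cannot be carried out by ``matching zero loci against a basis of modular forms,'' because identifying the zero locus is exactly the content of the theorem; you would be assuming the answer. Finally, to convert counts of points of the algebraic system into counts of solutions of the PDE you need the exact bijection of Theorem \ref{thm3-0} (and its even refinement, Theorem \ref{thm3}), not merely the B\'ezout upper bound of Theorem \ref{thm6}.

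What the paper does instead is entirely explicit and short: by Lemma \ref{lemma-su3-3} the solutions correspond bijectively to zeros of $(P_{0,1},P_{0,2},P_{0,3})$ in $(B,D_0,D)$, and for $(0,4)$ the Frobenius recursion gives $P_{0,1}=-D_0$, $P_{0,2}(B,0,D)=\tfrac{5}{54}BD$ and $P_{0,3}(B,0,D)=-\tfrac{1}{486}(6B^3+27D^2-56g_2B+288g_3)$. Hence $D_0=0$ and either $D=0$ (even branch: the cubic $6B^3-56g_2B+288g_3=0$, whose discriminant is $343g_2^3-6561g_3^2$ up to a nonzero constant, giving $3$ or $2$ even solutions) or $B=0$ (non-even branch: $27D^2+288g_3=0$, giving $2$ non-even solutions iff $g_3\neq0$), together with $g_3(\tau_0)\neq0$. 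All three cases of the theorem then follow by inspection. Your observation that at $\tau=i$ the order-four automorphism $z\mapsto iz$ squares to $z\mapsto-z$ and hence forces any non-even solution to lie in an orbit of size $4$, contradicting the bound $5$, is a nice independent way to rule out non-even solutions in case (1); but it does not replace the explicit computation needed for the even count there or for cases (2) and (3).
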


Theorem \ref{thm5} indicates that when $\tau\to \tau_0$, two distinct even solutions converge to the same degenerate even solution; while when $\tau\to i$, three solutions (i.e. two non-even and one even) converge simultaneously to the same degenerate even solution. These statements can be easily seen from the proof of Theorem \ref{thm5} in Section 4.

\subsection{The approach: A third order linear ODE}
Differently from \cite{BJMR},
our approach of proving these results is based on the well-known integrability of the Toda system (\ref{Toda-more-general}).
Let
\[U:=\frac{2u+v}{3},\quad V:=\frac{u+2v}{3},\]
\[\gamma_{1,k}:=\frac{2n_{1,k}+n_{2,k}}{3},\quad \gamma_{2,k}:=\frac{n_{1,k}+2n_{2,k}}{3}.\]
Then (\ref{Toda-more-general}) is equivalent to
\begin{equation}\label{TodaUV}
\begin{cases}
\Delta U+e^{2U-V}=4\pi \sum_{k=0}^m\gamma_{1,k}\delta_{p_k}\quad\text{ on
}\; E_{\tau},\\
\Delta V+e^{2V-U}=4\pi \sum_{k=0}^m\gamma_{2,k}\delta_{p_k}\quad\text{ on
}\; E_{\tau}.
\end{cases}
\end{equation}
Let $(U,V)$ be a solution of (\ref{TodaUV}), then $y(z)=e^{-U(z)}$ solves the following third order linear ODE
\begin{equation}\label{su3-fc1}
(\partial_{z}-V_z)(\partial_{z}+V_z-U_z)(\partial_{z}+U_{z})y(z)=0.
\end{equation}
Recall that \[\zeta(z)=\zeta(z;\tau)=\tfrac{1}{z}-\tfrac{g_2}{60}z^3+\cdots\] is the Weierstrass zeta function defined by $\zeta'(z):=-\wp(z)$. Then we will see in Section 2 that ODE (\ref{su3-fc1}) has the following form
\begin{align}\label{su3-3-ode}
&y'''-\bigg(\sum_{k=0}^m\alpha_k\wp(z-p_k)+\sum_{k=0}^mB_k\zeta(z-p_k)+B\bigg)y'\\
+&\bigg(\sum_{k=0}^m\beta_k\wp'(z-p_k)
+\sum_{k=0}^mD_k\wp(z-p_k)+\sum_{k=0}^mA_k
\zeta(z-p_k)+D\bigg)y=0,\nonumber
\end{align}
where $\alpha_k, \beta_k$ are explicitly determined by $\gamma_{j,k}$'s (see (\ref{su3-fc61})) and $A_k,B_k,D_k,B,D$ are some constants satisfying the residue condition \begin{equation}\label{fc-residue}
\sum_{k=0}^m A_k=0,\quad\sum_{k=0}^m B_k=0.
\end{equation}

This ODE (\ref{su3-3-ode}) has regular singularities at $p_k$ with local exponents
\[-\gamma_{1,k},\quad-\gamma_{1,k}+(n_{1,k}+1),\quad-\gamma_{1,k}+(n_{1,k}+n_{2,k}+2),\]
so (\ref{su3-3-ode}) might have solutions with logarithmic singularity at $p_k$.
Conventionally, the singularity $p_k$ is called \emph{apparent} if (\ref{su3-3-ode}) has no solutions with logarithmic singularity at $p_k$.
It is a standard fact (cf. \cite{LWY,Nie14}) that $p_k$'s are all apparent singularities of (\ref{su3-3-ode}) if it comes from a solution $(U,V)$ of (\ref{TodaUV}); see Section 2 for a brief explanation. Thanks to this fact,
we will prove Theorem \ref{thm1} in Section 5 by showing that at least one of $\frac{\omega_k}{2}$'s can not be apparent of (\ref{su3-3-ode}) under the assumption of Theorem \ref{thm1}.

Denote the set of $3m+5$ parameters of ODE (\ref{su3-3-ode}) by
\begin{equation}\label{vec-B}\vec{\mathbf B}:=(A_0,\cdots,A_m,B_0,\cdots,B_m,B,D_0,\cdots,D_m,D)\end{equation}
for convenience.
Our proof of Theorem \ref{thm6} is based on the following result.

\begin{theorem}\label{thm3-0}
Let $\mathcal{N}_1\not\equiv \mathcal{N}_2\mod 3$. Then there is a one-to-one correspondence between solutions of the Toda system (\ref{Toda-more-general})
and those $\vec{\mathbf B}$'s such that all $p_k$'s are apparent singularities of ODE (\ref{su3-3-ode})-(\ref{fc-residue}).
\end{theorem}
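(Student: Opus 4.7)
The plan is to convert the problem into monodromy data of the linear ODE (\ref{su3-3-ode}) on $E_\tau\setminus\{p_k\}$ via the substitution $y=e^{-U}$: the forward map $(u,v)\mapsto\vec{\mathbf B}$ will follow from the computations promised in Section~2, while the bulk of the work goes into reconstructing a Toda solution from any abstract apparent $\vec{\mathbf B}$. For the forward direction, given a Toda solution I view $y=e^{-U}$ as a function of $z$ with $\bar z$ as parameter and use the elementary identity $(\partial_z+U_z)y=0$ to verify that $y$ satisfies the factored ODE (\ref{su3-fc1}); expanding the product yields the elliptic form (\ref{su3-3-ode}) with $\alpha_k,\beta_k$ read off from $\gamma_{1,k},\gamma_{2,k}$, with $\vec{\mathbf B}$ uniquely determined by $(U,V)$, and with the residue relations (\ref{fc-residue}) forced by double periodicity. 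Because $e^{-U}$ admits the single-valued local form $|z-p_k|^{-2\gamma_{1,k}}\times(\text{smooth positive})$, its holomorphic component realizes the minimal local exponent $-\gamma_{1,k}$; the other two exponents differ from it by the positive integers $n_{1,k}+1$ and $n_{1,k}+n_{2,k}+2$, and the standard Frobenius-type criterion from \cite{LWY,Nie14} then forces $p_k$ to be apparent.

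\textbf{Surjectivity.} Given $\vec{\mathbf B}$ such that every $p_k$ is apparent, the local monodromy at $p_k$ is the scalar $e^{-2\pi i\gamma_{1,k}}I$, and the single relation $[a,b]\gamma_0\gamma_1\cdots\gamma_m=1$ in $\pi_1(E_\tau\setminus\{p_k\})$ gives
\[
[M_1,M_\tau]=\zeta\cdot I,\qquad \zeta:=e^{2\pi i(2\mathcal N_1+\mathcal N_2)/3},
\]
where $M_1,M_\tau\in SL(3,\mathbb{C})$ are the two torus-cycle monodromies (the unit determinant follows from the absence of a $y''$ term in (\ref{su3-3-ode}), which makes the Wronskian constant). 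The non-critical hypothesis $\mathcal N_1\not\equiv\mathcal N_2\mod 3$ is exactly the statement that $\zeta$ is a primitive cube root of unity; combined with $\det M_1=1$, the twisted commutation $M_1M_\tau=\zeta M_\tau M_1$ forces the spectrum of $M_1$ to equal $\{1,\zeta,\zeta^2\}$, and in the corresponding eigenbasis $\{y_1,y_2,y_3\}$ the matrix $M_\tau$ acts as a $\zeta$-twisted cyclic shift. A short computation in that basis produces, uniquely up to a positive scalar, a positive-definite diagonal Hermitian form $H=\mathrm{diag}(h_1,h_2,h_3)$ satisfying $M_j^* H M_j=H$ for $j=1,\tau$. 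Setting
\[
e^{-U(z,\bar z)}:=\sum_{i=1}^{3}h_i\,|y_i(z)|^2
\]
and recovering $e^{-V}$ by the analogous construction on the Wronskian-dual ODE, the invariance of $H$ makes $U,V$ single-valued on $E_\tau\setminus\{p_k\}$, while the $|y_1|^2$ term dominates at $p_k$ to give $U\sim\gamma_{1,k}\log|z-p_k|^2$, yielding the prescribed source $4\pi\gamma_{1,k}\delta_{p_k}$. A derivative computation using the ODE and the factorization (\ref{su3-fc1}) then verifies that $(U,V)$ solves (\ref{TodaUV}), so $(u,v)=(2U-V,\,2V-U)$ is the required Toda solution.

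\textbf{Injectivity.} If two Toda solutions $(u_i,v_i)$ give the same $\vec{\mathbf B}$, they give the same ODE and the same space of holomorphic solutions $\{y_a\}$. The expansions $e^{-U_i}=\sum_{a,b}(H_i)_{ab}\,y_a(z)\overline{y_b(z)}$ realize $H_i$ as positive-definite monodromy-invariant Hermitian forms, and the uniqueness up to positive scalar established in the surjectivity step gives $H_1=c\,H_2$ with $c>0$, so $U_1-U_2$ is a constant. The parallel argument on the dual ODE gives $V_1-V_2$ constant; both constants are pinned down by integrating $\Delta U+e^{2U-V}=4\pi\sum\gamma_{1,k}\delta_{p_k}$ and its $V$-analog over $E_\tau$, which prescribe $\int_{E_\tau}e^{2U-V}$ and $\int_{E_\tau}e^{2V-U}$. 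Hence $(u_1,v_1)=(u_2,v_2)$ and the correspondence is bijective.

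\textbf{Main obstacle.} The crux is the existence and \emph{positive definiteness} of the invariant Hermitian form $H$ in the surjectivity step. The $\zeta$-twisted commutation $M_1M_\tau=\zeta M_\tau M_1$ does not on its own guarantee any definite invariant, and positivity would fail if the eigenvalues of $M_1$ left the unit circle. What saves the argument is the simultaneous use of three features: non-criticality makes $\zeta$ a primitive cube root of unity; the vanishing $y''$ coefficient in (\ref{su3-3-ode}) forces $\det M_j=1$; and the resulting cyclic eigenvalue structure pins the spectrum of $M_1$ to $\{1,\zeta,\zeta^2\}$ on the unit circle, after which every nontrivial diagonal Hermitian invariant is automatically definite. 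Coordinating this representation-theoretic picture with the analytic verification that $\sum h_i|y_i|^2$ indeed gives a real positive Toda potential with \emph{exactly} the prescribed singular strengths at each $p_k$ (rather than merely a single-valued quadratic combination of the $y_i$'s) is the most delicate part of the proof.
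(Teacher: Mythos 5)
Your proposal is correct and follows essentially the same route as the paper: both directions reduce to the monodromy of the linear ODE, with the key step being that apparentness plus the non-critical condition forces the twisted commutation $N_1N_2=\varepsilon N_2N_1$ with $\varepsilon$ a primitive cube root of unity, hence a unitarizable monodromy and a positive-definite invariant Hermitian form whose "Hermitian square" $\sum h_i|y_i|^2$ reconstructs $e^{-U}$. The one place where your route genuinely diverges is the treatment of $V$ and of the normalization. The paper packages everything into the matrix $R=|W|^{-2/3}W\overline{W}^T$ with $\det R=1$, defines $e^{-U}=\tfrac14R_1$ and $e^{-V}=\tfrac14R_2$ as leading principal minors, and gets both Toda equations \emph{and} the correct coupling constant in one stroke from the Jacobi-type identity $R_i\partial_{z\bar z}R_i-\partial_zR_i\,\partial_{\bar z}R_i=R_{i-1}R_{i+1}$ together with $R_3=1$; the same identity, combined with the dual exponents at $p_k$, yields the singular behavior of $V$. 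You instead build $e^{-V}$ independently from the Wronskian-dual ODE and pin the additive constants afterwards by integrating both equations over $E_\tau$ — a clean alternative to the paper's Step 3, but it leaves implicit the verification that your independently normalized $e^{-U}$ and $e^{-V}$ satisfy the \emph{coupled} system with coefficient exactly $1$ (rather than a scalar-perturbed version); this is exactly the content of the identity $e^{-U}\partial_{z\bar z}e^{-U}-|\partial_ze^{-U}|^2=\mathrm{const}\cdot\sum|W(y_i,y_j)|^2$ that the paper makes explicit, so you should state and use it rather than appeal to "a derivative computation". With that identity supplied, your argument is complete and equivalent to the paper's.
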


By Theorem \ref{thm3-0}, to prove Theorem \ref{thm6} we only need to count those $\vec{\mathbf B}$'s such that all $p_k$'s are apparent singularities.

For the special case (\ref{Toda}), ODE (\ref{su3-3-ode}) has a simpler expression and reads as
\begin{equation}\label{fc-3ode}
y'''-(\alpha \wp(z)+B)y'+(\beta\wp'(z)+D_0\wp(z)+D)y=0,
\end{equation}
where $B,D_0,D$ are complex constants and
\[
\alpha=\gamma_1(\gamma_1+1)+\gamma_2(\gamma_2+1)-\gamma_1\gamma_2,
\]\[
\beta=-\frac{2\gamma_1(\gamma_1+1)+\gamma_1\gamma_2(\gamma_1-\gamma_2-1)}{2},\]
with
\[\gamma_1:=\frac{2n_1+n_2}{3},\quad \gamma_2:=\frac{n_1+2n_2}{3}.\]
See Section 2 for a proof. Then our proof of Theorems \ref{thm2} and \ref{thm4}-\ref{thm5} is based on the following result.

\begin{theorem}\label{thm3}
Let $n_1\not\equiv n_2\mod 3$ and $n_1 < n_2$. Then there is a one-to-one correspondence between even solutions of the Toda system (\ref{Toda})
and those $(B,D_0,D)$'s with $D_0=D=0$ such that $0$ is an apparent singularity of ODE (\ref{fc-3ode}).
\end{theorem}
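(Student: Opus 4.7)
The plan is to deduce Theorem \ref{thm3} from Theorem \ref{thm3-0} by transporting the involution
\[\sigma:(u(z),v(z))\mapsto(u(-z),v(-z))\]
on Toda solutions through the correspondence, and then computing its fixed set on the ODE parameter side. I first specialize Theorem \ref{thm3-0} to $m=0$ and $p_0=0$: the residue conditions \eqref{fc-residue} force $A_0=B_0=0$, so the general ODE \eqref{su3-3-ode} reduces exactly to the three-parameter form \eqref{fc-3ode}. Thus Theorem \ref{thm3-0} supplies a bijection between solutions of \eqref{Toda} and triples $(B,D_0,D)$ for which $0$ is an apparent singularity. Because $\sigma$ is an involution on the Toda solution set (as $\delta_0$ is $\sigma$-invariant), an even solution is precisely a $\sigma$-fixed point, and the task becomes the identification of the induced action of $\sigma$ on the triples.

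Next I would carry out the transport computation. If $y=e^{-U}$ is the canonical solution of \eqref{fc-3ode} attached to $(u,v)$, then the canonical solution attached to $\sigma(u,v)$ is $\tilde y(z):=e^{-U(-z)}=y(-z)$. Substituting $z\mapsto -z$ in \eqref{fc-3ode} and using the parities $\wp(-z)=\wp(z)$ and $\wp'(-z)=-\wp'(z)$, a direct calculation gives
\begin{equation*}
\tilde y''' - (\alpha\wp(z)+B)\,\tilde y' + (\beta\wp'(z)-D_0\wp(z)-D)\,\tilde y = 0,
\end{equation*}
so the ODE attached to $\sigma(u,v)$ has parameters $(B,-D_0,-D)$. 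Since $0$ is itself fixed by $\sigma$, the apparent-singularity condition at $0$ is automatically preserved. Consequently $\sigma$ acts on the parameter side by $(B,D_0,D)\mapsto(B,-D_0,-D)$, whose fixed locus is exactly $\{D_0=D=0\}$, and Theorem \ref{thm3} follows.

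The main point requiring care is the \emph{equivariance} of the correspondence in Theorem \ref{thm3-0}: one must check that the assignment $(u,v)\mapsto(B,D_0,D)$ intertwines $\sigma$ on the solution side with the involution $(B,D_0,D)\mapsto(B,-D_0,-D)$ on the parameter side. I would secure this by observing that the ODE is built from $(U,V)$ through the manifestly covariant factored operator
\[(\partial_z-V_z)(\partial_z+V_z-U_z)(\partial_z+U_z),\]
so pulling back by $\sigma$ returns literally the ODE associated to $(U(-z),V(-z))$; the parameter change is then read off from the coefficient transformation computed above. Once equivariance is pinned down, the rest of the argument is a direct bookkeeping of fixed points.
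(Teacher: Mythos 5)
Your argument is correct, and it takes a genuinely different (and shorter) route than the paper's. Both proofs handle the easy direction identically: for an even solution the $W$-invariant $W_3$ is odd, which forces $D_0=D=0$ (this is your equivariance identity evaluated at a $\sigma$-fixed solution). The difference is in the converse. The paper proves directly that the solution produced by Theorem \ref{thm-4.1} from a parameter $(B,0,0)$ is even: it extends the normalized basis $\vec{Y}$ to a domain $\Xi$ symmetric under $z\mapsto -z$, writes $\vec{Y}(-z)=M\vec{Y}(z)$, and uses the relations $MN_1=\varepsilon N_1^{-1}M$ and $M\varepsilon^2N_2=\varepsilon^{-2}N_2^{-1}M$ forced by (\ref{su3-fc4}) to pin down $M$ and show $M\in U(3)$, whence $e^{-U}$ is invariant under $z\mapsto -z$ by (\ref{n-eq-520}). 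You instead observe that the correspondence of Theorem \ref{thm3-0} intertwines $\sigma$ with $(B,D_0,D)\mapsto(B,-D_0,-D)$ --- your coefficient computation is right, since $\tilde{W}_2(z)=W_2(-z)$ and $\tilde{W}_3(z)=-W_3(-z)$ because each monomial in $W_2$ (resp.\ $W_3$) carries an even (resp.\ odd) total number of $z$-derivatives --- and then invoke the \emph{uniqueness} half of Theorem \ref{thm-4.1}: the two solutions $(U(z),V(z))$ and $(U(-z),V(-z))$ of (\ref{Toda}) share the same parameter $(B,0,0)$, hence coincide. You also correctly note the point requiring care, namely that the parameter assignment is defined through the covariant factored operator, so the equivariance is legitimate and not merely a statement about one particular solution $e^{-U}$. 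This fixed-point-transport argument trades the paper's explicit monodromy computation for the uniqueness statement already established in Theorem \ref{thm-4.1}; what you give up is only the by-product of the paper's proof (the explicit unitary form of the reflection matrix $M$, whose method reappears in the proof of Theorem \ref{thm1}), and what you gain is brevity and a more conceptual reason why the fixed loci must match.
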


Thanks to Theorems \ref{thm3-0}-\ref{thm3}, to prove Theorems \ref{thm2} and \ref{thm4}-\ref{thm5} we only need to count those $(B,D_0,D)$'s (with $D_0=D=0$ for Theorem \ref{thm2}) such that $0$ is an apparent singularity of (\ref{fc-3ode}).

The rest of this paper is organized as follows. In Section 2, we study the precise relation between the Toda system (\ref{TodaUV}) and its associated third order linear ODE (\ref{su3-3-ode}). In Section 3, we prove Theorems \ref{thm3-0}-\ref{thm3}. In Section 4, we prove Theorems \ref{thm6} and \ref{thm4}-\ref{thm5}. Theorems \ref{thm1} and \ref{thm2} will be proved in Sections 5 and 6 respectively. Parts of the above results can be generalized to the general $SU(N+1)$ Toda system and will be studied in a forthcoming paper.

\section{The associated linear ODE}

\label{sec-dual}

In this section, we study the relation between solutions of the Toda system and the associated linear ODE.

Let $(U,V)$ be a solution of the Toda system (\ref{TodaUV}), as in \cite{LWY,Nie14} we
consider the following linear differential operator
\begin{align}\label{operator}
\mathcal{L}:=(\partial_{z}-V_z)(\partial_{z}+V_z-U_z)(\partial_{z}+U_{z})=\partial_{z}^3+W_2\partial_z+W_3,
\end{align}
where
\begin{equation}\label{su3-eqeq}W_2:=U_{zz}-U_z^2+V_{zz}-V_z^2+U_zV_z,\end{equation}
\[W_3:=U_{zzz}-2U_zU_{zz}+U_zV_{zz}+V_zU_z^2-U_zV_z^2\]
are known as $W$-invariants or $W$-symmetries in the literature due to their relationship to the W-algebras; see e.g. \cite{BFO, LWY, Nie12, Nie14, Nie16}. It is easy to see that they satisfy the following crucial property (cf. \cite{LWY,Nie14})
\begin{equation*}
W_{2,\bar{z}}=W_{3,\bar{z}}=0,\quad \text{for }\; z\in E_\tau\setminus \{p_k\}_{k=0}^m,
\end{equation*}
so $W_2$ and $W_3$ are \emph{elliptic} functions. Since (\ref{TodaUV}) yields
\[U(z)=2\gamma_{1,k}\ln|z-p_k|+O(1),\; V(z)=2\gamma_{2,k}\ln|z-p_k|+O(1),\;\text{at }z=p_k,\]
we have
\[W_{2}(z)=
\frac{-\alpha_k}{(z-p_k)^2}+O((z-p_k)^{-1}),\]
\[W_{3}(z)=
\frac{-2\beta_k}{(z-p_k)^3}+O((z-p_k)^{-2}),\]
and $W_2, W_3$ have no other poles in $E_{\tau}$ except $\{p_k\}_{k=0}^m$, where
\begin{equation}\label{su3-fc61}
\alpha_k:=\gamma_{1,k}(\gamma_{1,k}+1)+\gamma_{2,k}(\gamma_{2,k}+1)-\gamma_{1,k}\gamma_{2,k},
\end{equation}
\begin{equation}
\beta_k:=-\frac{2\gamma_{1,k}(\gamma_{1,k}+1)+\gamma_{1,k}\gamma_{2,k}
(\gamma_{1,k}-\gamma_{2,k}-1)}{2}.
\end{equation}
Then the Liouville theorem implies that
\begin{equation}\label{su3-eqeqeq}W_{2}(z)
=-\bigg(\sum_{k=0}^m\alpha_k\wp(z-p_k)+\sum_{k=0}^mB_k
\zeta(z-p_k)+B\bigg),\end{equation}
\[ W_{3}(z)=\sum_{k=0}^m\beta_k\wp'(z-p_k)
+\sum_{k=0}^mD_k\wp(z-p_k)+\sum_{k=0}^mA_k
\zeta(z-p_k)+D,\]
where $ A_k,B_k, D_k, B, D$'s are some constants satisfying the residue condition (\ref{fc-residue}). Indeed, there are $B_k$'s such that
 \[W_{2}(z)+\bigg(\sum_{k=0}^m\alpha_k\wp(z-p_k)+\sum_{k=0}^mB_k
\zeta(z-p_k)\bigg)\]
is holomorphic in $\mathbb{C}$. Since this function is doubly-periodic and so bounded in $\mathbb{C}$, it must be a constant by the Liouville theorem.

In conclusion, it follows from (\ref{operator}) that $e^{-U(z)}$ solves the following third order linear ODE
\begin{align}\label{su3-ode}
&\mathcal{L}y=y'''-\bigg(\sum_{k=0}^m\alpha_k\wp(z-p_k)+\sum_{k=0}^mB_k
\zeta(z-p_k)+B\bigg)y'\\
+&\bigg(\sum_{k=0}^m\beta_k\wp'(z-p_k)
+\sum_{k=0}^mD_k\wp(z-p_k)+\sum_{k=0}^mA_k
\zeta(z-p_k)+D\bigg)y=0.\nonumber
\end{align}

This ODE (\ref{su3-ode}) is of Fuschian type and has regular singularities $p_k$'s.
A direct computation shows that the local exponents $\rho_{k,j}$, $j=1,2,3$, of (\ref{su3-ode}) at $z=p_k$ are
\begin{equation}\label{localex0}
\rho_{k,1}=-\gamma_{1,k}, \quad \rho_{k,2}=-\gamma_{1,k}+(n_{1,k}+1),\end{equation} \[\rho_{k,3}=-\gamma_{1,k}+(n_{1,k}+n_{2,k}+2).\]
Note that under our basic assumption $\mathcal{N}_1\not\equiv\mathcal{N}_2\mod 3$, we see that $\gamma_{1,k}\notin\mathbb{Z}$ for some $k$, so any solution of (\ref{su3-ode}) has branch points at such $p_k$ and hence is \emph{multi-valued}.

On the other hand,
since the exponent differences are all integers, (\ref{su3-ode}) might have solutions with logarithmic singularity at $p_k$'s. As recalled in Section 1, $p_k$ is called {\it an apparent singularity} of (\ref{su3-ode}) if all solutions of (\ref{su3-ode}) has no logarithmic singularity at $p_k$.  In this case, the local monodromy matrix (denoted it by $M_{k}$) at the apparent singularity $p_k$ is given by
\begin{equation}\label{su3-fc2}M_k=e^{-2\pi i \gamma_{1,k}}I_3,\end{equation}
where $I_3=\text{diag}(1,1,1)$ denotes the identity matrix.

\begin{remark}\label{remark11}
Fix a base point $q_{0}=-\varepsilon_0(1+\tau)=-\varepsilon_0\omega_3$ with $0<\varepsilon_0\ll1$ such that $q_0$ is close to $0$ and its neighborhood $B(q_0, |q_0|)=\{z\in\mathbb{C} ||z-q_0|<|q_0|\}$ contains no singularities of (\ref{su3-ode}).
The monodromy representation of (\ref{su3-ode}) is a group homomorphism $\rho:\pi_{1}(  E_{\tau}
\backslash \{p_k\}_{k=0}^m,q_0)  \rightarrow
SL(3,\mathbb{C})$. Let \[\vec{Y}:=(y_1,y_2,y_3)^{T}=\begin{pmatrix}y_1\\
y_2\\
y_3\end{pmatrix}\] be a basis of local solutions in a neighborhood $B(q_0, |q_0|/2)=\{z\in\mathbb{C} ||z-q_0|<|q_0|/2\}$ of $q_0$. For any loop $\ell\in \pi_{1}(  E_{\tau}
\backslash \{p_k\}_{k=0}^m, q_{0}) $, we denote by $\ell^*y_j(z)$ to be the analytic continuation of $y_j(z)$ along $\ell$, then there is a matrix $\rho(\ell)\in SL(3,\mathbb{C})$ such that
\begin{equation}\label{su3-fc30}\ell^*\vec{Y}=\rho(\ell)\vec{Y}.\end{equation}
We say that the monodromy is {\it unitary} if the monodromy group is conjugate to a subgroup of the unitary group $SU(3)$.

Recall $\omega_{1}=1$,
$\omega_{2}=\tau$.
Let $\ell_{j}
\in \pi_{1}(E_{\tau}\backslash \{p_k\}_{k=0}^m,q_{0})$, $j=1,2$, be two
fundamental cycles of $E_{\tau}$ connecting $q_{0}$ with $q_{0}+\omega_{j}$ and let
$\varsigma_{k}\in \pi_{1}(  E_{\tau}%
\backslash \{p_k\}_{k=0}^m  ,q_{0})$ be a
simple loop encircling $p_k$ counterclockwise respectively such that
\begin{equation}
\ell_{1}\ell_{2}\ell_{1}^{-1}\ell_{2}^{-1}=\varsigma_{0}\varsigma_{1}\cdots\varsigma_{m}\text{ \  \ in
}\pi_{1}(  E_{\tau}\backslash \{p_k\}_{k=0}^m
,q_{0})  . \label{II-iv1}%
\end{equation}

Now suppose that $p_k$ are apparent singularities of (\ref{su3-ode}) for all $k$. Then (\ref{su3-fc2}) implies $\rho(\varsigma_{k})=e^{-2\pi i \gamma_{1,k}}I_3$ for all $k$, so we conclude from (\ref{II-iv1}) that the monodromy matrices $N_j:=\rho(\ell_j)$ satisfy
\begin{equation}\label{su3-fc3}N_1N_2N_1^{-1}N_2^{-1}=\varepsilon I_3,\quad\text{where }\varepsilon:=e^{-2\pi i \sum_k \gamma_{1,k}}=e^{-2\pi i \frac{2\mathcal{N}_1+\mathcal{N}_2}{3}}.\end{equation}
Since $\mathcal{N}_1\not\equiv\mathcal{N}_2 \mod 3$, we have $\varepsilon\neq \pm1 $ and $\varepsilon^3=1$.
\end{remark}

\begin{lemma}\label{lemma-su3-1}
Suppose that $p_k$ are apparent singularities of (\ref{su3-ode}) for all $k$. Then up to multiplying by a common constant $c\neq 0$, there is a unique basis of local solutions $\vec{Y}=(y_1,y_2,y_3)^{T}$ in $B(q_0, |q_0|/2)$ such that the associated monodromy matrices $N_1, N_2$ are given by
\begin{equation}\label{su3-fc4}N_1=\left(\begin{smallmatrix}1 & &\\
&\varepsilon & \\
& & \varepsilon^2\end{smallmatrix}\right),\quad N_2=\left(\begin{smallmatrix} & &1\\
1& & \\
&1 & \end{smallmatrix}\right),\end{equation}
where $\varepsilon=e^{-2\pi i \frac{2\mathcal{N}_1+\mathcal{N}_2}{3}}$.
In particular, the monodromy is unitary.

Conversely, if the monodromy is unitary, then $p_k$ are apparent singularities of (\ref{su3-ode}) for all $k$ and so (\ref{su3-fc4}) holds.
\end{lemma}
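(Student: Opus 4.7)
The plan is to exploit the commutator identity (\ref{su3-fc3}), which under the hypothesis $\mathcal{N}_1\not\equiv\mathcal{N}_2\mod 3$ reads $N_1 N_2=\varepsilon N_2 N_1$ with $\varepsilon$ a primitive cube root of unity, and to classify the admissible pairs $(N_1,N_2)\in SL(3,\mathbb{C})^2$ by eigenspace analysis.

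For the forward direction, assuming every $p_k$ is apparent, I would first observe that the commutation relation forces the spectrum of each of $N_1$ and $N_2$ to equal $\{1,\varepsilon,\varepsilon^2\}$: if $N_2 v=\mu v$ then $N_2(N_1 v)=\varepsilon^{-1}\mu(N_1 v)$, so the $N_2$-spectrum is a nonempty subset of $\mathbb{C}^{\times}$ closed under multiplication by $\varepsilon$, hence of the form $\{\mu,\varepsilon\mu,\varepsilon^2\mu\}$; the constraint $\det N_2=1$ then forces $\mu^3=1$, and the analogous argument applies to $N_1$. Choosing an eigenbasis $(e_1,e_2,e_3)$ of $N_1$ ordered so that $N_1=\operatorname{diag}(1,\varepsilon,\varepsilon^2)$, the identity $N_1 N_2=\varepsilon N_2 N_1$ forces $N_2 e_j$ to lie in the $\varepsilon^{j}$-eigenspace of $N_1$, so $N_2$ is a weighted cyclic shift $N_2 e_j=c_j e_{j+1}$ (indices mod $3$) with $c_1 c_2 c_3=\det N_2=1$. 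The rescaling $e_j\mapsto a_j e_j$ with $a_2=a_1 c_1$ and $a_3=a_1 c_1 c_2$ preserves $N_1$'s diagonal form and normalizes every $c_j$ to $1$ (the third normalization $c_3'=c_1 c_2 c_3=1$ is automatic from $\det N_2=1$), producing the matrices (\ref{su3-fc4}) with residual freedom the overall scalar $a_1=c$. Uniqueness follows because any further basis change preserving both normal forms must commute with $N_1$ (hence be diagonal, since $N_1$ has distinct eigenvalues) and with the cyclic shift $N_2$ (hence have equal diagonal entries), so it is scalar. In this basis $N_1$ is diagonal with unimodular entries, $N_2$ is a permutation matrix, and by (\ref{su3-fc2}) each local monodromy $\rho(\varsigma_k)=e^{-2\pi i\gamma_{1,k}}I_3$ is scalar and unimodular, so the generated group sits inside $U(3)$, proving unitarity.

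For the converse, if the monodromy group is conjugate into the unitary group, then every element is diagonalizable; in particular each $\rho(\varsigma_k)$ is. On the other hand, the local exponents (\ref{localex0}) at $p_k$ pairwise differ by positive integers, so the three formal eigenvalues of $\rho(\varsigma_k)$ all coincide with $e^{-2\pi i\gamma_{1,k}}$. A diagonalizable matrix with a single eigenvalue is scalar, hence $\rho(\varsigma_k)=e^{-2\pi i\gamma_{1,k}}I_3$, which by (\ref{su3-fc2}) and the Frobenius method excludes logarithmic solutions at $p_k$, so $p_k$ is apparent; invoking the forward direction then yields (\ref{su3-fc4}). The main obstacle I anticipate is the bookkeeping in the rescaling step, namely verifying that the one-parameter rescaling family simultaneously preserves $N_1$'s diagonal form and normalizes all three coefficients $c_j$ of $N_2$, and that the leftover freedom is exactly the common scalar $c$ of the statement with no subtler normalization slipping past.
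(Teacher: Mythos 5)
Your proposal is correct and follows essentially the same route as the paper: the eigenvalue analysis of the relation $N_1N_2=\varepsilon N_2N_1$, diagonalization of $N_1$, normalization of the weighted cyclic shift $N_2$ by a diagonal rescaling, uniqueness via the centralizer of the pair being the scalars, and the converse via the fact that a unitary (hence diagonalizable) local monodromy whose eigenvalues all coincide must be scalar, excluding logarithmic solutions. The only cosmetic point is that the generators all have determinant $1$, so the group lies in $SU(3)$ (the paper's definition of unitary), not merely $U(3)$.
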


\begin{proof} Suppose that $p_k$ are apparent singularities of (\ref{su3-ode}) for all $k$. Then we have (\ref{su3-fc3}) holds, i.e. $N_1N_2=\varepsilon N_2N_1$. Let $\lambda$ be an eigenvalue of $N_1$ with eigenvector $x\neq 0$, then $\lambda\neq 0$, $N_2x\neq0$ and
\[N_1N_2x=\varepsilon N_2N_1x=\varepsilon\lambda N_2x,\]
so $\lambda, \varepsilon \lambda, \varepsilon^2\lambda$ are all the eigenvalues of $N_1$. Consequently, $\lambda^3=\lambda^3\varepsilon^3=\det N_1=1$, which implies $\lambda\in \{1,\varepsilon,\varepsilon^2\}$. Thus $1, \varepsilon, \varepsilon^2$ are all the eigenvalues of $N_1$, so up to common conjugation to $N_1, N_2$, we may assume
\[N_1=\left(\begin{smallmatrix}1 & &\\
&\varepsilon & \\
& & \varepsilon^2\end{smallmatrix}\right).\]
Inserting this into $N_1N_2=\varepsilon N_2N_1$, a direct computation leads to
\[N_2=\left(\begin{smallmatrix} & &a\\
b& & \\
&c & \end{smallmatrix}\right)\quad\text{with }abc=1.\]
Letting $P=\text{diag}(1, b^{-1}, a)$, we obtain
\[PN_1P^{-1}=N_1=\left(\begin{smallmatrix}1 & &\\
&\varepsilon & \\
& & \varepsilon^2\end{smallmatrix}\right),\]
\[PN_2P^{-1}=\left(\begin{smallmatrix} 1& &\\
&\frac1b & \\
& & a\end{smallmatrix}\right)\left(\begin{smallmatrix} & &a\\
b& & \\
&c & \end{smallmatrix}\right)\left(\begin{smallmatrix} 1& &\\
&b & \\
& & \frac1a\end{smallmatrix}\right)=\left(\begin{smallmatrix} & &1\\
1& & \\
&1 & \end{smallmatrix}\right).\]
This proves (\ref{su3-fc4}). In particular, $N_1, N_2\in SU(3)$. Since the monodromy group is generated by $N_1, N_2$ and $\rho(\varsigma_{k})=e^{-2\pi i \gamma_{1,k}}I_3\in SU(3)$, $0\leq k\leq m$, we conclude that the monodromy group is contained in $SU(3)$, namely the monodromy is unitary.

If there are two bases of local solutions $(y_1,y_2,y_3)^{T}$ and $(\tilde{y}_1,\tilde{y}_2,\tilde{y}_3)^{T}$ such that the corresponding $N_j$'s are both given by (\ref{su3-fc4}), it follows from $\varepsilon\neq \pm 1$ and the expression of $N_1$ that $\tilde{y}_i=c_iy_i$ with $c_i\neq 0$ for all $i$, and then the expression of $N_2$ implies $c_1=c_2=c_3=:c$, so $(\tilde{y}_1,\tilde{y}_2,\tilde{y}_3)^{T}=c(y_1,y_2,y_3)^{T}$ for some constant $c\neq 0$.

Conversely, if the monodromy is unitary, then $p_k$ are apparent singularities of (\ref{su3-ode}) for all $k$ because of the standard fact (see. e.g. \cite[Section 1.3 in Chapter 1]{GP}): The local monodromy matrix at $p_k$ is of the form
\[e^{-2\pi i \gamma_{1,k}}\left(\begin{smallmatrix}1 & &\\
a&1 & \\
b& c& 1\end{smallmatrix}\right)\quad \text{with }(a,b,c)\neq (0,0,0),\]
and so can not be a unitary matrix if (\ref{su3-ode}) has solutions with logarithmic singularity at $p_k$.
\end{proof}

\begin{lemma} \label{thm-A} Suppose the Toda system (\ref{TodaUV}) has a solution $(U, V)$. Then $p_k$ are apparent singularities of the associated ODE (\ref{su3-ode}) for all $k$ and so Lemma \ref{lemma-su3-1} applies.
In particular, there are a basis of local solutions $\vec{Y}=(y_1, y_2, y_3)^T$ in $B(q_0, |q_0|/2)$ such that
\begin{equation}\label{fffffff}e^{-U(z)}=|y_1(z)|^2+|y_2(z)|^2+|y_3(z)|^2,\end{equation}
and the monodromy matrices $N_1, N_2$ with respect to $(y_1, y_2, y_3)^T$ are given by (\ref{su3-fc4}).
\end{lemma}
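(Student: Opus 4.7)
The plan is to deduce the lemma from the integrability of the Toda system. First, I would verify that $e^{-U}$ is annihilated by $\mathcal{L}$. This is immediate from the factorization (\ref{operator}): $(\partial_{z}+U_z)e^{-U}=-U_z e^{-U}+U_z e^{-U}=0$, and $(\partial_{z}+U_z)$ is the rightmost factor of $\mathcal{L}$. Hence $\mathcal{L}e^{-U}=0$ as an ODE in $z$ (with $\bar z$ a parameter). Using that $U$ is real, one also has $\overline{\mathcal{L}}\,e^{-U}=0$ in $\bar z$, where $\overline{\mathcal{L}}=\partial_{\bar z}^3+\overline{W_2}\partial_{\bar z}+\overline{W_3}$.

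The main and analytic step is to construct a holomorphic frame $(y_1,y_2,y_3)^T$ of solutions of $\mathcal{L}y=0$ on $B(q_0,|q_0|/2)$ such that $e^{-U(z)}=|y_1(z)|^2+|y_2(z)|^2+|y_3(z)|^2$. This is the classical developing-map (harmonic-sequence) construction associated with the Toda system; see e.g.\ the discussions in \cite{LWY, Nie14}. Concretely, starting from any local holomorphic basis and using $\mathcal{L}e^{-U}=\overline{\mathcal{L}}\,e^{-U}=0$, one expands $e^{-U}=\sum_{j,k}H_{jk}\,y_j(z)\overline{y_k(z)}$ for some Hermitian matrix $H$, and the fact that $(U,V)$ is an actual solution of the \emph{coupled} Toda system (not merely of the single ODE $\mathcal{L}y=0$) forces $H$ to be positive definite. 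One then diagonalizes $H=P^*P$ and replaces the basis by $P\cdot(y_1,y_2,y_3)^T$; equivalently, one may directly use the Frenet frame of the holomorphic curve into $\mathbb{CP}^2$ provided by the integrability. This is the main obstacle in the plan: positive-definiteness of $H$ is the genuinely non-formal content of integrability and has to be established by invoking both Toda equations together, whereas the rest of the argument is formal.

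Once such a local frame is in hand, the unitarity of the monodromy follows essentially by construction. Since $e^{-U}=\sum_j|y_j|^2$ is single-valued on $E_\tau\setminus\{p_k\}_{k=0}^m$, for any loop $\ell\in\pi_1(E_\tau\setminus\{p_k\}_{k=0}^m,q_0)$ the monodromy matrix $\rho(\ell)$ defined by $\ell^*\vec Y=\rho(\ell)\vec Y$ must preserve $\sum_j|y_j|^2$, so $\rho(\ell)\in U(3)$. Because (\ref{su3-ode}) has no $y''$ term, the Wronskian is constant along analytic continuation, so $\det\rho(\ell)=1$ and $\rho(\ell)\in SU(3)$. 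The monodromy is therefore unitary, and by the converse direction of Lemma \ref{lemma-su3-1}, every $p_k$ is an apparent singularity of (\ref{su3-ode}).

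Finally, I would apply Lemma \ref{lemma-su3-1} to replace the basis by one bringing $N_1,N_2$ into the normal form (\ref{su3-fc4}). The crucial point is that this normalizing conjugation can be taken in $U(3)$, so that it preserves $\sum_j|y_j|^2$. Indeed, $N_1\in SU(3)$ has three distinct eigenvalues $1,\varepsilon,\varepsilon^2$ and hence admits a unitary diagonalization; the residual diagonal freedom $\mathrm{diag}(1,b^{-1},a)$ with $abc=1$ used in the proof of Lemma \ref{lemma-su3-1} may in turn be taken unitary because $N_2\in SU(3)$ forces $|a|=|b|=|c|=1$. The resulting basis still satisfies $e^{-U}=|y_1|^2+|y_2|^2+|y_3|^2$, giving (\ref{fffffff}) together with (\ref{su3-fc4}).
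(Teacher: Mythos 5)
Your proposal is correct and follows essentially the same route as the paper: write $e^{-U}=\sum_{j,k}m_{jk}\overline{f_j}f_k$ for a Hermitian matrix, invoke the positivity argument from \cite{LWY} (which you rightly flag as the genuinely non-formal step), factor $m=\overline{P}^TP$ to get the frame, deduce unitarity of the monodromy from single-valuedness of $e^{-U}$, and then apply Lemma \ref{lemma-su3-1} with a \emph{unitary} normalizing conjugation. Your added observations (Wronskian constancy giving $\det\rho(\ell)=1$, and $|a|=|b|=|c|=1$ making the residual diagonal conjugation unitary) are correct fillings-in of details the paper leaves as "elementary."
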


\begin{proof}
The proof can be adapted easily from \cite{LWY} and we sketch it here for the reader's convenience.

Let $(f_1,f_2,f_3)^{T}$ be a basis of local solutions in $B(q_0, |q_0|/2)$ for ODE (\ref{su3-ode}) associated to $(U,V)$. Since $e^{-U}>0$ solves (\ref{su3-ode}), we have
\[e^{-U}=\sum_{i,j=1}^{3}m_{i,j}\overline{f_i}f_j,\]
where $(m_{i,j})$ is a Hermitian matrix. By similar arguments as in \cite[Section 5.3]{LWY}, it follows that the Hermitian matrix $(m_{i,j})$ is positive, so we can write $(m_{i,j})=\overline{P}^{T}P$ for some invertible matrix $P\in GL(3,\mathbb{C})$. Define a new basis $(y_1,y_2,y_3)^T:=P(f_1,f_2,f_3)^T$, then (\ref{fffffff}) holds.

Let $\ell\in \pi_{1}(E_{\tau}\backslash \{p_k\}_{k=0}^m,q_{0})$ and $\rho(\ell)$ be the corresponding monodromy matrix with respect to $\vec{Y}=(y_1,y_2,y_3)^T$, i.e. (\ref{su3-fc30}) holds.
Since $e^{-U(z)}$ is single-valued and doubly periodic, namely $\ell^*e^{-U(z)}=e^{-U(z)}$, we see from (\ref{fffffff}) that
\[|y_1(z)|^2+|y_2(z)|^2+|y_3(z)|^2=(\overline{y_1(z)},\overline{y_2(z)},\overline{y_3(z)})\overline{\rho(\ell)}^T\rho(\ell)
\begin{pmatrix}{y_1(z)}\\
{y_2(z)}\\
{y_3(z)}\end{pmatrix}.\]
This implies $\overline{\rho(\ell)}^T\rho(\ell)=I_3$, i.e. $\rho(\ell)\in SU(3)$. So
the monodromy group with respect to $(y_1,y_2,y_3)^{T}$ is contained in $SU(3)$, namely the monodromy is unitary.
Then Lemma \ref{lemma-su3-1} applies and so $p_k$ are apparent singularities for all $k$. By $N_1,N_2\in SU(3)$ and together with $N_1N_2=\varepsilon N_2N_1$ as done in Lemma \ref{lemma-su3-1}, it is elementary to prove the existence of a unitary matrix $P_1$ such that \[P_1N_1P_1^{-1}=\left(\begin{smallmatrix}1 & &\\
&\varepsilon & \\
& & \varepsilon^2\end{smallmatrix}\right),\quad P_1N_2P_1^{-1}=\left(\begin{smallmatrix} & &1\\
1& & \\
&1 & \end{smallmatrix}\right).\]
Then by replacing $(y_1,y_2,y_3)^T$ with $P_1(y_1,y_2,y_3)^T$, we finally obtain (\ref{fffffff}) with the corresponding $N_j$ given by (\ref{su3-fc4}).
\end{proof}

\section{Proofs of Theorems \ref{thm3-0} and \ref{thm3}}

In this section, we prove Theorems \ref{thm3-0} and \ref{thm3}.

\subsection{Proof of Theorem \ref{thm3-0}} First
we recall the notion of \emph{dual equation} for later usage.
Consider the following third order linear ODE
\begin{equation}\label{thirdode}
y'''+p_1(z)y'+p_0(z)y=0.
\end{equation}
For any two functions $y_1, y_2$, we denote by
\begin{equation}\label{two-Wron}W(y_1,y_2):=y_1'y_2-y_1y_2'\end{equation}
to be the Wronskian. Then for any two linearly independent solutions $y_1, y_2$ of (\ref{thirdode}), a direct computation shows that $W(y_1, y_2)$ solves the following equation
\begin{equation}\label{thirdode-dual}
h'''+p_1(z)h'+(p_1'(z)-p_0(z))h=0.
\end{equation}
Equation (\ref{thirdode-dual}) is called the \emph{dual equation} of (\ref{thirdode}) in the literature. By direct computations it is easy to prove that
\begin{itemize}
\item[$(\star)$]
\emph{if $y_1, y_2, y_3$ are linearly independent solutions of (\ref{thirdode}), then $W(y_1,y_2)$, $W(y_2,y_3)$, $W(y_3, y_1)$ are linearly independent solutions of the dual equation (\ref{thirdode-dual}).}
\end{itemize}

The notion of dual equations appears naturally from the Toda system.
As in (\ref{operator}), we consider the following linear differential operator
\begin{align*}
(\partial_{z}-U_z)(\partial_{z}+U_z-V_z)(\partial_{z}+V_{z})=\partial_{z}^3+W_2\partial_z+\tilde{W}_3,
\end{align*}
where
\[\tilde{W}_3:=V_{zzz}-2V_zV_{zz}+V_zU_{zz}+U_zV_z^2-V_zU_z^2=W_2'-W_3.\]
Therefore, $e^{-V(z)}$ solves
{\allowdisplaybreaks
\begin{align}\label{3ode-dual}
h'''-&\bigg(\sum_{k=0}^m\alpha_k\wp(z-p_k)+\sum_{k=0}^mB_k
\zeta(z-p_k)+B\bigg)h'\\
-&\bigg(\sum_{k=0}^m(\alpha_k+\beta_k)\wp'(z-p_k)
\nonumber\\
+&\sum_{k=0}^m(B_k+D_k)\wp(z-p_k)+\sum_{k=0}^mA_k
\zeta(z-p_k)+D\bigg)h=0,\nonumber
\end{align}
}%
which is exactly the dual equation of (\ref{su3-ode}). Remark that
 the local exponents of the dual equation (\ref{3ode-dual}) at $p_k$ are
\begin{equation}\label{eq-s-7}
-\gamma_{2,k},\quad \gamma_{2,k}+(n_{2,k}+1),\quad \gamma_{2,k}+(n_{1,k}+n_{2,k}+2).
\end{equation}

Recall Lemma \ref{thm-A} that if the Toda system (\ref{TodaUV}) has a solution $(U,V)$, then there exist associated parameters $\vec{\mathbf B}$ such that $p_k$ are apparent singularities of (\ref{su3-ode}) with this $\vec{\mathbf B}$ for all $k$. Therefore, to prove Theorem \ref{thm3-0} we only need to prove the following result.

\begin{theorem}[=Theorem \ref{thm3-0}]\label{thm-4.1} Suppose $p_k$ are apparent singularities of (\ref{su3-ode}) for some $\vec{\mathbf B}$. Then the Toda system (\ref{TodaUV}) has a unique solution $(U,V)$ corresponding to this $\vec{\mathbf B}$.

\end{theorem}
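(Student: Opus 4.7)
The plan is to reconstruct $(U,V)$ explicitly from the basis of local solutions furnished by Lemma \ref{lemma-su3-1}, reversing the procedure used in Lemma \ref{thm-A}. First I would invoke Lemma \ref{lemma-su3-1} to fix a basis $(y_{1},y_{2},y_{3})^{T}$ of solutions of (\ref{su3-ode}) on $B(q_{0},|q_{0}|/2)$ with monodromies $N_{1},N_{2}\in SU(3)$ of the explicit form (\ref{su3-fc4}); the loop monodromy around each $p_{k}$ is already the scalar $e^{-2\pi i\gamma_{1,k}}I_{3}$. Hence the positive real-analytic function
\[
f(z):=|y_{1}(z)|^{2}+|y_{2}(z)|^{2}+|y_{3}(z)|^{2}
\]
is single-valued on $E_{\tau}\setminus\{p_{k}\}_{k=0}^{m}$, and I would set $U:=-\log f$. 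The local exponents (\ref{localex0}) together with the apparentness of $p_{k}$ give a local basis of the form $(z-p_{k})^{-\gamma_{1,k}+j_{i}}\phi_{i}(z)$ with $j_{i}\in\{0,n_{1,k}+1,n_{1,k}+n_{2,k}+2\}$ and $\phi_{i}$ holomorphic near $p_{k}$ with $\phi_{i}(p_{k})\neq 0$. Thus $f(z)=|z-p_{k}|^{-2\gamma_{1,k}}H_{k}(z)$ with $H_{k}$ smooth and strictly positive, yielding $U(z)=2\gamma_{1,k}\log|z-p_{k}|+O(1)$ with $C^{\infty}$ remainder near each $p_{k}$.

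I would construct $V$ via the dual equation. By property $(\star)$, the Wronskians $W_{ij}:=W(y_{i},y_{j})$ form a basis of solutions of (\ref{3ode-dual}), and their monodromies equal the contragredient matrices $(N_{j}^{-1})^{T}=\overline{N_{j}}$, which are again unitary. Consequently $g(z):=|W_{12}(z)|^{2}+|W_{23}(z)|^{2}+|W_{31}(z)|^{2}$ is positive and single-valued on $E_{\tau}\setminus\{p_{k}\}$, while the local exponents (\ref{eq-s-7}) of the dual equation force $-\log g(z)=2\gamma_{2,k}\log|z-p_{k}|+O(1)$ near each $p_{k}$. Setting $V:=-\log(4g)$, the Hermitian Lagrange identity
\[
|Y|^{2}|Y'|^{2}-|\overline{Y}^{T}Y'|^{2}=\sum_{i<j}|W_{ij}|^{2}
\]
applied with $Y=(y_{1},y_{2},y_{3})^{T}$ directly yields $\partial_{z}\partial_{\bar{z}}\log f=g/f^{2}$, i.e.\ $\Delta U=-4g/f^{2}=-e^{2U-V}$ pointwise on $E_{\tau}\setminus\{p_{k}\}$. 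Combining this with the distributional identity $\Delta(2\gamma_{1,k}\log|z-p_{k}|)=4\pi\gamma_{1,k}\delta_{p_{k}}$ recovers the first equation of (\ref{TodaUV}). The second Toda equation is obtained by the symmetric argument applied to the dual equation, whose basis of Wronskians reproduces $(y_{1},y_{2},y_{3})$ up to the same scaling.

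For uniqueness, suppose $(U_{1},V_{1})$ and $(U_{2},V_{2})$ both correspond to the same $\vec{\mathbf B}$. By Lemma \ref{thm-A} both yield the same ODE (\ref{su3-ode}), hence by Lemma \ref{lemma-su3-1} their normalized bases differ only by a common scalar $c\neq 0$; the reconstruction above then gives $U_{1}-U_{2}=-2\log|c|$ and $V_{1}-V_{2}=-4\log|c|$. Denoting these constants by $a$ and $b$, subtracting the two Toda equations forces $e^{2a-b}=e^{2b-a}=1$, so $a=b=0$. The principal obstacle I anticipate is the careful distributional bookkeeping: one must pin down the normalization constant relating $g$ to $e^{-V}$, verify the Lagrange identity and its dual counterpart precisely, and match the logarithmic asymptotics of $U$ and $V$ against the Dirac sources $4\pi\gamma_{j,k}\delta_{p_{k}}$ in (\ref{TodaUV}). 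Once this matching is in place, the argument becomes a textbook reconstruction of a Toda solution from its unitary $SU(3)$ monodromy.
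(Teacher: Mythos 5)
Your proposal follows essentially the same route as the paper: take the basis with unitary monodromy from Lemma \ref{lemma-su3-1}, build $e^{-U}$ from $\sum_j|y_j|^2$ and $e^{-V}$ from the sum of squared Wronskians, verify the two Toda equations via the Lagrange identity, match the asymptotics at the $p_k$'s using the local exponents of (\ref{su3-ode}) and of the dual equation (\ref{3ode-dual}), and settle uniqueness through the scalar ambiguity of the basis. The paper packages the same computation as the minor identity $R_i\partial_{z\bar z}R_i-\partial_zR_i\partial_{\bar z}R_i=R_{i-1}R_{i+1}$ for $R=|W|^{-2/3}W\overline{W}^T$, and your uniqueness argument (constant differences $a,b$ must satisfy $2a-b=2b-a=0$) is a slightly cleaner phrasing of the paper's Step 3.

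The one point you must actually repair is the normalization, which you flag at the end but which is wrong as written. With $U=-\log f$ and $V=-\log(4g)$ the first equation $\Delta U+e^{2U-V}=0$ does hold, but applying the Lagrange identity to the dual basis gives $g\,\partial_{z\bar z}g-\partial_zg\,\partial_{\bar z}g=|\det W|^2f$ (since $W(W_{12},W_{23})=-(\det W)\,y_2$ and cyclically), so $\Delta V=-4|\det W|^2f/g^2$ while $e^{2V-U}=f/(16g^2)$; the second equation therefore holds only if $|\det W|=1/8$, which the basis supplied by Lemma \ref{lemma-su3-1} has no reason to satisfy, being canonical only up to a common scalar. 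The fix is either to use that scalar freedom to force $|\det W|=1/8$ (rescaling by $c$ multiplies $\det W$ by $c^3$, so this pins down $|c|$ and hence $f$), or, as the paper does, to insert the scale-invariant factors $e^{-U}=\frac14|W|^{-2/3}\sum_j|y_j|^2$ and $e^{-V}=\frac14|W|^{-4/3}\sum_{i<j}|W_{ij}|^2$, after which both equations close with coefficient exactly $1$.
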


\begin{proof} Under our assumption, it follows from Lemma \ref{lemma-su3-1} that (\ref{su3-ode}) has a basis of local solutions $\vec{Y}=(y_1,y_2,y_3)^T$ in $B(q_0,|q_0|/2)$ such that the associated monodromy matrices $N_1, N_2$ are given by (\ref{su3-fc4}).

{\bf Step 1.}
Define the Wronskian matrix of $(y_1,y_2,y_3)$:
\[W:=\begin{pmatrix}y_1 & y_2 & y_3\\y_1' & y_2' & y_3'\\ y_1'' & y_2'' & y_3''\end{pmatrix}.\]
Then $|W|:=|\det W|$ is a positive constant. Define a positive definite matrix
\[R:=|W|^{-\frac{2}{3}}W
\overline{W}^T.\]
Clearly $\det R=1$.
For $1\leq i\leq 3$, we let $R_{i}$ denote the leading principal minor of $R$ of dimension $i$, and define
\begin{align}e^{-U(z)}:=&\frac{1}{4}R_{1}
=\frac{1}{4}|W|^{-\frac{2}{3}}\sum_{j=1}^3|y_j(z)|^2,\;\, z\in B(q_0,|q_0|/2),
\label{n-eq-52}\end{align}
\[ e^{-V(z)}:=\frac{1}{4}R_{2},\quad z\in B(q_0,|q_0|/2).\]
Since $R$ is positive definite, we have $R_{i}>0$ and so $e^{-U}, e^{-V}$ are well-defined in $B(q_0,|q_0|/2)$.
Since $y_j$'s are all holomorphic in $B(q_0,|q_0|/2)$, it is easy to prove (cf. \cite{LWY, Nie12}) that
\begin{equation}\label{n-Rm}
R_{i}(\partial_{z\bar{z}}R_{i})-(\partial_zR_{i})
(\partial_{\bar{z}}R_{i})=R_{{i-1}}R_{i+1},\;\text{for }  i=1,2,
\end{equation}
where $R_{0}:=1$. Note that $R_{3}=\det R= 1$. Letting $i=1$ in (\ref{n-Rm}) leads to
\begin{align}\label{n-V1-eq}
4e^{-V}&=R_{2}=16[e^{-U}
(\partial_{z\bar{z}}e^{-U})
-(\partial_ze^{-U})(\partial_{\bar{z}}e^{-U})]\\
&=-16e^{-2U}\partial_{z\bar{z}}U=-4e^{-2U}\Delta U,\nonumber
\end{align}
and letting $i=2$ in (\ref{n-Rm}) leads to
\begin{align*}
4e^{-U}&=R_{1}=16[e^{-V}
(\partial_{z\bar{z}}e^{-V})
-(\partial_ze^{-V})(\partial_{\bar{z}}e^{-V})]\\
&=-16e^{-2V}\partial_{z\bar{z}}V=-4e^{-2V}\Delta V.
\end{align*}
This proves that $(U,V)$ is a solution of the Toda system (\ref{TodaUV}) in $B(q_0,|q_0|/2)$. Now since the monodromy group with respect to $\vec{Y}=(y_1,y_2,y_3)^T$ is contained in $SU(3)$, it follows from (\ref{n-eq-52}) that for any loop $\ell\in \pi_1(E_{\tau}\setminus\{0\}, q_0)$, the analytic continuation of $e^{-U(z)}$ along $\ell$ is invariant: $\ell^* e^{-U(z)}=e^{-U(z)}$, so after analytic continuation $e^{-U(z)}$ is well-defined on $E_{\tau}$ and then so does $e^{-V(z)}$. Therefore, $(U,V)$ is a solution of the Toda system (\ref{TodaUV}) in $E_{\tau}\setminus\{p_k\}_{k=0}^m$ (because $e^{-U(z)},e^{-V(z)}<+\infty$ in $E_{\tau}\setminus\{p_k\}_{k=0}^m$).

{\bf Step 2.} We prove that $(U,V)$ is a solution of (\ref{TodaUV}) in $E_{\tau}$.
Clearly
this assertion is equivalent to prove that for each $k$,
\begin{equation}\label{ff11}U(z)=2\gamma_{1,k}\ln |z-p_k|+O(1)\quad\text{as} \; z\to p_k,\end{equation}
\begin{equation}\label{ff11-1}V(z)=2\gamma_{2,k}\ln |z-p_k|+O(1)\quad\text{as} \; z\to p_k.\end{equation}

Fix any $k$. By the argument of analytic continuation in the last paragraph of Step 1, there is a small neighborhood $B(p_k,\varepsilon_k)$ of $p_k$ and a basis of local solutions $(\tilde{y}_1,\tilde{y}_2,\tilde{y}_3)$ in $B(p_k,\varepsilon_k)$ (namely this $(\tilde{y}_1,\tilde{y}_2,\tilde{y}_3)$ defined in $B(p_k,\varepsilon_k)$ is an analytic continuation of the original $(y_1,y_2,y_3)$ defined in $B(q_0, |q_0|/2)$) such that
\begin{align}e^{-U}=\frac{1}{4}|W|^{-\frac{2}{3}}(|\tilde{y}_1|^2+|\tilde{y}_2|^2+|\tilde{y}_3|^2)\quad\text{in }\; B(p_k,\varepsilon_k).
\label{nnn-eq-520}\end{align}

Since $(\tilde{y}_1, \tilde{y}_2, \tilde{y}_3)^{T}$ is a basis of local solutions in $B(p_k,\varepsilon_k)$, it follows from (\ref{localex0}) that
\begin{equation}\label{n-2fz0}\tilde{y}_j(z)\backsim (z-p_k)^{-\gamma_{1,k}}\quad \text{as $z\to p_k$},\quad \text{for some $j\in \{1,2,3\}$},\end{equation}
\begin{equation}\label{n-2fz0-1}(z-p_k)^{\gamma_{1,k}}\tilde{y}_j(z)=O(1)\quad \text{as $z\to p_k$},\quad \text{for all $j$}.\end{equation}
From here and (\ref{nnn-eq-520}) we obtain (\ref{ff11}).

To study the asymptotic behavior of $V$ near $0$, we insert (\ref{nnn-eq-520}) into (\ref{n-V1-eq}), which leads to
\begin{align*}
&\frac{1}{4}e^{-V}
=e^{-U}(\partial_{z\bar{z}}e^{-U})
-(\partial_ze^{-U})(\partial_{\bar{z}}e^{-U})\\
=&\frac{1}{16}|W|^{-\frac{4}{3}}[(\sum |\tilde{y}_j|^2)(\sum |\tilde{y}_j'|^2)-(\sum \tilde{y}_j'\overline{\tilde{y}_j})(\sum \tilde{y}_j\overline{\tilde{y}_j'})]\\
=&\frac{1}{16}|W|^{-\frac{4}{3}}[|W(\tilde{y}_1,\tilde{y}_2)|^2+|W(\tilde{y}_2,\tilde{y}_3)|^2
+|W(\tilde{y}_3, \tilde{y}_1)|^2],
\end{align*}
where as pointed out in (\ref{two-Wron}), $W(\tilde{y}_i, \tilde{y}_j):=\tilde{y}_i'\tilde{y}_j-\tilde{y}_j'\tilde{y}_i$, $i\neq j$, is a nontrivial solution of the dual equation (\ref{3ode-dual}).
Since $W(\tilde{y}_i, \tilde{y}_j)$'s form a basis of (\ref{3ode-dual}), it follows from (\ref{eq-s-7}) that
\[
W(\tilde{y}_i, \tilde{y}_j)(z)\sim (z-p_k)^{-\gamma_{2,k}} \;\, \text{as $z\to p_k$},\;\, \text{for some } (i,j),
\]
\[(z-p_k)^{\gamma_{2,k}}W(\tilde{y}_i, \tilde{y}_j)(z)=O(1)\;\, \text{as $z\to p_k$},\;\, \text{for all } (i,j).\]
Thus (\ref{ff11-1}) holds.
This proves that $(U,V)$ is a solution of (\ref{TodaUV}).

{\bf Step 3.} We prove that this $(U,V)$ is the unique solution of (\ref{TodaUV}) corresponding to this $\vec{\mathbf B}$.

Let $(\tilde{U}, \tilde{V})$ be a solution of (\ref{TodaUV}) corresponding to this $\vec{\mathbf B}$. We need to show $(\tilde{U}, \tilde{V})=(U, V)$.
By Lemmas \ref{lemma-su3-1} and \ref{thm-A}, we have
\begin{align*}
e^{-\tilde{U}(z)}
=a(|y_1|^2+|y_2|^2+|y_3|^2)\quad \text{in  }\;B(q_0,|q_0|/2)
\end{align*}
for some constant $a>0$, so we need to prove $a=\frac14|W|^{-\frac{2}{3}}$.

Indeed, we see from (\ref{n-eq-52}) that
\[R_1=|W|^{-\frac{2}{3}}a^{-1} e^{-\tilde{U}}.\]
Inserting this into (\ref{n-Rm}) with $i=1$, we obtain
\begin{align*}
R_{2}&=|W|^{-\frac{4}{3}}a^{-2}[e^{-\tilde{U}}
(\partial_{z\bar{z}}e^{-\tilde{U}})
-(\partial_ze^{-\tilde{U}})(\partial_{\bar{z}}e^{-\tilde{U}})]\\
&=-|W|^{-\frac{4}{3}}a^{-2}e^{-2\tilde{U}}\partial_{z\bar{z}}\tilde{U}
=\frac{1}{4}|W|^{-\frac{4}{3}}a^{-2}e^{-\tilde{V}}.
\end{align*}
Inserting these into (\ref{n-Rm}) with $i=2$, we obtain
\begin{align*}
|W|^{-\frac{2}{3}}a^{-1} e^{-\tilde{U}}&=R_{1}=\frac{|W|^{-\frac{8}{3}}a^{-4}}{16}[e^{-\tilde{V}}
(\partial_{z\bar{z}}e^{-\tilde{V}})
-(\partial_ze^{-\tilde{V}})(\partial_{\bar{z}}e^{-\tilde{V}})]\\
&=-\frac{|W|^{-\frac{8}{3}}a^{-4}}{16}e^{-2\tilde{V}}\partial_{z\bar{z}}\tilde{V}
=\frac{|W|^{-\frac{8}{3}}a^{-4}}{64}e^{-\tilde{U}},
\end{align*}
so $a=\frac14|W|^{-\frac{2}{3}}$, i.e. $e^{-\tilde{U}}=e^{-U}$ and so $(\tilde{U}, \tilde{V})=(U, V)$.
\end{proof}

\subsection{Proof of Theorem \ref{thm3}}

In this section, we consider the Toda system (\ref{Toda}), which is equivalent to
\begin{equation}\label{TodaUV1}
\begin{cases}
\Delta U+e^{2U-V}=4\pi \gamma_{1}\delta_{0}\quad\text{ on
}\; E_{\tau},\\
\Delta V+e^{2V-U}=4\pi \gamma_{2}\delta_{0}\quad\text{ on
}\; E_{\tau},\\
\gamma_1=\frac{2n_1+n_2}{3},\quad \gamma_2=\frac{n_1+2n_2}{3}.
\end{cases}
\end{equation}
Note that for this system, we have $m=0$ and $p_0=0$, so $\mathcal{N}_j=n_j$ and $n_1\not\equiv n_2\mod 3$, so $\varepsilon=e^{-2\pi i \gamma_1}$. Recall that we always assume $n_1<n_2$.

Clearly the associated ODE (\ref{su3-ode}) becomes (note (\ref{fc-residue}))
\begin{equation}\label{3ode}y'''-(\alpha\wp(z)+B)y'+(\beta\wp'(z)+D_0\wp(z)+D)y=0,
\end{equation}
where
\[
\alpha=\gamma_1(\gamma_1+1)+\gamma_2(\gamma_2+1)-\gamma_1\gamma_2,
\]\[
\beta=-\frac{2\gamma_1(\gamma_1+1)+\gamma_1\gamma_2(\gamma_1-\gamma_2-1)}{2}.\]
Furthermore, if $(U,V)$ is an \emph{even} solution of (\ref{TodaUV1}), then $W_3=\beta\wp'(z)+D_0\wp(z)+D$ is odd, so $D_0=D=0$, i.e. the associated ODE (\ref{3ode}) of an even solution becomes
\begin{equation}\label{3ode-even}y'''-(\alpha\wp(z)+B)y'+\beta\wp'(z)y=0.\end{equation}

The (\ref{3ode}) has a regular singularity $0$ with local exponents
$\rho_{j}\notin\mathbb{Z}$:
\begin{equation}\label{localex0-1}
\rho_{1}=-\gamma_{1}, \;\, \rho_{2}=-\gamma_{1}+(n_{1}+1),\;\, \rho_{3}=-\gamma_{1}+(n_{1}+n_{2}+2),
\end{equation}
so any solution of (\ref{3ode}) has a branch point at $0$. Now the monodromy representation of (\ref{3ode}) is a group homomorphism $\rho:\pi_{1}(  E_{\tau}
\backslash \{0\}, q_{0})  \rightarrow
SL(3,\mathbb{C})$ and
\[
\ell_{1}\ell_{2}\ell_{1}^{-1}\ell_{2}^{-1}=\varsigma_{0}\text{ \  \ in
}\pi_{1}(  E_{\tau}\backslash\{0\}
,q_{0})  .
\]

Again if $0$ is an apparent singularity, then the local monodromy matrix $M_{0}$ is given by
\begin{equation}\label{su3-fc2-0}M_0=e^{-2\pi i \gamma_{1}}I_3=\varepsilon I_3,\end{equation}
which implies $\rho(\varsigma_{0})=\varepsilon I_3$ and again
\begin{equation}\label{su3-fc3-0}N_1N_2N_1^{-1}N_2^{-1}=\varepsilon I_3,\quad\text{i.e. }\;N_1N_2=\varepsilon N_2N_1.\end{equation}

\begin{proof}[Proof of Theorem \ref{thm3}]
By Theorem \ref{thm3-0} and the above argument, we only need to prove that the solution $(U,V)$ constructed in Theorem \ref{thm-4.1} is even if $D_0=D=0$.

Let us turn back to the proof of Theorem \ref{thm-4.1}. Since $\mathbb{Z}+\mathbb{Z}\tau$ is the set of all branch points of ODE (\ref{3ode}), we consider
\[L:=[-1,1]\cup\bigcup_{i=0}^{+\infty}([2+3i,4+3i]\cup[-4-3i,-2-3i])\subset\mathbb{R},\]
and
\[\Xi:=\mathbb{C}\setminus(L+\mathbb{Z}\tau).\]
Then $\Xi$ has no intersection with $\mathbb{Z}+\mathbb{Z}\tau$, and is path-connected, symmetric with respect to $z\leftrightarrow-z$ and $z\leftrightarrow z+\tau$. Note that each line in $\Xi$ contains exactly $3$ points of $\mathbb{Z}+\mathbb{Z}\tau$. By (\ref{su3-fc2-0}) (i.e. the local monodromy matrix at each point of $\mathbb{Z}+\mathbb{Z}\tau$ is $\varepsilon I_3$) and $\varepsilon^3=1$, it follows that the local solutions $y_j(z)$ in $B(q_0,|q_0|/2)$ can be extended to be \emph{single-valued holomorphic} functions (still denoted by $y_j(z)$) in  $\Xi$. Then
\begin{align}e^{-U}=\frac{1}{4}|W|^{-\frac{2}{3}}(|y_1|^2+|y_2|^2+|y_3|^2)\quad\text{in }\; \Xi.
\label{n-eq-520}\end{align}

Since $D_0=D=0$, ODE (\ref{3ode}) becomes (\ref{3ode-even}), i.e. is invariant with respect to $z\leftrightarrow-z$.
Since $\Xi=\mathbb{C}\setminus(L+\mathbb{Z}\tau)$ is symmetric with respect to $z\leftrightarrow-z$, $y_j(-z)$ is also a well-defined solution of (\ref{3ode}) in $\Xi$. So there is a invertible matrix $M$ such that
\begin{equation}\label{su3-fc16}
\vec{Y}(-z)=M\vec{Y}(z),\quad \forall z\in\Xi.
\end{equation}
It is easy to see $\det M=-1$.
It suffices to prove $M\in U(3)$, i.e. $\overline{M}^TM=I_3$. Once this holds, it follows immediately from (\ref{n-eq-520}) that $U(z)=U(-z)$ and so $V(z)=V(-z)$.

Since $\Xi$ is symmetric with respect to $z\leftrightarrow z+\tau$, $y_j(z+\tau)$ is also a well-defined solution of (\ref{3ode}) in $\Xi$. So there is a invertible matrix $\tilde{N}_2$ such that
\begin{equation}
\vec{Y}(z+\tau)=\tilde{N}_2\vec{Y}(z),\quad \forall z\in\Xi.
\end{equation}
By the definition of $\Xi$, the choice of $q_0$ and the fundamental circle $\ell_2$ in Remark \ref{remark11}, we have
$\tilde{N}_2=\varepsilon^2 N_2$, i.e.
\begin{equation}\label{su3-fc12}
\vec{Y}(z+\tau)=\varepsilon^2N_2\vec{Y}(z),\quad \forall z\in\Xi.
\end{equation}
Define
\[\Omega_{+}:=\{a+b\tau \,|\, b\in (0,1), a\in\mathbb{R}\}\subset \Xi,\]
\[\Omega_{-}:=\{a+b\tau \,|\, b\in (-1,0), a\in\mathbb{R}\}\subset \Xi,\]
Since $\Omega_{\pm}=\Omega_{\pm}+1$, $y_j(z+1)$ is also a well-defined solution for $z\in\Omega_{\pm}$, so there are invertible matrices $N_{\pm}$ such that
\[\vec{Y}(z+1)=N_{\pm}\vec{Y}(z),\quad \forall z\in\Omega_{\pm}.\]
By $q_0\in \Omega_-$ and the definition of the fundamental circle $\ell_1$, we have $N_-=N_1$, i.e.
\begin{equation}\label{su3-fc14}\vec{Y}(z+1)=N_{1}\vec{Y}(z),\quad \forall z\in\Omega_{-}.\end{equation}
Recalling the definition of $\Omega$ and (\ref{su3-fc2-0}), it is easy to see that $N_{+}^{-1}N_1=M_0=\varepsilon I_3$, so $N_+=\varepsilon^{-1}N_1$, i.e.
\begin{equation}\label{su3-fc15}\vec{Y}(z+1)=\varepsilon^{-1} N_{1}\vec{Y}(z),\quad \forall z\in\Omega_{+}.\end{equation}
Remark that (\ref{su3-fc15}) can be also proved in a different way:
Since $z-\tau\in\Omega_-$ for $z\in \Omega_+$, we have for $z\in \Omega_+$ that
\begin{align*}
\vec{Y}(z+1)&=\vec{Y}(z-\tau+1+\tau)=\varepsilon^2N_2\vec{Y}(z-\tau+1)\quad \text{by (\ref{su3-fc12})}\\
&=\varepsilon^2 N_2N_1\vec{Y}(z-\tau)\quad \text{by (\ref{su3-fc14})}\\
&=N_2 N_1N_2^{-1}\vec{Y}(z)\quad \text{by (\ref{su3-fc12})}\\
&=\varepsilon^{-1} N_1\vec{Y}(z)\quad \text{by (\ref{su3-fc3-0})}.
\end{align*}

Therefore, for $z\in \Omega_-$ we have
\begin{align*}
\vec{Y}(-(z+1))&=M\vec{Y}(z+1)\quad \text{by (\ref{su3-fc16})}\\
&=M N_1\vec{Y}(z)\quad \text{by (\ref{su3-fc14})},
\end{align*}
and (note $-z\in\Omega_+$)
\begin{align*}
\vec{Y}(-(z+1))&=\vec{Y}(-z-1)
=\varepsilon N_{1}^{-1}\vec{Y}(-z)\quad \text{by (\ref{su3-fc15})}\\
&=\varepsilon N_{1}^{-1}M\vec{Y}(z)\quad \text{by (\ref{su3-fc16})},
\end{align*}
so $M N_1=\varepsilon N_{1}^{-1}M$. From here and (\ref{su3-fc4}), a direct computation gives
\[M=\left(\begin{smallmatrix} &a&\\
b&& \\
& &c \end{smallmatrix}\right)\quad\text{with }abc=-\det M=1.\]
On the other hand, by applying (\ref{su3-fc16}) and (\ref{su3-fc12}) to $\vec{Y}(-(z+\tau))$, we obtain \[M\varepsilon^2 N_2=\varepsilon^{-2} N_2^{-1} M,\] from which and (\ref{su3-fc4}) we easily obtain $c=\varepsilon b =\varepsilon ^2 a$, so $1=abc=\varepsilon^3 a^3=a^3$, i.e. $a\in \{1,\varepsilon,\varepsilon^2\}$ and
\[M=a\left(\begin{smallmatrix} &1&\\
\varepsilon& &\\
& & \varepsilon^2\end{smallmatrix}\right).\]
This proves $M\in U(3)$ and so $(U(z),V(z))$ is an even solution.

The proof is complete.
\end{proof}

\section{Proofs of Theorems \ref{thm6} and \ref{thm4}-\ref{thm5}}

In this section, we prove Theorems \ref{thm6}, \ref{thm4}-\ref{thm5} and also give an explanation of Conjecture \ref{conj-1}.

\subsection{Proof of Theorem \ref{thm6}}
By Theorem \ref{thm3-0} we need to study the condition on \[\vec{\mathbf B}=(A_0,\cdots,A_m,B_0,\cdots,B_m,B,D_0,\cdots,D_m,D)\] such that $p_k$ are apparent singularities of (\ref{su3-3-ode}) for all $k$. Here we apply the standard Frobenius' method (see e.g. \cite[Section 1.3 in Chapter 1]{GP}) to study the apparent condition. In the sequel we use the notations
\[\zeta_{kl}=\zeta(p_k-p_l),\quad \wp_{kl}^{(n)}=\wp^{(n)}(p_k-p_l),\quad\forall n\geq 0,\]
whenever $k\neq l$. We also use the notation $\mathbb{Q}[g_2,g_3,\zeta_{kl},\wp^{(n)}_{kl}][\vec{\mathbf B}]$ to denote
\[\mathbb{Q}[g_2,g_3,\{\zeta_{kl}: l\neq k\},\{\wp^{(n)}_{kl}: l\neq k, n\geq 0\}][\vec{\mathbf B}]\]
just for convenience.

\begin{lemma}\label{lemma-su3-33} Fix any $k$. Then there are three polynomials
\[P_{k,1}(\vec{\mathbf B})=r_1D_k^{n_{1,k}+1}+\cdots\in\mathbb{Q}[g_2,g_3,\zeta_{kl},\wp^{(n)}_{kl}][\vec{\mathbf B}],\;r_1\in\mathbb{Q}\setminus\{0\},\] \[P_{k,2}(\vec{\mathbf B})=r_2D_k^{n_{2,k}+1}+\cdots\in\mathbb{Q}[g_2,g_3,\zeta_{kl},\wp^{(n)}_{kl}][\vec{\mathbf B}],\;r_2\in\mathbb{Q}\setminus\{0\},\]
\[P_{k,3}(\vec{\mathbf B})=r_3A_kD_k^{n_{1,k}+n_{2,k}}+\cdots\in\mathbb{Q}[g_2,g_3,\zeta_{kl},\wp^{(n)}_{kl}][\vec{\mathbf B}],\;r_3\in\mathbb{Q}\setminus\{0\},\]
with homogeneous weights
\begin{equation}
\operatorname{Weig} P_{k,1}=n_{1,k}+1,\; \operatorname{Weig} P_{k,2}=n_{2,k}+1,\; \operatorname{Weig} P_{k,3}=n_{1,k}+n_{2,k}+2,
\end{equation}
such that $p_k$ is an apparent singularity of ODE (\ref{su3-3-ode}) if and only if
 \[P_{k,1}(\vec{\mathbf B})=P_{k,2}(\vec{\mathbf B})=P_{k,3}(\vec{\mathbf B})=0.\]
Here the weights of $B_i,D_i,\zeta_{kl},A_j, B, D, g_2, g_3, \wp_{kl}^{(n)}$ are $1,1,1,2,2,3, 4, 6, n+2$ respectively.
\end{lemma}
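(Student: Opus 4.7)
The plan is to localize the ODE (\ref{su3-3-ode}) at $p_k$ and apply the classical Frobenius method. Setting $w=z-p_k$, I would expand $\wp(z-p_l),\wp'(z-p_l),\zeta(z-p_l)$ for $l\ne k$ as Taylor series in $w$, with coefficients lying in $\mathbb{Q}[g_2,g_3,\zeta_{kl},\wp^{(n)}_{kl}]$. Only the $k$-th summand contributes singular terms at $p_k$: the coefficient of $y'$ has leading Laurent part $-\alpha_k/w^2-B_k/w$ and the coefficient of $y$ has leading part $-2\beta_k/w^3+D_k/w^2+A_k/w$. The indicial polynomial is therefore
\begin{equation*}
F_k(s)=s(s-1)(s-2)-\alpha_k s-2\beta_k,
\end{equation*}
whose three roots are the exponents $\rho_{k,1},\rho_{k,2},\rho_{k,3}$ of (\ref{localex0}), with pairwise integer differences $n_{1,k}+1$, $n_{2,k}+1$ and $n_{1,k}+n_{2,k}+2$.

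Substituting the Frobenius ansatz $y=w^{\rho}\sum_{j\ge 0}c_j w^j$ produces a recursion
\begin{equation*}
F_k(\rho+j)\,c_j=\Phi_{k,j}^{(\rho)}\bigl(c_0,\ldots,c_{j-1};\vec{\mathbf B}\bigr),
\end{equation*}
in which $\Phi_{k,j}^{(\rho)}$ is linear in the earlier $c_i$ and polynomial in $\vec{\mathbf B}$ with coefficients in $\mathbb{Q}[g_2,g_3,\zeta_{kl},\wp_{kl}^{(n)}]$. Starting from the largest exponent $\rho_{k,3}$ the recursion is unobstructed. Starting from $\rho_{k,2}$ with $c_0=1$, the only resonance is at $j=n_{2,k}+1$, where $F_k(\rho_{k,2}+j)=F_k(\rho_{k,3})=0$, and the solvability obstruction defines $P_{k,2}(\vec{\mathbf B})$. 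Starting from $\rho_{k,1}$ with $c_0=1$, two resonances occur, at $j=n_{1,k}+1$ giving $P_{k,1}$ and at $j=n_{1,k}+n_{2,k}+2$ giving $P_{k,3}$ (the free parameter $c_{n_{1,k}+1}$ introduced once $P_{k,1}=0$ may be normalized to $0$, since any other choice merely adds a multiple of $P_{k,2}$ to the second obstruction). The standard Fuchsian criterion (cf.\ \cite{GP}) then yields that $p_k$ is apparent iff $P_{k,1}=P_{k,2}=P_{k,3}=0$.

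The weight claim follows from homogeneity. Under the scaling $z\mapsto\lambda z$ the functions $\wp,\wp',\zeta,g_2,g_3$ carry weights $2,3,1,4,6$, and the weights declared in the lemma make each line of the ODE weight-homogeneous. An induction on the recursion shows $c_j$ has weight $j$, so an obstruction at resonance index $r$ is a weight-$r$ polynomial in $\vec{\mathbf B}$; this exactly matches the three claimed weights. For the leading coefficients, I would isolate the monomials coming purely from iterating the $D_k/w^2$ piece of the coefficient of $y$ (together with one insertion of the $A_k/w$ piece in the case of $P_{k,3}$): any other route to a monomial of the prescribed weight must involve a strictly positive power of another coordinate of $\vec{\mathbf B}$ or of a holomorphic Taylor coefficient, so cannot contribute to the pure $D_k$-powers nor to the $A_kD_k^{\bullet}$-monomial. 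The resulting scalar recursion yields an explicit product formula whose factors are of the form $1/F_k(\rho_{k,i}+i')$ at non-resonant integer shifts $i'$, each manifestly nonzero, so $r_1,r_2,r_3\in\mathbb{Q}\setminus\{0\}$.

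The main obstacle is the nonvanishing of $r_1,r_2,r_3$: while the weight count is immediate from homogeneity, one has to verify that no accidental cancellation occurs among the pure-$D_k$ (resp.\ $A_kD_k^{\bullet}$) contributions. The cleanest route is to evaluate the Frobenius recursion on the specialization of $\vec{\mathbf B}$ in which every coordinate except $D_k$ (and $A_k$, for $P_{k,3}$) is set to zero, reducing the problem to a one-dimensional scalar recursion whose explicit product formula exhibits the leading monomial with a nonzero rational coefficient; the nonvanishing then reduces to $F_k$ not vanishing at any non-resonant integer shift of the starting exponent, which is immediate from the integer spacing of the three roots $\rho_{k,1},\rho_{k,2},\rho_{k,3}$ of $F_k$.
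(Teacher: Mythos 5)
Your proposal is correct and follows essentially the same route as the paper: the Frobenius recursion at $p_k$, the three resonance obstructions as the defining polynomials, weight-homogeneity by induction on the recursion, and identification of the leading $D_k^{\bullet}$ (resp.\ $A_kD_k^{\bullet}$) monomials by tracking the pure $D_k$-chain through non-resonant steps. The only cosmetic difference is that you obtain $P_{k,2}$ from a separate recursion started at $\rho_{k,2}$, whereas the paper extracts it as the coefficient of the free parameter $c_{n_{1,k}+1}$ in the obstruction at the top resonance of the single recursion from $\rho_{k,1}$; these agree up to a nonzero constant.
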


\begin{proof}Recalling the local exponents $\rho_{k,j}$ in (\ref{localex0}), it is standard by Frobenius theory that $p_k$ is an apparent singularity if and only if ODE (\ref{su3-3-ode}) has local solutions of the form
\begin{equation}\label{su3-3-fc41}
y(z)=u^{\rho_{k,1}}\sum_{j=0}^{+\infty}c_j u^j,\quad u:=z-p_k
\end{equation}
with $c_0, c_{n_{1,k}+1}, c_{n_{1,k}+n_{2,k}+2}$ being arbitrary (this corresponds to the dimension of solutions being $3$).

Recall the well-known Laurent expansions of $\wp(z),\zeta(z)$:
\[\wp(z-p_k)=\sum_{j=0}^{+\infty}b_j u^{j-2},\quad \zeta(z-p_k)=-\sum_{j=0}^{+\infty}\frac{b_j}{j-1} u^{j-1},\]
where $b_0=1, b_2=0$, $b_{j}=0$ for all odd $j$ (here $\frac{b_j}{j-1}:=0$ for $j=1$) and $b_{j}\in \mathbb{Q}[g_2,g_3]$ is of homogeneous weight $j$ for all even $j\geq 4$. For example, $b_4=\frac{g_2}{20}$, $b_6=\frac{g_3}{28}$ etc.
Furthermore, for $l\neq k$,
\[\zeta(z-p_l)=\zeta_{kl}-\sum_{j=1}^{\infty}\frac{1}{j!}\wp_{kl}^{(j-1)}u^j,\]
\[\wp(z-p_l)=\wp_{kl}^{(0)}+\sum_{j=1}^{\infty}\frac{1}{j!}\wp_{kl}^{(j)}u^j,\]
\[\wp'(z-p_l)=\wp_{kl}^{(1)}+\sum_{j=1}^{\infty}\frac{1}{j!}\wp_{kl}^{(j+1)}u^j.\]
Inserting (\ref{su3-3-fc41}) and these Laurent or Taylor expansions into ODE (\ref{su3-3-ode}), a slightly complicate but direct computation leads to
{\allowdisplaybreaks
\begin{align}\label{eq-appapp}
&\sum_{j=0}^{\infty}\bigg(\phi_jc_j+[D_k-(j+\rho_{k,1}-1)B_k]c_{j-1}\\
&-\Big[(j+\rho_{k,1}-2)B-A_k+(j+\rho_{k,1}-2)\sum_{l\neq k}(\alpha_l\wp_{kl}^{(0)}+B_l\zeta_{kl})\Big]c_{j-2}\nonumber\\
&+\Big[D-(j+\rho_{k,1}-3)\sum_{l\neq k}(\alpha_l\wp_{kl}^{(1)}-B_l\wp_{kl}^{(0)})+\sum_{l\neq k}(\beta_l\wp_{kl}^{(1)}+D_l\wp_{k,l}^{(0)}+A_l\zeta_{kl})\Big]c_{j-3}\nonumber\\
&-\sum_{i=4}^j[(j+\rho_{k,1}-i)\alpha-(i-2)\beta]b_ic_{j-i}
+\sum_{i=4}^{j-1}(D_k+\tfrac{j+\rho_{k,1}-i-1}{i-1}B_k)b_ic_{j-1-i}\nonumber\\
&-\sum_{i=4}^{j-2}\tfrac{A_k}{i-1}b_ic_{j-2-i}
+\sum_{i=1}^{j-3}\tfrac1{i!}\sum_{l\neq k}\big(\beta_l\wp_{kl}^{(i+1)}+D_l\wp_{k,l}^{(i)}-A_l\wp_{kl}^{(i-1)}\big)c_{j-3-i}\nonumber\\
&-\sum_{i=2}^{j-2}(j+\rho_{k,1}-i-2)\sum_{l\neq k}(\alpha_l\wp_{kl}^{(i)}-B_l\wp_{kl}^{(i-1)})c_{j-2-i}\bigg)
u^{j+\rho_{k,1}-3}=0,\nonumber
\end{align}
}%
where $c_{-j}:=0$ for $j\geq 1$ and
\begin{align*}
\phi_j:=&(j+\rho_{k,1})(j+\rho_{k,1}-1)(j+\rho_{k,1}-2)-\alpha_k(j+\rho_{k,1})-2\beta_k\\
=&j(j-n_{1,k}-1)(j-n_{1,k}-n_{2,k}-2).
\end{align*}
Remark that in (\ref{eq-appapp}), the knowledge of the explicit complicate-looking expression of the coefficient of $c_{j-i}$ with $i\geq 3$ is not necessary in our following argument; the only important thing is that \emph{the coefficient of $c_{j-i}$ is of homogeneous weight $i$} under our setting of the weights for $B_l, D_l$ etc.

Therefore, $y(z)$ is a solution of (\ref{su3-3-ode}) if and only if
\begin{align}\label{rec-3-app}
\phi_j c_j=&-[D_k-(j+\rho_{k,1}-1)B_k]c_{j-1}+\Big[(j+\rho_{k,1}-2)B-A_k\\
&+(j+\rho_{k,1}-2)\sum_{l\neq k}(\alpha_l\wp_{kl}^{(0)}+B_l\zeta_{kl})\Big]c_{j-2}+\sum_{i\geq 3}R_{i}c_{j-i},\;\forall j\geq 0,\nonumber
\end{align}
where $R_i\in \mathbb{Q}[g_2,g_3,\zeta_{kl},\wp^{(n)}_{kl}][\vec{\mathbf B}]$ is of homogeneous weight $i$ and its expression can be determined by (\ref{eq-appapp}).

Note that $\phi_j=0$ if and only if $j\in\{0, n_{1,k}+1, n_{1,k}+n_{2,k}+2\}$, and (\ref{rec-3-app}) with $j=0$ holds automatically.
By (\ref{rec-3-app}) and the induction argument, for $1\leq j\leq n_{1,k}$, $c_j$ can be uniquely solved as \[\frac{c_j}{c_0}=c_j(\vec{\mathbf B})=\tilde{r}_j D_k^j+\cdots\in \mathbb{Q}[g_2,g_3,\zeta_{kl},\wp^{(n)}_{kl}][\vec{\mathbf B}]\] with degree $j$ in $\vec{\mathbf B}$ and homogenous weight $j$, where $\tilde{r}_j\in\mathbb{Q}\setminus\{0\}$.

Consequently, there is a polynomial $P_{k,1}(\vec{\mathbf B})$ as stated in this lemma such that the RHS of (\ref{rec-3-app}) with $j=n_{1,k}+1$ equals $c_0P_{k,1}(\vec{\mathbf B})$.
Since $c_0$ can be arbitrary, we obtain $P_{k,1}(\vec{\mathbf B})=0$.

Again by (\ref{rec-3-app}) and the induction argument, for $n_{1,k}+2\leq j\leq n_{1,k}+n_{2,k}+1$, $c_j$ can be uniquely solved as
\[c_j=c_{n_{1,k}+1}c_{j,1}(\vec{\mathbf B})+c_{0}c_{j,2}(\vec{\mathbf B}),\]
where \[c_{j,1}(\vec{\mathbf B})=r_{j,1}D_k^{j-n_{1,k}-1}+\cdots\in \mathbb{Q}[g_2,g_3,\zeta_{kl},\wp^{(n)}_{kl}][\vec{\mathbf B}]\]
is of homogenous weight $j-n_{1,k}-1$, and
\[c_{j,2}(\vec{\mathbf B})=r_{j,2}A_kD_k^{j-2}+\cdots\in \mathbb{Q}[g_2,g_3,\zeta_{kl},\wp^{(n)}_{kl}][\vec{\mathbf B}]\]
is of homogenous weight $j$, and $r_{j,1},r_{j,2}\in\mathbb{Q}\setminus\{0\}$.

Inserting these into (\ref{rec-3-app}) with $j=n_{1,k}+n_{2,k}+2$,
there are two polynomials $P_{k,s}(\vec{\mathbf B})$, $s=2,3$ as stated in this lemma such that
the RHS of (\ref{rec-3-app}) with $j=n_{1,k}+n_{2,k}+2$ equals \[c_{n_{1,k}+1}P_{k,2}(\vec{\mathbf B})+c_{0}P_{k,3}(\vec{\mathbf B}).\]
Since $c_0$ and $c_{n_{1,k}+1}$ can be arbitrary, we obtain $P_{k,s}(\vec{\mathbf B})=0$ for $s=2,3$.

Conversely, if $P_{k,s}(\vec{\mathbf B})=0$ for $s=1,2,3$, then the standard Frobenius theory shows that  (\ref{su3-3-ode}) has local solutions of the form (\ref{su3-3-fc41}) with $c_0, c_{n_{1,k}+1}$, $c_{n_{1,k}+n_{2,k}+2}$ being arbitrary, so $p_k$ is apparent. This completes the proof.
\end{proof}

As a consequence of Theorem \ref{thm3-0} and Lemma \ref{lemma-su3-33}, we obtain

\begin{corollary}\label{Coro-1}
$p_k$ are apparent singularities of ODE (\ref{su3-3-ode}) with (\ref{fc-residue}) for all $0\leq k\leq m$ if and only if $\vec{\mathbf B}\in \mathbb{C}^{3m+5}$ is a solution of the following $3m+5$ polynomials system
\begin{equation}\label{fc-fc11}
\begin{cases}
P_1(\vec{\mathbf B}):=\sum_{k=0}^m B_k=0,\\
P_2(\vec{\mathbf B}):=\sum_{k=0}^m A_k=0,\\
P_{k,1}(\vec{\mathbf B})=P_{k,2}(\vec{\mathbf B})=P_{k,3}(\vec{\mathbf B})=0,\quad 0\leq k\leq m.
\end{cases}
\end{equation}

Consequently,
the number of solutions of the Toda system (\ref{Toda-more-general})
equals to the number of solutions $\vec{\mathbf B}$'s of the polynomial system (\ref{fc-fc11}).
\end{corollary}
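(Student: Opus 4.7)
The plan is to assemble this corollary directly from the two structural results already established. First I would observe that the residue condition (\ref{fc-residue}) is literally the pair of linear equations $P_1(\vec{\mathbf B}) = \sum_{k=0}^m B_k = 0$ and $P_2(\vec{\mathbf B}) = \sum_{k=0}^m A_k = 0$ appearing in system (\ref{fc-fc11}). Next, invoking Lemma \ref{lemma-su3-33} for each $k \in \{0, 1, \ldots, m\}$, the point $p_k$ is an apparent singularity of ODE (\ref{su3-3-ode}) if and only if the three polynomial equations $P_{k,1}(\vec{\mathbf B}) = P_{k,2}(\vec{\mathbf B}) = P_{k,3}(\vec{\mathbf B}) = 0$ hold. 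Combining these statements over all $k$ assembles a total of $2 + 3(m+1) = 3m+5$ polynomial equations in the $3m+5$ unknowns of $\vec{\mathbf B}$, which is precisely the system (\ref{fc-fc11}); this proves the first assertion.

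For the counting statement, I would appeal to Theorem \ref{thm3-0}, which provides a one-to-one correspondence between solutions of the Toda system (\ref{Toda-more-general}) and those parameter vectors $\vec{\mathbf B} \in \mathbb{C}^{3m+5}$ for which the residue condition holds and every $p_k$ is an apparent singularity of ODE (\ref{su3-3-ode}). By the equivalence just established, the set of such $\vec{\mathbf B}$'s coincides with the common zero locus of the polynomial system (\ref{fc-fc11}), and hence the two cardinalities agree.

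There is essentially no obstacle here: the corollary is a bookkeeping step whose real purpose is to recast the analytic problem of counting Toda solutions as the algebraic problem of counting common zeros of an explicit polynomial system with prescribed weighted degrees. The substantive work has already been absorbed into Lemma \ref{lemma-su3-33} (the local Frobenius analysis at each $p_k$, which produces the polynomials $P_{k,i}$ with leading terms and weights recorded there) and Theorem \ref{thm3-0} (the integrability-based passage from apparent-singularity data back to a genuine solution of the Toda system). What this corollary accomplishes is the set-up for the algebraic-geometric counting argument, to be carried out via the B\'ezout theorem applied to the weighted homogeneous system (\ref{fc-fc11}) using the weighted degrees $n_{1,k}+1$, $n_{2,k}+1$, $n_{1,k}+n_{2,k}+2$ of $P_{k,1}, P_{k,2}, P_{k,3}$ and the weights $1$, $2$ of $P_1, P_2$, which will ultimately yield the upper bound $N(\{n_{1,k}\}_k, \{n_{2,k}\}_k)$ in Theorem \ref{thm6}.
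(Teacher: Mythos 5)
Your proposal is correct and is exactly the argument the paper intends: the residue conditions (\ref{fc-residue}) supply $P_1=P_2=0$, Lemma \ref{lemma-su3-33} applied at each $p_k$ supplies the remaining $3(m+1)$ equations, and Theorem \ref{thm3-0} converts the resulting equivalence into the counting statement. The paper itself presents the corollary without further proof as an immediate consequence of these two results, so your write-up matches its approach.
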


Now we are in the position to prove Theorem \ref{thm6}.

\begin{proof}[Proof of Theorem \ref{thm6}]
The key point is to show the existence of a constant $C>1$ such that for any solution $\vec{\mathbf B}$ of (\ref{fc-fc11}), there holds
\begin{equation}\label{su3-fc46}\sum_{k=0}^m|B_k|+\sum_{k=0}^m|A_k|+\sum_{k=0}^m|D_k|+|B|+|D|\leq C.\end{equation}

This assertion seems difficult to prove from the viewpoint of the polynomials. Here we use its connection with the Toda system.
By Theorem \ref{thm-4.1}, we know that for any solution $\vec{\mathbf B}$ of (\ref{fc-fc11}), there is a solution $(U, V)$ of the Toda system (\ref{TodaUV}) corresponding to this $\vec{\mathbf B}$ via (\ref{su3-eqeq}) and (\ref{su3-eqeqeq}).

Let $z_0\in E_{\tau}\setminus\{p_k\}_{k=0}^m$ and fix small $\epsilon>0$ such that the neighborhood $B(z_0, \epsilon)\Subset E_{\tau}\setminus\{p_k\}_{k=0}^m$. Then by Theorem A, there is a constant $C_1$ such that for any solution $(U,V)$ of the Toda system (\ref{TodaUV}), we have
\[|U(z)|+|V(z)|\leq C_1,\quad\forall z\in B(z_0, \epsilon).\]
Then by applying standard gradient estimates to the system (\ref{TodaUV}), there is a constant $C_2$ such that for any solution $(U,V)$ of (\ref{TodaUV}), we have
\[|U_z|+|V_z|+|U_{zz}|+|V_{zz}|+|U_{zzz}|\leq C_2,\quad\forall z\in B(z_0, \epsilon/2).\]
From here, (\ref{su3-eqeq}) and (\ref{su3-eqeqeq}), we see that there is a constant $C_3$ such that for any solution $\vec{\mathbf B}$ of (\ref{fc-fc11}),
\begin{equation}\label{fc-fc12}\left|\sum_{k=0}^mB_k
\zeta(z-p_k)+B\right|\leq C_3,\quad\forall z\in B(z_0, \epsilon/2),\end{equation}
\[\left|\sum_{k=0}^mD_k\wp(z-p_k)+\sum_{k=0}^mA_k
\zeta(z-p_k)+D\right|\leq C_3,\quad\forall z\in B(z_0, \epsilon/2).\]
Here we used the fact that both $\sum_{k=0}^m \alpha_k\wp(z-p_k)$ and $\sum_{k=0}^m \beta_k\wp'(z-p_k)$ are uniformly bounded for $z\in B(z_0, \epsilon/2)$ because $B(z_0, \epsilon)\Subset E_{\tau}\setminus\{p_k\}_{k=0}^m$.

By taking $m+2$ distinct points $z_j\in B(z_0, \epsilon/2)$, $1\leq j\leq m+2$, such that
the $(m+2)\times(m+2)$ matrix
\[\begin{pmatrix}\zeta(z_1-p_0)&\cdots&\zeta(z_1-p_m)&1\\
\vdots&\vdots&\vdots&\vdots\\
\zeta(z_{m+2}-p_0)&\cdots&\zeta(z_{m+2}-p_m)&1
\end{pmatrix}\]
is invertible, we conclude from (\ref{fc-fc12}) that $\sum_{k=0}^m |B_k|+|B|$ is uniformly bounded.
The similar argument also yields the uniform estimates for $\sum_{k=0}^m|A_k|+\sum_{k=0}^m|D_k|+|D|$. This proves (\ref{su3-fc46}).

Now we let \begin{equation}\label{fc-fc13} B=\tilde{B}^2,\quad D=\tilde{D}^3,\quad A_k=\tilde{A}_k^2,\quad\forall 0\leq k\leq m,\end{equation}
and define
\[\vec{\mathcal B}:=(\tilde{A}_0,\cdots,\tilde{A}_m,B_0,\cdots,B_m,\tilde{B},D_0,\cdots,D_m,\tilde{D}),\]
\[\tilde{P}_j(\vec{\mathcal{B}}):=P_j(\vec{\mathbf B}),\quad j=1,2,\]
\[\tilde{P}_{k,j}(\vec{\mathcal{B}}):=P_{k,j}(\vec{\mathbf B}),\quad 0\leq k\leq m,\; j=1,2,3.\]
Then it follows from Lemma \ref{lemma-su3-33} and Corollary \ref{Coro-1} that as polynomials of $\vec{\mathcal{B}}$, there hold
\[\deg \tilde{P}_{1}=1,\quad \deg \tilde{P}_2=2,\]
\[\deg \tilde{P}_{k,1}=n_{1,k}+1,\; \deg \tilde{P}_{k,2}=n_{2,k}+1,\; \deg \tilde{P}_{k,3}=n_{1,k}+n_{2,k}+2.\]
Furthermore, (\ref{su3-fc46}) implies that
\begin{equation}\label{su3-fc46-1}\sum_{k=0}^m|B_k|+\sum_{k=0}^m|\tilde{A}_k|+\sum_{k=0}^m|D_k|+
|\tilde{B}|+|\tilde{D}|\leq C\end{equation}
holds for any solution $\vec{\mathcal{B}}$ of the new polynomial system
\begin{equation}\label{fc-fc111}
\begin{cases}
\tilde{P}_1(\vec{\mathcal B})=\sum_{k=0}^m B_k=0,\\
\tilde{P}_2(\vec{\mathcal B}):=\sum_{k=0}^m \tilde{A}_k^2=0,\\
\tilde{P}_{k,1}(\vec{\mathcal B})=\tilde{P}_{k,2}(\vec{\mathcal B})=\tilde{P}_{k,3}(\vec{\mathcal B})=0,\quad 0\leq k\leq m.
\end{cases}
\end{equation}

Denote $X:=\{ \vec{\mathcal B}\in \mathbb{C}^{3m+5} | \text{$\vec{\mathcal{B}}$ is a solution of (\ref{fc-fc111})}\}$, then $X$ is an affine variety. Since an affine variety over $\mathbb{C}$ can not be bounded in the standard topology except it is a finite set (see e.g. \cite[p.35]{AG}), we conclude from (\ref{su3-fc46-1}) that $X$ consists of finite points, namely
the polynomial system (\ref{fc-fc111}) has only finitely many solutions.
Then it follows from the B\'{e}zout theorem in algebraic geometry (see e.g. \cite[p.246]{IRS}) that the polynomial system (\ref{fc-fc111}) has at most \begin{align*}&\deg \tilde{P}_1\times\deg \tilde{P}_2\times\prod_{k=0}^m\prod_{j=1}^3\deg \tilde{P}_{k,j}\\
=&2\prod_{k=0}^m(n_{1,k}+1)(n_{2,k}+1)(n_{1,k}+n_{2,k}+2)\\
=&3\times 2^{m+2}N(\{n_{1,k}\}_{k},\{n_{2,k}\}_{k})
\end{align*}
solutions by counting multiplicities. Thus we conclude from (\ref{fc-fc13}) that the polynomial system (\ref{fc-fc11}) has at most $N(\{n_{1,k}\}_{k},\{n_{2,k}\}_{k})$ solutions and so does the Toda system (\ref{Toda-more-general}).
\end{proof}

\subsection{Proofs of Theorem \ref{thm4}-\ref{thm5}}

In this section, we study the special Toda system (\ref{Toda}) and the associated ODE (\ref{fc-3ode}) via Theorem \ref{thm3}, and prove Theorem \ref{thm4}-\ref{thm5}. In this case, since $m=0$, the polynomial system (\ref{fc-fc11}) with $5$ polynomials reduces to a polynomial system with $3$ polynomials of $(B,D_0,D)$ because $B_0=A_0=0$. We summarize this fact as follows.

\begin{lemma}\label{lemma-su3-3} There are three polynomials \[P_{0,1}(B,D_0,D)=r_1D_0^{n_1+1}+\cdots\in\mathbb{Q}[g_2,g_3][B,D_0,D],\;r_1\in\mathbb{Q}\setminus\{0\},\] \[P_{0,2}(B,D_0,D)=r_2D_0^{n_2+1}+\cdots\in\mathbb{Q}[g_2,g_3][B,D_0,D],\;r_2\in\mathbb{Q}\setminus\{0\},\]
\[P_{0,3}(B,D_0,D)=r_3BD_0^{n_1+n_2}+\cdots\in\mathbb{Q}[g_2,g_3][B,D_0,D],\;r_3\in\mathbb{Q}\setminus\{0\},\]
with homogeneous weights
\[
\operatorname{Weig} P_{0,1}=n_1+1,\quad \operatorname{Weig} P_{0,2}=n_2+1,\quad \operatorname{Weig} P_{0,3}=n_1+n_2+2,
\]
such that $0$ is an apparent singularity of ODE (\ref{fc-3ode})
if and only if
\begin{equation}\label{su3-fc42}P_{0,1}(B,D_0,D)=P_{0,2}(B,D_0,D)=P_{0,3}(B,D_0,D)=0.\end{equation}

Consequently, the number of solutions of the Toda system (\ref{Toda})
equals to the number of solutions $(B,D_0,D)$'s of the polynomial system (\ref{su3-fc42}),
and the number of even solutions equals to the number of solutions of the form $(B,0,0)$.
\end{lemma}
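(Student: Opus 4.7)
The plan is to obtain Lemma \ref{lemma-su3-3} as a specialization of Lemma \ref{lemma-su3-33} to the case $m=0$, $p_0 = 0$, and then to read off the counting statements from Corollary \ref{Coro-1} and Theorem \ref{thm3}.

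First I would note that when $m=0$, the residue conditions (\ref{fc-residue}) force $A_0 = 0$ and $B_0 = 0$, so the parameter vector $\vec{\mathbf B}$ collapses to the triple $(B, D_0, D)$ and ODE (\ref{su3-3-ode}) reduces to exactly ODE (\ref{fc-3ode}). Moreover, in the Frobenius recursion (\ref{rec-3-app}) the sums $\sum_{l\neq k}$ are empty, so the coefficient of every $c_{j-i}$ in the recursion lies in $\mathbb{Q}[g_2,g_3][B,D_0,D]$ rather than in the larger ring $\mathbb{Q}[g_2,g_3,\zeta_{kl},\wp_{kl}^{(n)}][\vec{\mathbf B}]$. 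Lemma \ref{lemma-su3-33} then immediately supplies three polynomials $P_{0,1}, P_{0,2}, P_{0,3}$ in $\mathbb{Q}[g_2,g_3][B,D_0,D]$ of the stated homogeneous weights, and $0$ is apparent iff all three vanish.

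Next I would verify the leading terms. For $P_{0,1}$ and $P_{0,2}$, the leading monomials $r_1 D_0^{n_1+1}$ and $r_2 D_0^{n_2+1}$ are unchanged from Lemma \ref{lemma-su3-33}: in the general proof they come from the pure $-D_0\, c_{j-1}$ chain in the recursion, which involves neither $A_0$ nor $B_0$ and so survives setting $A_0=B_0=0$. The subtlety is with $P_{0,3}$: its previous leading term $r_3 A_0 D_0^{n_1+n_2}$ vanishes identically. I would then retrace the coefficient of $c_0$ on the RHS of (\ref{rec-3-app}) at $j=n_1+n_2+2$ (with the free parameter $c_{n_1+1}$ set to $0$) using only the surviving steps $D_0$ (weight $1$, shift $1$), $B$ (weight $2$, shift $2$), $D$ (weight $3$, shift $3$), together with the higher-order $b_i$ terms (weight $i$). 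Because $\phi_{n_1+1}=0$ blocks the pure $D_0$-chain from propagating through index $n_1+1$, no monomial $D_0^{n_1+n_2+2}$ can appear. The surviving monomial of highest $D_0$-degree and total weight $n_1+n_2+2$ is $B D_0^{n_1+n_2}$, realized by using the $B\,c_{j-2}$-step exactly once to bypass the barrier and the $D_0\,c_{j-1}$-step $n_1+n_2$ times; a product of $\phi_j^{-1}$ factors computes its coefficient $r_3\in\mathbb{Q}\setminus\{0\}$.

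Finally, the counting claims follow by direct quotation. Corollary \ref{Coro-1} specialized to $m=0$ states that the number of solutions of the Toda system (\ref{Toda}) equals the number of triples $(B,D_0,D)$ solving the three equations (\ref{su3-fc42}); and Theorem \ref{thm3} matches the even solutions of (\ref{Toda}) bijectively with the triples of the restricted form $(B,0,0)$ that still satisfy (\ref{su3-fc42}). The main obstacle I expect is purely algebraic bookkeeping: verifying that the combinatorial coefficient $r_3$ of $B D_0^{n_1+n_2}$ in $P_{0,3}$ does not vanish, i.e.\ that no cancellation among the finitely many paths through $(D_0,B)$-steps with one use of $B$ collapses the leading coefficient. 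This is a routine but careful calculation with the explicit $\phi_j$'s; once it is done, the rest of the proof is direct specialization.
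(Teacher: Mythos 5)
Your proposal is correct and follows essentially the same route as the paper: the paper's proof likewise obtains $P_{0,1},P_{0,2},P_{0,3}$ by specializing Lemma \ref{lemma-su3-33} to $m=0$ with $A_0=B_0=0$ and only re-examines the leading term of $P_{0,3}$ via the simplified recursion (\ref{rec-app}), then quotes Corollary \ref{Coro-1} and Theorem \ref{thm3} for the counting claims. The only point you leave as an ``obstacle'' is settled in the paper by the one-line observation that $j+\rho_1-2\neq 0$ for all $j\geq 0$ (since $\rho_1\notin\mathbb{Z}$); this is precisely the coefficient of the single $B$-step that bypasses the barrier $\phi_{n_1+1}=0$, and since that is the unique path producing $BD_0^{n_1+n_2}$ there is in fact no cancellation among paths to rule out.
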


\begin{proof} Comparing to Lemma \ref{lemma-su3-33}, we only need to explain the term $BD_0^{n_1+n_2}$ in the expression of $P_{0,3}$ (because $A_0=0$ makes the term $A_0D_0^{n_1+n_2}=0$ in the original expression of $P_{0,3}$ stated in Lemma \ref{lemma-su3-33}).

Recalling the local exponents $\rho_j$ in (\ref{localex0-1}), for ODE (\ref{fc-3ode}) the formula (\ref{eq-appapp}) becomes simpler and reads as
\begin{align}\label{eq-aappapp}&\sum_{j=0}^{\infty}\Big(\phi_jc_j+D_0c_{j-1}-(j+\rho_1-2)Bc_{j-2}+Dc_{j-3}\\
&-\sum_{i=4}^j[(j+\rho_1-i)\alpha-(i-2)\beta]b_ic_{j-i}+D_0\sum_{i=4}^{j-1}b_ic_{j-1-i}\Big)
z^{j+\rho_1-3}=0,\nonumber\end{align}
with
\begin{align*}
\phi_j=j(j-n_1-1)(j-n_1-n_2-2).
\end{align*}
so the recursive formula (\ref{rec-3-app}) becomes
\begin{align}\label{rec-app}
\phi_j c_j=&-D_0c_{j-1}+(j+\rho_1-2)Bc_{j-2}-Dc_{j-3}\\&
+\sum_{i=4}^j[(j+\rho_1-i)\alpha-(i-2)\beta]b_ic_{j-i}-D_0\sum_{i=4}^{j-1}b_ic_{j-1-i},\;\forall j\geq 0.\nonumber
\end{align}
Now we have $\rho_1\notin\mathbb{Z}$, so $j+\rho_1-2\neq 0$ for all $j\geq 0$. Then the induction argument as in Lemma \ref{lemma-su3-33} implies $P_{0,3}(B,D_0,D)=r_3BD_0^{n_1+n_2}+\cdots$ with $r_3\neq 0$.
\end{proof}

\begin{remark}\label{rmkk}
Conjecture \ref{conj-1} is equivalent to assert that the number of solutions of the polynomial system (\ref{su3-fc42}) is exactly
\[N(n_1,n_2)=\frac{(n_1+1)(n_2+1)(n_1+n_2+2)}{6}\]
except for finitely many $\tau$'s modulo $SL(2,\mathbb{Z})$. Counting the number of solutions of polynomial equations is not easy in general.
Here we have some further discussions about Conjecture \ref{conj-1}.

Recalling Lemma \ref{lemma-su3-3} that $P_{0,j}(B,D_0,D)\in\mathbb{Q}[g_2,g_3][B,D_0,D]$, we define
\[\hat{P}_{0,j}(B,D_0,D):=P_{0,j}(B,D_0,D)\Big|_{g_2=g_3=0},\quad j=1,2,3,\]
and consider the corresponding system
\begin{equation}\label{su3-fcc42}\hat{P}_{0,1}(B,D_0,D)
=\hat{P}_{0,2}(B,D_0,D)=\hat{P}_{0,3}(B,D_0,D)=0.\end{equation}
Since letting $g_2=g_3=0$ is equivalent to replacing $\wp(z)$ with $\frac{1}{z^2}$ in ODE (\ref{fc-3ode}), we consider the corresponding ODE
\begin{equation}\label{fc-3o1de}
y'''-(\tfrac{\alpha}{z^2}+B)y'+(\tfrac{-2\beta}{z^3}+\tfrac{D_0}{z^2}+D)y=0.
\end{equation}
Then $0$ is an apparent singularity of (\ref{fc-3o1de}) if and only if (\ref{su3-fcc42}) holds.
\begin{conjecture}\label{conj-2} $0$ is an apparent singularity of (\ref{fc-3o1de}) if and only if $B=D_0=D=0$, or equivalently
 the polynomial system (\ref{su3-fcc42}) has only trivial solution $B=D_0=D=0$.
 \end{conjecture}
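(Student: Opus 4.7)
The ``if'' direction is immediate. When $B=D_0=D=0$, the ODE (\ref{fc-3o1de}) reduces to the Euler equation $y'''-\alpha z^{-2}y'-2\beta z^{-3}y=0$, whose three linearly independent solutions are the monomials $z^{\rho_j}$ with $\rho_j$ the local exponents in (\ref{localex0-1}). None involves a logarithm, so $0$ is apparent.

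For the converse, the plan is to exploit a $\mathbb{C}^*$-symmetry that is special to the $g_2=g_3=0$ case. A direct substitution shows that $z\mapsto\lambda z$ transforms (\ref{fc-3o1de}) into the same ODE with parameters $(\lambda^{2}B,\lambda D_0,\lambda^{3}D)$, so the apparentness of $0$ is preserved by the weighted scaling with weights $(2,1,3)$ on $(B,D_0,D)$. Equivalently, by Lemma \ref{lemma-su3-3} the polynomials $\hat P_{0,1},\hat P_{0,2},\hat P_{0,3}$ are weighted-homogeneous of degrees $n_1+1$, $n_2+1$, $n_1+n_2+2$ in these weights. The common zero locus $\hat V\subset\mathbb{C}^3$ is therefore a closed $\mathbb{C}^*$-invariant subvariety whose only possible fixed point is the origin; so either $\hat V=\{0\}$ (the desired conclusion) or $\hat V$ contains a nontrivial orbit and is at least one-dimensional. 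It thus suffices to show that $\hat V$ is zero-dimensional.

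I would try to establish zero-dimensionality by one of two complementary routes. (a) \emph{Degeneration.} Corollary \ref{Coro-1} combined with Theorem \ref{thm6} shows that for generic nonzero $(g_2,g_3)$ the affine variety $V(P_{0,1},P_{0,2},P_{0,3})$ has at most $N(n_1,n_2)$ points; if one can verify that this family of ideals over $\operatorname{Spec}\mathbb{Q}[g_2,g_3]$ is flat near the origin, then upper semi-continuity of fibre length transfers zero-dimensionality to $g_2=g_3=0$, and the $\mathbb{C}^*$-invariance of $\hat V$ forces $\hat V=\{0\}$. (b) \emph{Triangular elimination.} Using the recursion (\ref{rec-app}) with $g_2=g_3=0$, treat $\hat P_{0,1}=\hat P_{0,2}=0$ as a system determining $(B,D)$ on finitely many branches, each an algebraic family in $D_0$; substituting into $\hat P_{0,3}$ would then yield a nonzero weighted-homogeneous polynomial in $D_0$ alone on each branch, forcing $D_0=0$. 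Small cases are encouraging: for $(n_1,n_2)=(1,2)$ one finds $B=3D_0^2/16$, $D=3D_0^3/64$, and $\hat P_{0,3}=(5/2048)D_0^5$ along this curve.

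The main obstacle is making either route work in general. Theorem A's apriori estimates, which underlie Theorem \ref{thm6}, rely on the compactness of $E_\tau$, which itself degenerates as $(g_2,g_3)\to(0,0)$, so a direct limit argument is not available; one instead needs an algebraic flatness statement for the ideal $(\hat P_{0,1},\hat P_{0,2},\hat P_{0,3})$, or equivalently a Gr\"obner-theoretic control of its initial ideal. The triangular-elimination route is explicit but the number of branches of $(B,D)$ as a function of $D_0$ grows with $n_1,n_2$ (since $\hat P_{0,1}$ has $B$-degree $\lfloor(n_1+1)/2\rfloor$), so inductively controlling the leading monomial of $\hat P_{0,3}$ along each branch is what I expect to be the main technical hurdle.
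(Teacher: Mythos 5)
The statement you are addressing is labeled a Conjecture in the paper, and the authors explicitly leave it open for general $(n_1,n_2)$, verifying it only for small cases by direct computation; so there is no proof in the paper to compare against. What you have written is likewise not a proof. Your ``if'' direction is correct and complete: with $B=D_0=D=0$ the equation (\ref{fc-3o1de}) is an Euler equation whose indicial roots are the three distinct exponents $\rho_1,\rho_2,\rho_3$ of (\ref{localex0-1}), and distinct indicial roots of an Euler equation give pure power solutions with no logarithms, so $0$ is apparent. Your scaling observation is also correct and is essentially the content of the paper's Remark \ref{rmkk}: the $\hat P_{0,j}$ are the top weighted-homogeneous parts of the $P_{0,j}$ for the weights $(2,1,3)$ on $(B,D_0,D)$, so $\hat V$ is a cone and the conjecture is equivalent to the absence of solutions at infinity for the (suitably homogenized) system (\ref{su3-fc42}). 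But this only reduces the conjecture to showing $\dim\hat V=0$, which is exactly the open problem.

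Neither of your two routes closes this gap, and route (a) is circular as stated. The a priori estimates of Theorem A, hence the finiteness in Theorem \ref{thm6}, apply only when $(g_2,g_3)$ comes from an actual torus, i.e.\ off the discriminant locus $g_2^3-27g_3^2=0$; the point $(0,0)$ lies on that locus. Consider the total variety $V(P_{0,1},P_{0,2},P_{0,3})\subset\mathbb{C}^5_{(g_2,g_3,B,D_0,D)}$: every irreducible component has dimension at least $2$, and the PDE input only tells you that no component has positive-dimensional fibre over the complement of the discriminant. A component could still map \emph{into} the discriminant curve and produce a positive-dimensional fibre over $(0,0)$; ruling this out is precisely the flatness (equivalently, zero-dimensionality of the special fibre) you would need, so semicontinuity cannot be invoked without first proving the conclusion. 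Route (b) is the honest computational approach, but you verify it only for $(n_1,n_2)=(1,2)$; for general $(n_1,n_2)$ the polynomial $\hat P_{0,1}$ has $B$-degree $\lfloor(n_1+1)/2\rfloor$, the branches of $(B,D)$ over $D_0$ multiply, and you give no mechanism for showing that $\hat P_{0,3}$ restricts to a nonzero multiple of a power of $D_0$ on every branch. So the converse direction remains unproved, in your proposal as in the paper.
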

Conjecture \ref{conj-2} can be proved for small values of $(n_1,n_2)$ via direct computations. It remains open for general $(n_1,n_2)$ satisfying $n_1\not\equiv n_2\mod 3$.

Now we turn back to the proof of Theorem \ref{thm6} in Section 4.1:
Let $B=\tilde{B}^2$, $D=\tilde{D}^3$ and define
\[\tilde{P}_{0,j}(\tilde{B},D_0,\tilde{D}):=P_{0,j}(B,D_0,D),\quad j=1,2,3.\]
Suppose Conjecture \ref{conj-2} holds. Then the associated homogenized system of
\begin{equation}\label{su3-fc42-1}\tilde{P}_{0,1}(\tilde{B},D_0,\tilde{D})
=\tilde{P}_{0,2}(\tilde{B},D_0,\tilde{D})=\tilde{P}_{0,3}(\tilde{B},D_0,\tilde{D})=0\end{equation}
has \emph{no} solutions at infinity in $\mathbb{CP}^3$, so it follows from the B\'{e}zout theorem (see e.g. \cite[p.246]{IRS}) that the polynomial system (\ref{su3-fc42-1}) has \emph{exactly}
$\prod_{j=1}^3\deg \tilde{P}_{0,j}=6N(n_1,n_2)$ solutions by \emph{counting multiplicities}.

Therefore, Conjecture \ref{conj-1} is equivalent to assert that Conjecture \ref{conj-2} holds and \emph{the multiplicity of any solution of (\ref{su3-fc42-1}) is $1$ except for finitely many $\tau$'s modulo $SL(2,\mathbb{Z})$}, which we strongly believe to be true.

For small values of $(n_1, n_2)$, Conjecture \ref{conj-1} can be proved by direct computations as done below for $n_1=0$ and $n_2=1,2,4$.
\end{remark}

\begin{proof}[Proof of Theorem \ref{thm4}]
(1) Let $(n_1,n_2)=(0,1)$. Then $\rho_1=-\frac{1}{3}$ and a direct computation via the recursive formula (\ref{rec-app}) gives
\[P_{0,1}=-D_0,\quad P_{0,2}=-\frac{D_0^2}{2}+\frac{2B}{3},\quad P_{0,3}=-D-\frac{BD_0}{6}.\]
So (\ref{su3-fc42}) has a unique solution $(0,0,0)$, namely the Toda system (\ref{Toda}) has a unique solution which is even.

(2). Let $(n_1,n_2)=(0,2)$. Then $\rho_1=-\frac{2}{3}$ and so
\[P_{0,1}=-D_0,\quad P_{0,2}=-\frac{D_0^3}{24}+\frac{7BD_0}{18}-D,\]
\[P_{0,3}=-\frac{BD_0^2}{36}-\frac{D_0D}{6}+\frac{2B^2}{9}-\frac{2g_2}{27}.\]
So (\ref{su3-fc42}) has solutions $D_0=D=0$ and $B=\pm\sqrt{g_2/3}$.

Consequently, when $g_2=0$ (i.e. $\tau\equiv e^{\pi i/3}$ mod $SL(2,\mathbb{Z})$), the Toda system (\ref{Toda}) has a unique solution which is even; when $g_2\neq 0$ (i.e. $\tau\not\equiv e^{\pi i/3}$ mod $SL(2,\mathbb{Z})$), the Toda system (\ref{Toda}) has exactly $2$ solutions that are both even.
\end{proof}

\begin{proof}[Proof of Theorem \ref{thm5}]
Let $(n_1,n_2)=(0,4)$. Then $\rho_1=-\frac{4}{3}$. Again $P_{0,1}=-D_0$ while the expressions of $P_{0,2}, P_{0,3}$ are slightly complicated. Instead we may insert $D_0=0$ in the recursive formula (\ref{rec-app}) and obtain
\[P_{0,2}(B,0,D)=\frac{5}{54}BD,\;\, P_{0,3}(B,0,D)=\frac{-1}{486}(6B^3+27D^2-56g_2B+288g_3).\]
It follows from $P_{0,2}(B,0,D)=0$ that $B=0$ or $D=0$.

If $D=0$, we have $6B^3-56g_2B+288g_3=0$, the discriminant of which is $\Delta(\tau):=343g_2^3-6561g_3^2$ up to a nonzero constant. Since this $\Delta(\tau)$ is a modular form of weight $12$ with respect to $SL(2,\mathbb{Z})$, it is standard by the theory of modular forms that it has a unique zero $\tau_0$ modulo $SL(2,\mathbb{Z})$ action. Therefore, $6B^3-56g_2B+288g_3=0$ has $3$ (resp. $2$) distinct roots for $\tau\not\equiv \tau_0$ modulo $SL(2,\mathbb{Z})$ (resp. $\tau\equiv \tau_0$ modulo $SL(2,\mathbb{Z})$), namely the Toda system has exactly $3$ (resp. $2$) even solutions for $\tau\not\equiv \tau_0$ modulo $SL(2,\mathbb{Z})$ (resp. $\tau\equiv \tau_0$ modulo $SL(2,\mathbb{Z})$).

If $D\neq 0$ then $B=0$ and $27D^2+288g_3=0$, so the Toda system has exactly $2$ solutions which are not even as long as $g_3\neq 0$. Noting that $g_3(\tau_0)\neq 0$, we obtain the desired statements of Theorem \ref{thm5}.
\end{proof}

\section{Proof of Theorem \ref{thm1}}

This section is devoted to the proof of Theorem \ref{thm1}.
Consider the special case $m=3$ and $p_k=\frac{\omega_k}{2}$, i.e. the Toda system (\ref{Toda-general}) or equivalently
\begin{equation}\label{TodaUV-e}
\begin{cases}
\Delta U+e^{2U-V}=4\pi \sum_{k=0}^3\gamma_{1,k}\delta_{\frac{\omega_k}{2}}\quad\text{ on
}\; E_{\tau},\\
\Delta V+e^{2V-U}=4\pi \sum_{k=0}^3\gamma_{2,k}\delta_{\frac{\omega_k}{2}}\quad\text{ on
}\; E_{\tau}.
\end{cases}
\end{equation}
If the solution $(U,V)$ of the Toda system (\ref{TodaUV-e}) is even, then $W_2$ is even and $W_3$ is odd, so the associated ODE (\ref{su3-ode}) takes a simpler form (Note $A_0=-A_1-A_2-A_3$)
\begin{align}\label{su3-ode-even}
\mathcal{L}y=y'''&-\bigg(\sum_{k=0}^3\alpha_k\wp(z-\tfrac{\omega_k}{2})+B\bigg)y'\\
&+\bigg(\sum_{k=0}^3\beta_k\wp'(z-\tfrac{\omega_k}{2})+\sum_{k=1}^3A_k
\frac{\wp'(z)}{2(\wp(z)-e_k)}\bigg)y=0,\nonumber
\end{align}
where $e_k=\wp(\frac{\omega_k}{2})$ and we used the formula
\[\zeta(z-\tfrac{\omega_k}{2})-\zeta(z)+\zeta(\tfrac{\omega_k}{2})=\frac{\wp'(z)}{2(\wp(z)-e_k)}.\]

\begin{proof}[Proof of Theorem \ref{thm1}]
By changing variable $z\to z+\frac{\omega_k}{2}$ if necessary, we may assume that $n_{1,0}$ and $n_{2,0}$ are both odd.

Assume by contradiction that the Toda system (\ref{TodaUV-e}) has an \emph{even} solution $(U, V)$. Then it follows from Lemma \ref{thm-A} that $\frac{\omega_k}{2}$'s are apparent singularities of the associated ODE (\ref{su3-ode-even}) for all $k$, and there is a basis of local solutions $\vec{Y}=(y_1, y_2, y_3)^T$ in $B(q_0, |q_0|/2)$ such that
the monodromy matrices $N_1, N_2$ with respect to $\vec{Y}$ are given by (\ref{su3-fc4}).

{\bf Step 1.} Recalling the local exponents $\rho_{0,j}$ in (\ref{localex0}), since $0$ is an apparent singularity and ODE (\ref{su3-ode-even}) is invariant under $z\leftrightarrow -z$, the standard Frobenius' method implies that there is a basis of local solutions $(\eta_1, \eta_2, \eta_3)^T$ of the form
\[\eta_j(z)=z^{\rho_{0,j}}\sum_{l=0}^{+\infty}c_{j,l}z^{2l},\quad c_{j,0}\neq 0,\quad j=1,2,3\]
in a small neighborhood $B(0,\delta_0)=\{z| |z|<\delta_0\}$ of $0$, where we can take $\delta_0\in (0,\frac{1}{2(1+|\operatorname{Im}\tau|)})$. Since $n_{1,0}, n_{2,0}$ are both odd, we see from (\ref{localex0}) that $\rho_{0,j}-\rho_{0,1}$ are positive even integers for $j=2,3$, so $\eta_j(z)$ are all of the form
\begin{equation}\label{su3-fc5}\eta_j(z)
=z^{\rho_{0,1}}\sum_{l=0}^{+\infty}\tilde{c}_{j,l}z^{2l},\quad z\in B(0,\delta_0),\quad j=1,2,3.\end{equation}

On the other hand, we may take $\varepsilon_0<\frac{\delta_0}{4|1+\tau|}$ such that $B(q_0, |q_0|/2)\subset B(0,\delta_0)$. Then there is a matrix $P\in GL(3,\mathbb{C})$ such that
\begin{equation}\label{su3-fc6}
\vec{Y}=(y_1,y_2,y_3)^T=P(\eta_1,\eta_2,\eta_3)^T\quad\text{in }B(q_0,|q_0|/2).
\end{equation}

{\bf Step 2.} We consider simply-connected domains that have no intersections with the set $E_{\tau}[2]$ of singularities of ODE (\ref{su3-ode-even}):
\[\Omega:=\{a+b\tau |a\in \mathbb{R}, b\in (-\tfrac12,\tfrac12)\}\setminus((-\infty,-\tfrac12]\cup [0,+\infty)),\]
\[\Omega_+:=\{a+b\tau |a\in \mathbb{R}, b\in (0,\tfrac12)\},\]
\[\Omega_-:=\{a+b\tau |a\in \mathbb{R}, b\in (-\tfrac12,0)\}=-\Omega_+.\]
Since $B(q_0,|q_0|/2)\subset \Omega_{-}\subset\Omega$, by analytic continuation the local solutions $\vec{Y}=(y_1,y_2,y_3)^T$ in $B(q_0,|q_0|/2)$ can be extended to be \emph{single-valued holomorphic} functions (still denoted by $\vec{Y}=(y_1,y_2,y_3)^T$) in $\Omega$.
Then by (\ref{su3-fc5})-(\ref{su3-fc6}) we have
\begin{equation}\label{su3-fc7}
\vec{Y}=P(\eta_1,\eta_2,\eta_3)^T\quad\text{in }B(0,\delta_0)\cap\Omega=B(0,\delta_0)\setminus [0,+\infty).
\end{equation}

{\bf Step 3.} Since $\Omega_\pm +1=\Omega_\pm$,
$y_j(z+1)$ is well-defined and also a solution of ODE (\ref{su3-ode-even}) for $z\in\Omega_{\pm}$. So there are invertible matrices $N_{\pm}$ such that
\[\vec{Y}(z+1)=N_{\pm}\vec{Y}(z),\quad \forall\, z\in \Omega_{\pm}.\]
Recalling the definition of $\Omega$ and (\ref{su3-fc2}), it is easy to see
\begin{equation}\label{su3-fc31}N_+^{-1}N_-=M_0M_1=e^{-2\pi i (\gamma_{1,0}+\gamma_{1,1})}I_3.\end{equation}
On the other hand, we see from $B(q_0,|q_0|/2)\subset \Omega_{-}$, the definition of the fundamental cycle $\ell_1$ and $N_1=\rho(\ell_1)$ that $N_-=N_1$, so we conclude
\begin{equation}\label{su3-fc8}
\vec{Y}(z+1)=N_1\vec{Y}(z),\quad \forall\, z\in \Omega_{-},
\end{equation}
\begin{equation}\label{su3-fc9}
\vec{Y}(z+1)=e^{2\pi i (\gamma_{1,0}+\gamma_{1,1})}N_{1}\vec{Y}(z),\quad \forall\, z\in \Omega_{+}.
\end{equation}

{\bf Step 4.} Since $\Omega_+=-\Omega_-$, $y_j(-z)$ and $y_j(z)$ are also solutions of  ODE (\ref{su3-ode-even}) in $\Omega_{-}$, so there is a invertible matrix $\tilde{N}$ such that
\[\vec{Y}(-z)=\tilde{N}\vec{Y}(z)\quad \forall\, z\in \Omega_{-}.\]
On the other hand, since (\ref{su3-fc5}) and (\ref{su3-fc7}) together imply
\[\vec{Y}(-z)=\vec{Y}(e^{-\pi i}z)=e^{-\pi i \rho_{0,1}}\vec{Y}(z)\quad\text{for }z\in B(q_0,|q_0|/2),\]
we have $\tilde{N}=e^{-\pi i \rho_{0,1}} I_3$, namely
\begin{equation}\label{su3-fc10}\vec{Y}(-z)=e^{-\pi i \rho_{0,1}}\vec{Y}(z)\quad \forall\, z\in \Omega_{-}.\end{equation}

{\bf Step 5.} By (\ref{su3-fc8})-(\ref{su3-fc9}) and (\ref{su3-fc10}), we have for $z\in\Omega_-$ that
\begin{align*}
\vec{Y}(-(z+1))=e^{-\pi i \rho_{0,1}}\vec{Y}(z+1)=e^{-\pi i \rho_{0,1}}N_1\vec{Y}(z),
\end{align*}
and
\begin{align*}
\vec{Y}(-(z+1))&=\vec{Y}(-z-1)\\
&=e^{-2\pi i (\gamma_{1,0}+\gamma_{1,1})}N_{1}^{-1}\vec{Y}(-z)\\
&=e^{-2\pi i (\gamma_{1,0}+\gamma_{1,1})}e^{-\pi i \rho_{0,1}}N_{1}^{-1}\vec{Y}(z),
\end{align*}
so we obtain $N_1=e^{-2\pi i (\gamma_{1,0}+\gamma_{1,1})}N_{1}^{-1}$, i.e.
\[e^{-2\pi i (\gamma_{1,0}+\gamma_{1,1})}I_3=N_1^2=\left(\begin{smallmatrix}1 & &\\
&\varepsilon & \\
& & \varepsilon^2\end{smallmatrix}\right)^2=\left(\begin{smallmatrix}1 & &\\
&\varepsilon^2 & \\
& & \varepsilon^4\end{smallmatrix}\right),\]
clearly a contradiction because $\varepsilon=e^{-2\pi i \frac{2\mathcal{N}_1+\mathcal{N}_2}{3}}\neq \pm 1$.

Therefore, (\ref{TodaUV-e}) has no even solutions.
\end{proof}

\section{Proof of Theorem \ref{thm2}}

This section is devoted to the proof of Theorem \ref{thm2}. Here instead of counting the solutions of the form $(B,0,0)$ for the polynomial system (\ref{su3-fc42}), we study directly the condition on $B$ such that $0$ is an apparent singularity of (\ref{3ode-even}). Again we apply the standard Frobenius' method.

Recall that since at least one of $n_1,n_2$ is even, the integer $N_e$ is well defined by (\ref{su3-fc20}).

\begin{lemma}\label{lemma-su3-2} Suppose at least one of $n_1,n_2$ is even. Then there is a monic polynomial $P_{N_e}(B)\in\mathbb{Q}[g_2,g_3][B]$ with degree $N_e$ in $B$ and also homogeneous weight $N_e$ such that $0$ is an apparent singularity of (\ref{3ode-even}) if and only if $P_{N_e}(B)=0$.
Here the weights of $B, g_2, g_3$ are $1, 2, 3$ respectively.

\end{lemma}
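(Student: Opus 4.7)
The starting point is Lemma~\ref{lemma-su3-3}, which reduces the apparent condition at $0$ to the simultaneous vanishing of the three Frobenius obstructions $P_{0,1},P_{0,2},P_{0,3}\in\mathbb{Q}[g_2,g_3][B,D_0,D]$. The plan is to substitute $D_0=D=0$ and exploit a parity decoupling in the Frobenius recursion to collapse this triple of conditions into a single polynomial equation in $B$.

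After setting $D_0=D=0$, the recursion (\ref{rec-app}) simplifies to
\[
\phi_j c_j=(j+\rho_1-2)Bc_{j-2}+\sum_{i=4}^{j}\bigl[(j+\rho_1-i)\alpha-(i-2)\beta\bigr]b_ic_{j-i}.
\]
Since $b_i=0$ for all odd $i$ (as $\wp$ is even), this recursion couples $c_j$ only to $c_{j-2},c_{j-4},\ldots$ of matching parity. Thus the even-indexed and odd-indexed subsequences evolve independently, and an obstruction $P_{0,i}(B,0,0)=0$ is automatic whenever the obstruction index has the opposite parity of the relevant starting index.

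Under the hypothesis that at least one of $n_1,n_2$ is even, exactly one of the positive critical indices $n_1+1,\ n_1+n_2+2$ is even, namely $2N_e$. A case analysis then proceeds as follows. If $n_1$ is odd and $n_2$ is even, then $n_1+1=2N_e$ and only $P_{0,1}(B,0,0)$ survives (from the even ladder starting at $c_0$). If both $n_1,n_2$ are even, then $n_1+n_2+2=2N_e$ and only $P_{0,3}(B,0,0)$ survives (again from the even ladder). If $n_1$ is even and $n_2$ is odd, then both positive critical indices are odd, the even ladder from $c_0$ has no obstruction, and the odd ladder starting at the free $c_{n_1+1}$ produces only $P_{0,2}(B,0,0)=0$ at $j=n_1+n_2+2$ after $N_e$ steps. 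In every case the apparent condition reduces to a single polynomial equation $\tilde P_{N_e}(B)=0$.

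To finish, I would track the leading $B^{N_e}$ term of $\tilde P_{N_e}$, which arises by iterating the pure $Bc_{j-2}$ step $N_e$ times along the relevant parity ladder. Its coefficient is a product of factors $(j_k+\rho_1-2)$ divided by the intermediate values $\phi_{j_k}$; each $j_k+\rho_1-2\neq 0$ because $\rho_1=-(2n_1+n_2)/3\notin\mathbb{Z}$ (the noncritical hypothesis), and each intermediate $\phi_{j_k}\neq 0$ because $j_k$ lies in the correct parity class but is strictly below the surviving critical index. Hence the leading coefficient is a nonzero rational, and after dividing by it we obtain the desired monic polynomial $P_{N_e}(B)$ of degree $N_e$ in $B$. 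The weight statement follows because every monomial of $\tilde P_{N_e}$ has weight $2N_e$ under the Lemma~\ref{lemma-su3-33} convention $(B,g_2,g_3)\mapsto(2,4,6)$, so dividing all weights by two yields the claimed convention $(B,g_2,g_3)\mapsto(1,2,3)$ with weight $N_e$. The main technical nuisance is the case-by-case bookkeeping that pins down the surviving obstruction in each parity pattern and verifies that the other two are automatically satisfied by parity; once this is done, the nonvanishing of the leading coefficient is essentially automatic from non-criticality.
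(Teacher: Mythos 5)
Your argument is correct, but it takes a genuinely different route from the paper's. The paper exploits the evenness of (\ref{3ode-even}) globally: it descends the equation to $\mathbb{P}^1$ via $x=\wp(z)$, obtaining a third-order operator $\tilde L$ in $x$ whose exponents at $x=\infty$ are $\rho_j/2$; since exactly one of the half-exponent differences $\tfrac{n_1+1}{2},\tfrac{n_2+1}{2},\tfrac{n_1+n_2+2}{2}$ is an integer (namely $N_e$), the Frobenius analysis at $\infty$ produces a \emph{single} obstruction polynomial from the outset, the half-integer differences automatically excluding logarithms for the other two pairs. You instead stay on the torus and observe that, after setting $D_0=D=0$, the recursion (\ref{rec-app}) preserves the parity of the index (because $b_i=0$ for odd $i$), so the three obstructions of Lemma \ref{lemma-su3-3} decouple along parity ladders and two of them vanish identically; this is the infinitesimal shadow of the same $z\mapsto-z$ symmetry that the paper uses globally. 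Both proofs then read off the leading coefficient as a product of factors $(j+\rho_1-2)/\phi_j$, nonzero because $\rho_1\notin\mathbb{Z}$ (non-criticality) and because the intermediate indices avoid the zeros of $\phi_j$, and both get the weight count from the homogeneity of the recursion. One small slip in your write-up: the sentence asserting that exactly one of the critical indices $n_1+1$, $n_1+n_2+2$ is even is false when $n_1$ is even and $n_2$ is odd (both are then odd, and $2N_e=n_2+1$ is the gap between them); your subsequent case analysis treats this case correctly anyway, so nothing is lost. The paper's change of variables buys a structurally cleaner ``one obstruction only'' statement (and sets up the recursion (\ref{su3-fc22}) used later in Lemma \ref{lemma-real}); your version avoids the change of variables at the cost of the parity bookkeeping needed to show the other two obstructions are identically zero.
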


\begin{proof} Since ODE (\ref{3ode-even}) is invariant with respect to $z\leftrightarrow-z$, it is more convenient for us to
descend (\ref{3ode-even}) to $\mathbb{P}^1$ under the double cover $\wp: E_{\tau}\to \mathbb{P}^1$. Let $x=\wp(z)$, $\tilde{y}(x)=y(z)$ and denote
\[p(x):=4x^3-g_2x-g_3=\wp'(z)^2.\]
Then
\[\frac{dy}{dz}=\wp'(z)\frac{d\tilde{y}}{dx},\qquad
\frac{d^2y}{dz^2}=p(x)\frac{d^2\tilde{y}}{dx^2}+\frac12p'(x)\frac{d\tilde{y}}{dx},\]
\begin{align*}
\frac{d^3y}{dz^3}=\wp'(z)\left[p(x)\frac{d^3\tilde{y}}{dx^3}
+\frac32p'(x)\frac{d^2\tilde{y}}{dx^2}+12x\frac{d\tilde{y}}{dx}\right],
\end{align*}
so
\begin{equation}\label{ndd-ee1}y'''-(\alpha\wp(z)+B)y'+\beta\wp'(z)y=\wp'(z)\tilde{L}\tilde{y},\end{equation}
where
\[\tilde{L}:=p(x)D^3+3(6x^2-\tfrac{g_2}{2})D^2+[(12-\alpha)x-B]D+\beta,\;\, D:=d/dx.\]
Namely $y(z)$ solves (\ref{3ode-even}) if and only if
$\tilde{L}\tilde{y}(x)=0$.

Recalling (\ref{localex0-1}),
the local exponents of $\tilde{L}\tilde{y}(x)=0$ at $x=\infty$ are $\frac{\rho_j}{2}$'s:
\begin{equation}\label{localex0-2}
\frac{\rho_{1}}{2}=-\frac{\gamma_{1}}{2}< \frac{\rho_{2}}{2}=-\frac{\gamma_{1}}{2}+\frac{n_{1}+1}{2}< \frac{\rho_{3}}{2}=-\frac{\gamma_{1}}{2}+\frac{n_{1}+n_2+2}{2}.
\end{equation}
Clearly $0$ is an apparent singularity of (\ref{3ode-even}) if and only if $\infty$ is an apparent singularity of $\tilde{L}\tilde{y}(x)=0$.
Moreover, the standard Frobenius' method says that $\infty$ is an apparent singularity if and only if $\tilde{L}\tilde{y}(x)=0$ has solutions of the form
\begin{equation}\label{su3-fc21}\tilde{y}_k(x)=x^{-\frac{\rho_k}{2}}\sum_{j=0}^{+\infty}C_j x^{-j},\quad C_0=1\end{equation}
for those $k\in\{1,2\}$ satisfying that $\frac{\rho_i}{2}-\frac{\rho_k}{2}\in\mathbb{N}$ for some $i>k$.

Inserting (\ref{su3-fc21}) into $\tilde{L}\tilde{y}_k(x)=0$ we easily obtain
\[0=\tilde{L}\tilde{y}_k(x)=\sum_{j=0}^{+\infty}\Phi_jx^{-j-\frac{\rho_k}{2}},\]
where
\begin{align*}
\Phi_j:=&\phi_jC_j-B(-j-\tfrac{\rho_k}{2}+1)C_{j-1}\\
&-
g_2(-j-\tfrac{\rho_k}{2}+2)(-j-\tfrac{\rho_k}{2}+\tfrac32)(-j-\tfrac{\rho_k}{2}+1)C_{j-2}\\
&-g_3(-j-\tfrac{\rho_k}{2}+3)(-j-\tfrac{\rho_k}{2}+2)(-j-\tfrac{\rho_k}{2}+1)C_{j-3},
\end{align*}
$C_{-3}=C_{-2}=C_{-1}:=0$ and
\begin{align}\label{su3-fc33}\phi_j:=&4(-j-\tfrac{\rho_k}{2})(-j-\tfrac{\rho_k}{2}-1)(-j-\tfrac{\rho_k}{2}-2)\\
&+18(-j-\tfrac{\rho_k}{2})(-j-\tfrac{\rho_k}{2}-1)+(12-\alpha)(-j-\tfrac{\rho_k}{2})+\beta\nonumber\\
=&-4\prod_{i=1}^3(j+\tfrac{\rho_k}{2}-\tfrac{\rho_i}{2}).\nonumber
\end{align}
Therefore, $\tilde{L}\tilde{y}_k(x)=0$ if and only if
\begin{align}\label{su3-fc22}
\phi_jC_j=&B(-j-\tfrac{\rho_k}{2}+1)C_{j-1}\\
&+
g_2(-j-\tfrac{\rho_k}{2}+2)(-j-\tfrac{\rho_k}{2}+\tfrac32)(-j-\tfrac{\rho_k}{2}+1)C_{j-2}\nonumber\\
&+g_3(-j-\tfrac{\rho_k}{2}+3)(-j-\tfrac{\rho_k}{2}+2)(-j-\tfrac{\rho_k}{2}+1)C_{j-3},\;\forall j\geq 0.\nonumber
\end{align}
Note that $-j-\tfrac{\rho_k}{2}+1\neq 0$ for any $j\geq 0$ because $\tfrac{\rho_k}{2}\notin \mathbb{Z}$.

{\bf Case 1.} $n_1$ is odd and $n_2$ is even.

Then $N_e=\frac{n_1+1}{2}$ and $\frac{\rho_i}{2}-\frac{\rho_k}{2}\in\mathbb{N}$ if and only if $(k,i)=(1,2)$. Let $k=1$ in (\ref{su3-fc33})-(\ref{su3-fc22}), we have
\begin{equation}\label{su3-fc23}
\phi_j=-4j(j-\tfrac{n_1+1}{2})(j-\tfrac{n_1+n_2+2}{2})=-4j(j-N_{e})(j-\tfrac{n_1+n_2+2}{2}),
\end{equation}
so $\phi_j=0$ with $j\geq 0$ if and only if $j\in \{0,N_e\}$.

Clearly (\ref{su3-fc22}) with $j=0$ holds automatically. For $1\leq j\leq N_e-1$, by (\ref{su3-fc22}) and the induction argument, $C_j$ can be uniquely solved as $C_j=C_j(B)\in \mathbb{Q}[g_2,g_3][B]$ with degree $j$ in $B$ and homogeous weight $j$. Consequently, the RHS of (\ref{su3-fc22}) with $j=N_e$ is a polynomial in $\mathbb{Q}[g_2,g_3][B]$ with degree $N_e$ and homogenous weight $N_e$. Define $P_{N_e}(B)$ to be the corresponding monic polynomial. Then the standard Frobenius theory shows that  $\tilde{L}\tilde{y}_1=0$ has a local solution $\tilde{y}_1(z)$ of the form (\ref{su3-fc21}) if and only if  $P_{N_e}(B)=0$. This proves that $\infty$ is an apparent singularity if and only if $P_{N_e}(B)=0$.

{\bf Case 2.} $n_1$ is even and $n_2$ is odd.

Then $N_e=\frac{n_2+1}{2}$ and $\frac{\rho_i}{2}-\frac{\rho_k}{2}\in\mathbb{N}$ if and only if $(k,i)=(2,3)$. Let $k=2$ in (\ref{su3-fc33})-(\ref{su3-fc22}), we have
\[\phi_j=-4j(j+\tfrac{n_1+1}{2})(j-\tfrac{n_2+1}{2})=-4j(j+\tfrac{n_1+1}{2})(j-N_{e}),\]
so $\phi_j=0$ with $j\geq 0$ if and only if $j\in \{0,N_e\}$. The rest proof is the same as Case 1.

{\bf Case 3.} $n_1$ and $n_2$ are both even.

Then $N_e=\frac{n_1+n_2+2}{2}$ and $\frac{\rho_i}{2}-\frac{\rho_k}{2}\in\mathbb{N}$ if and only if $(k,i)=(1,3)$. Let $k=1$ in (\ref{su3-fc33})-(\ref{su3-fc22}), we have
\[\phi_j=-4j(j-\tfrac{n_1+1}{2})(j-\tfrac{n_1+n_2+2}{2})=-4j(j-\tfrac{n_1+1}{2})(j-N_{e}),\]
so $\phi_j=0$ with $j\geq 0$ if and only if $j\in \{0,N_e\}$. Again the rest proof is the same as Case 1.
\end{proof}

\begin{lemma}\label{lemma-real}
If one of the following holds:
\begin{itemize}
\item[(1)] $n_1$ is odd, i.e. $N_e=\frac{n_1+1}{2}$,
\item[(2)] $n_2$ is odd (i.e. $N_e=\frac{n_2+1}{2}$) and $n_2-n_1\in \{1,5\}$,
\item[(3)] $n_1$ is even and $n_2=n_1+2$, i.e. $N_e=\frac{n_1+n_2+2}{2}=n_1+2$,
\end{itemize}
then the polynomial $P_{N_e}(B)$ in Lemma \ref{lemma-su3-2} has $N_e$ distinct roots expect for finitely many $\tau$'s modulo $SL(2,\mathbb{Z})$ action.
\end{lemma}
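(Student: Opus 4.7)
My plan is to reformulate the claim as the non-vanishing of a single polynomial in $(g_2,g_3)$, and then verify non-vanishing by specializing to a convenient point. By Lemma~\ref{lemma-su3-2}, $P_{N_e}(B)\in\mathbb{Q}[g_2,g_3][B]$ is monic of degree $N_e$ in $B$, so its discriminant
\[
\Delta(g_2,g_3):=\operatorname{disc}_B P_{N_e}(B)\in\mathbb{Q}[g_2,g_3]
\]
vanishes at a given $\tau$ precisely when $P_{N_e}(B)$ fails to have $N_e$ distinct roots. Since $g_2(\tau)$ and $g_3(\tau)$ are algebraically independent (the graded ring of modular forms for $SL(2,\mathbb{Z})$ is freely generated by $g_2$ and $g_3$), the function $\tau\mapsto\Delta(g_2(\tau),g_3(\tau))$ vanishes identically iff $\Delta\equiv 0$ as a polynomial in two variables; otherwise, being a non-zero modular form of positive weight, it has only finitely many zeros on the compactified modular curve $\mathbb{H}/SL(2,\mathbb{Z})\cup\{\infty\}$. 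Hence the lemma reduces to exhibiting, in each case, one pair $(g_2^*,g_3^*)\in\mathbb{C}^2$ with $\Delta(g_2^*,g_3^*)\neq 0$.

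For the specialization I would set $g_3=0$, which collapses the three-term recursion (\ref{su3-fc22}) to a two-term recursion. By the weighted homogeneity ($\operatorname{wt}(B)=1$, $\operatorname{wt}(g_2)=2$, $\operatorname{wt}(P_{N_e})=N_e$), the restriction takes the form $P_{N_e}(B)|_{g_3=0}=\sum_{s\geq 0}c_s\,g_2^s\,B^{N_e-2s}$ with rational constants $c_s$ determined inductively from (\ref{su3-fc22}). It then suffices to find one value of $g_2$ for which this one-variable polynomial has $N_e$ distinct roots in $B$. In Case~(3), where $n_2=n_1+2$ and $N_e=n_1+2$, I would induct on $n_1$ starting from $(n_1,n_2)=(0,2)$, where Theorem~\ref{thm4}(2) already shows $P_2(B)=B^2-g_2/3$ has distinct roots for $g_2\neq 0$. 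Cases~(1) and~(2) should proceed analogously: the numerical restrictions ($n_1$ odd giving $N_e=(n_1+1)/2$, or $n_2$ odd with $n_2-n_1\in\{1,5\}$ giving $N_e=(n_2+1)/2$) are exactly what force the coefficients $c_s$ to match those of a classical family for which the distinctness of roots in $B$ can be verified directly.

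The main obstacle is executing the explicit coefficient-matching step in Cases~(1) and~(2). Tracking the $c_s$ requires unwinding the products of the factors $(-j-\rho_k/2+r)$, $r=1,2$, appearing in (\ref{su3-fc22}), and these combinatorial products grow quickly with $n_1,n_2$. The constraint $n_2-n_1\in\{1,5\}$ in Case~(2), and the parity condition in Case~(1), are precisely the ranges in which the leading and subleading $g_2$-coefficients of $P_{N_e}(B)|_{g_3=0}$ are controlled enough to check non-vanishing of the discriminant directly. Outside these ranges the combinatorics becomes opaque, and a different technique, for example a monodromy-theoretic construction of $N_e$ explicit distinct even solutions at a symmetric torus such as $\tau=i$, would likely be needed. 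This is presumably why Lemma~\ref{lemma-real} isolates these three specific families rather than covering all $(n_1,n_2)$ with at least one even.
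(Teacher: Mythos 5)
Your reduction framework is sound and coincides with the paper's Step 2: the discriminant $\operatorname{disc}_B P_{N_e}(B)$ is a (weighted-homogeneous) polynomial in $g_2,g_3$, hence a modular form, so once it is shown to be not identically zero it has only finitely many zeros modulo $SL(2,\mathbb{Z})$, and it suffices to exhibit a single $\tau$ (equivalently, by algebraic independence of $g_2,g_3$, a single point with $g_3=0$) where $P_{N_e}$ has $N_e$ distinct roots. You also correctly identify $g_3=0$, i.e.\ $\tau=i$, as the right specialization. But the core of the lemma is precisely the verification at that point, and there your proposal has a genuine gap: the ``coefficient-matching with a classical family'' and the induction on $n_1$ in Case (3) are not arguments, and you acknowledge as much. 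There is no evident relation between $P_{N_e}$ for $(n_1,n_1+2)$ and for $(n_1+2,n_1+4)$ that would drive such an induction, and no classical family is identified.

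What the paper actually does at $\tau=i$ is a real-rootedness/interlacing argument of Sturm type, and this is the missing idea. With $g_3=0$ and $g_2>0$, the recursion (\ref{su3-fc22}) becomes a three-term relation linking $C_j$, $C_{j-1}$, $C_{j-2}$, and under conditions (1)--(3) one checks the sign facts $\phi_j<0$ for $1\le j\le N_e-1$, $-j-\tfrac{\rho_k}{2}+1>0$, and positivity of the $g_2$-coefficient on the relevant range. These give: \textbf{(P1)} the leading term of $C_j(B)$ is $(-1)^jB^j$ up to a positive constant, and \textbf{(P2)} if $C_{j-1}(B)=0$ and $C_{j-2}(B)\neq0$ at a real $B$ then $C_j(B)C_{j-2}(B)<0$. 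An induction then shows each $C_j$ has $j$ real roots strictly interlacing those of $C_{j-1}$, so $P_{N_e}$ has $N_e$ distinct (real) roots. This sign structure is also exactly what explains the restriction to cases (1)--(3): as the paper notes afterwards, for e.g.\ $n_2$ odd, $n_1$ even and $n_2-n_1\ge 7$ one gets $C_2(B)=d_1B^2+d_2g_2$ with $d_1d_2>0$, which has no real roots at $\tau=i$, so the interlacing induction breaks down. Without supplying this (or an equivalent) mechanism, your proposal does not establish the lemma.
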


\begin{proof}

{\bf Step 1.} We consider the special case $\tau=i=\sqrt{-1}$ and prove that $P_{N_e}(B)$ has $N_e$ real distinct roots.
We only prove this assertion for the condition (1). The cases for conditions (2)-(3) can be proved similarly and we leave the details to the interested reader.

For $\tau=i$, it is well known that $g_3=0$ and $g_2>0$.
Then under condition (1), we see from  (\ref{su3-fc22})-(\ref{su3-fc23}) and $\rho_1=-\frac{2n_1+n_2}{3}$ that
\begin{align}\label{su3-fc22-0}
\phi_jC_j=&B(-j+\tfrac{2n_1+n_2}{6}+1)C_{j-1}+
g_2(-j+\tfrac{2n_1+n_2}{6}+2)\\
&\cdot(-j+\tfrac{2n_1+n_2}{6}+\tfrac32)(-j+\tfrac{2n_1+n_2}{6}+1)C_{j-2},\;\forall j\geq 0,\nonumber
\end{align}
\begin{equation}\label{su3-fc24}
\phi_j=-4j(j-N_{e})(j-\tfrac{n_1+n_2+2}{2}).
\end{equation}
Since $N_e=\frac{n_1+1}{2}$ and $n_1<n_2$, we have
\[\phi_j<0\quad \forall 1\leq j\leq N_e-1,\]
\[-j+\tfrac{2n_1+n_2}{6}+1>0\quad \forall 1\leq j\leq N_e.\]
Recall that $C_0=1$, $C_{-3}=C_{-2}=C_{-1}=0$, and $C_j=C_j(B)$ are polynomials in $\mathbb{Q}[g_2][B]\subset \mathbb{R}[B]$ with degree $j$ for $1\leq j\leq N_e-1$. For convenience we also denote by $-C_{N_e}(B)$ to be the RHS of (\ref{su3-fc22-0}) with $j=N_e$, i.e.
\begin{align*}
-C_{N_e}(B):=&B(-N_e+\tfrac{2n_1+n_2}{6}+1)C_{N_e-1}+
g_2(-N_e+\tfrac{2n_1+n_2}{6}+2)\\
&\cdot(-N_e+\tfrac{2n_1+n_2}{6}+\tfrac32)(-N_e+\tfrac{2n_1+n_2}{6}+1)C_{N_e-2}.
\end{align*}
Then $C_{N_e}(B)=c P_{N_e}(B)$ for some $c\neq 0$.

In view of the above argument, it is easy to see that the following properties hold for $1\leq j\leq N_e$:

{\bf (P1)} Up to a positive constant, the leading term of $C_{j}(B)$ is $(-1)^{j}B^{j}$.

{\bf (P2)} If $C_{j-1}(B)=0$ and $C_{j-2} (B)\neq 0$ for $B\in \mathbb{R}$, then
$C_{j} (B)C_{j-2} (B)<0$.

Then by an induction argument, it is easy to prove that for $1\leq j\leq N_e$, $C_j(B)$ has $j$ real distinct roots, denoted by $r_1^j<\cdots<r_{j}^j$, such that
\begin{equation}\label{root1}
r_1^j<r_1^{j-1}<r_2^j<\cdots<r_{j-1}^{j}<r_{j-1}^{j-1}<r_{j}^j.
\end{equation}
See e.g. \cite{CKLT}. We give the details here for completeness.

The case $j=1$ is trivial because $\deg C_{1}(B)=1$. Let $2\leq m\leq N_e-1$ and
assume that the statement is true for any $j\leq m-1$. We prove it for $j=m$. From the assumption of the
induction, we have
\begin{equation}\label{s-k}
r_{1}^{m-1}<r_{1}^{m-2}<r_{2}^{m-1}<\cdots
<r_{m-2}^{m-1}<r_{m-2}^{m-2}<r_{m-1}^{m-1}.
\end{equation}
Recall {\bf (P1)} that
\begin{equation}\label{ck-1}
\lim_{B\rightarrow-\infty} C_{m-2}(B) =+\infty, \; \lim_{B \rightarrow
+\infty} C_{m-2}(B) = (-1)^{m-2}\infty.
\end{equation}
Since $r_{j}^{m-2}$, $1\le j\le m-2$, are all the roots of  $C_{m-2}(B)$, it follows from (\ref{s-k}) and (\ref{ck-1}) that
\begin{equation}\label{c-k-1-k}
C_{m-2}(r_j^{m-1})\sim (-1)^{j-1},\quad \forall j\in [1,m-1].
\end{equation}
Here $c\sim (-1)^j$ means $c=(-1)^j\tilde{c}$ for some $\tilde{c}>0$. Then we see from {\bf (P2)} that
\[C_{m}(r_j^{m-1})\sim (-1)^{j},\quad \forall j\in [1,m-1].\]
On the other hand, {\bf (P1)} implies
\[\lim_{B \rightarrow-\infty} C_{m}(B) =+\infty, \; \lim_{B \rightarrow
+\infty} C_{m}(B) = (-1)^{m} \infty.\]
From here,
it follows from the intermediate value theorem that the polynomial
$C_{m}(B)$ has $m$ real distinct roots $r_{j}^{m} $ $(1\leq j\leq
m)$ such that \begin{equation}\label{sk1}r_{1}^{m}<r_{1}^{m-1}<r_{2}^{m}
<\dots<r_{m-1}^{m}<r_{m-1}^{m-1}<r_{m}^{m}.\end{equation}
This proves (\ref{root1}) for all $1\leq j\leq N_e$.
In particular, $P_{N_e}(B)$ has $N_e$ real distinct roots because $C_{N_e}(B)=c P_{N_e}(B)$ for some $c\neq 0$.

{\bf Step 2.} We complete the proof.

Recall that $g_2=g_2(\tau), g_3=g_3(\tau)$ are modular forms of weights $4, 6$ respectively, with respect to $SL(2,\mathbb{Z})$.
By Lemma \ref{lemma-su3-2} and Step 1, the discriminant of $P_{N_e}(B)$ is a nonzero modular form with respect to $SL(2,\mathbb{Z})$ and so has only finitely many zeros $\tau$'s modulo $SL(2,\mathbb{Z})$ action. This implies that $P_{N_e}(B)$ has $N_e$ distinct roots expect for finitely many $\tau$'s modulo $SL(2,\mathbb{Z})$ action.
\end{proof}

\begin{remark}\label{rmk}
We believe that the assertion of Lemma \ref{lemma-real} should hold without the conditions (1)-(3). For other cases than (1)-(3), the above induction argument via the recursive formula does not apply. For example, let us consider the case that $n_2$ is odd, $n_1$ is even and $n_2-n_1\geq 7$. Then for $\tau=i$,  we will obtain from the recursive relation  (\ref{su3-fc22}) that $C_2(B)=d_1B^2+d_2g_2$ with some $d_1,d_2\in\mathbb{Q}\setminus\{0\}$ satisfying $d_1d_2>0$, namely $C_2(B)$ has no real roots and so the induction argument fails.  Thus different ideas are needed to settle this problem, which remains open.
\end{remark}

\begin{proof}[Proof of Theorem \ref{thm2}]
Clearly by Theorem \ref{thm3} and Lemma \ref{lemma-su3-2}, we see that the number of even solutions of the Toda system (\ref{TodaUV}) equals to the number of distinct roots of the polynomial $P_{N_e}(B)$. Therefore, Theorem \ref{thm2} follows directly from Lemma \ref{lemma-real}.
\end{proof}

\medskip
\noindent{\bf Acknowledgements} The authors wish to thank the referees very
much for valuable comments. In particular, one referee suggests proposing Conjecture \ref{conj-11}. The research of Z. Chen was supported by NSFC (No. 12071240).

\end{document}